\documentclass[a4paper,11pt,reqno]{amsart}
\usepackage[T1]{fontenc}
\usepackage[utf8]{inputenc}
\usepackage{lmodern}
\usepackage{amsmath, amsthm, amssymb,amscd, mathrsfs, amsfonts, mathtools}
\usepackage{hyperref}
\usepackage{euler}
\usepackage{times}
\usepackage[all]{xy}
\usepackage{todonotes}
\usepackage{xcolor}
\usepackage{tikz}
\usepackage{tikz-cd}
\usepackage{stmaryrd}

\usepackage[lite]{amsrefs}

\renewcommand{\PrintDOI}[1]{\href{http://dx.doi.org/\detokenize{#1}}{doi: \detokenize{#1}}%
  \IfEmptyBibField{pages}{, (to appear in print)}{}}

\def\commutatif{\ar@{}[rd]|{\circlearrowleft}}

\newcommand{\eq}[1][r]
   {\ar@<-3pt>@{-}[#1]
    \ar@<-1pt>@{}[#1]|<{}="gauche"
    \ar@<+0pt>@{}[#1]|-{}="milieu"
    \ar@<+1pt>@{}[#1]|>{}="droite"
    \ar@/^2pt/@{-}"gauche";"milieu"
    \ar@/_2pt/@{-}"milieu";"droite"}

\def\dar[#1]{\ar@<2pt>[#1]\ar@<-2pt>[#1]}
  \entrymodifiers={!!<0pt,0.7ex>+} 

\newcommand{\bigon}[4][r]{
    \ar@/^1pc/[#1]^{#2}_*=<0.3pt>{}="HAUT"
    \ar@/_1pc/[#1]_{#3}^*=<0.3pt>{}="BAS"
    \ar@{=>} "HAUT";"BAS" ^{#4}
  }

\newcommand{\bigons}[6][r]{  
    \ar@/^2pc/[#1]^{#2}_*=<0.3pt>{}="HAUT"
    \ar@{}    [#1]     ^*=<0.3pt>{}="MILIEUHAUT"
                       _*=<0.3pt>{}="MILIEUBAS"
    \ar[#1]_(0.3){#3}                  
    \ar@/_2pc/[#1]_{#4}^*=<0.3pt>{}="BAS"
    \ar@{=>} "HAUT";"MILIEUHAUT" ^{#5}
    \ar@{=>} "MILIEUBAS";"BAS" ^{#6}
  }


\newtheorem{thm}{Theorem}[section]
\newtheorem{pro}[thm]{Proposition}
\newtheorem{lem}[thm]{Lemma}
\newtheorem{cor}[thm]{Corollary}

\theoremstyle{definition}
\newtheorem{df}[thm]{Definition}

\newtheorem*{theorem*}{Theorem}
\newtheorem*{pro*}{Proposition}
\theoremstyle{remark}
\newtheorem{rmk}[thm]{Remark}

\newtheorem{ex}[thm]{Example}

\allowdisplaybreaks


\newcommand\Hom{\operatorname{Hom}}



\let\bf\mathbf



\def\Aut{\operatorname{Aut}}




\title{On Schur Algebras and Derivations of Free Lie Algebras}

\author{Frederick Cohen}

\address{Department of Mathematics, University of Rochester, RochesterHylan Bldg, 140 Trustee Rd, Rochester, NY 14627, U.S.A.}
\email{cohen@rochester.edu}

\author{Mohamed Elhamdadi} 
\address{Department of Mathematics, 
University of South Florida, Tampa, FL 33620, U.S.A.} 
\email{emohamed@math.usf.edu} 

\author{Tao Jin} 
\address{PBC School of Finance, Tsinghua University, 43 Chengfu Road, Haidian District, Beijing 100083, China} 
\email{jint@pbcsf.tsinghua.edu.cn} 

\author{Minghui Liu} 
\address{Mathematics and Science Department, Florida College, Temple Terrace, FL 33617, U.S.A.} 
\email{LiuM@floridacollege.edu}

\begin{document}

\maketitle

\begin{abstract}
We investigate the action of Schur algebra on the Lie algebras of derivations of free Lie algebras and operad structures constructed from it. We also show that the Lie algebra of derivations is generated by quadratic derivations together
with the action of the Schur operad. Applications to certain subgroups of the automorphism group of a finitely generated free group are given as well. 
\end{abstract}

\tableofcontents













\section{Introduction}
Let $F_n$ be the free group of rank $n$ and ${IA}_n$ be the subgroup of $\mathrm{Aut}(F_n)$ which induces the identity automorphism on $\mathbb{Z}^n$, the abelianization of $F_n$. These groups have been studied by Nielsen in 1917 \cite{Nielsen}, but their structures remained not well-understood. In 1997, Krsti\'{c} and McCool \cite{KM} proved that $IA_3$ is not finitely presentable and $H^2(IA_3)$ is not finitely generated. Some properties of the second rational
cohomology of $IA_n$ were studied by A. Pettet \cite{P} in 2005. In 2007, M. Bestvina, K.-U. Bux and D. Margalit proved some properties about a certain subgroup of $\mathrm{Out}\left(F_n\right)$, the outer automorphism
group of $F_n$, and its homology \cite{BBM}. In 2011, S. Galatius \cite{G} proved that there is a homology equivalence
\[\mathbb{Z}\times B\left(\underset{{n\to\infty}}{\mathrm{colim}}\,\mathrm{Aut}(F_n)\right)\to\underset{{n\to\infty}}{\mathrm{colim}}\,\Omega^n S^n,\]
where the right hand side is the infinite loop space, and $B$ stands for the classifying space.

In order to study the groups $IA_n$ and $\mathrm{Aut}(F_n)$, the Johnson filtration 
\[\mathrm{Aut}(F_n)=\mathcal{A}_n(0) \supset \mathcal{A}_n(1) \supset\cdots \]

was first introduced by S. Andreadakis in 1965 \cite{A}. The Johnson Lie algebra $\mathrm{gr}_\ast^J(IA_n)$ is the Lie algebra structure associated with the Johnson filtration of the group $IA_n$. The group $IA_n$ is related to $\mathrm{Der}_\ast^\triangle (L_{\mathbb{Z}}(X_n))$, the Lie algebra of derivations of
the free Lie algebra $L_{\mathbb{Z}}(X_n)$, via the so-called Johnson homomorphism, which was
studied by N. Kawazumi \cite{K}, where $X_n=\{1,\ldots,n\}$. There is a second descending filtration $\mathcal{A}_n(k)'$ of $\mathrm{Aut}(F(X_n))$ induced by the lower central series of $IA_n$. S. Andreadakis conjectured that $\mathcal{A}_n(k) =
\mathcal{A}_n(k)'$ for all $k$ and $n$; which is shown to be false in general by L. Bartholdi in \cite{B0,B1}, though some special cases were studied and proved by S. Andreadakis \cite{A} and some other authors such as K. Satoh \cite{Sa}.

In this article which is based on the thesis of T. Jin \cite{Jin}, we introduce a new additional structure for the Lie algebra of derivations of a free Lie algebra, which arises from actions of a classical algebra known as the Schur algebra,
which was first studied by I. Schur in the early twentieth century to explore the
representation theory of the symmetric group $\Sigma_r$ and the general linear group
$\mathrm{GL}(m,\mathbb{C})$ \cite{Sch}. 
We also show that the Lie
algebra of derivations is generated by quadratic derivations together
with the action of the Schur operad. Applications to certain subgroups of the automorphism group of a finitely generated free group are given as well. The first main result of this paper is the following proposition:
\begin{pro*}
 The Schur algebras assemble to give an operad, denoted $\mathit{Schur}$.
\end{pro*}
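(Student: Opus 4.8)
The plan is to write down the operadic composition explicitly and then reduce the operad axioms to associativity of matrix multiplication together with the bialgebra identities of a coordinate ring. Throughout, write $\mathit{Schur}(r)$ for the $r$-th Schur algebra $S(n,r)$, realized as the commutant $\End_{\Sigma_r}(V^{\otimes r})$ with $V$ of rank $n$ and $\Sigma_r$ permuting tensor factors; under $\End(V^{\otimes r})\cong\End(V)^{\otimes r}$ this is the divided power $\Gamma^r(\End V)=(\End(V)^{\otimes r})^{\Sigma_r}$, and dually $\mathit{Schur}(r)=A(n,r)^{\ast}$, where $A(n)=\bigoplus_r A(n,r)=\mathcal O(M_n)$ is the coordinate ring of the monoid $M_n$ of $n\times n$ matrices — a graded object carrying the (co)multiplications coming from addition and from multiplication of matrices. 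I record that $\mathit{Schur}(0)=\mathbf k$, $\mathit{Schur}(1)=\End(V)=M_n(\mathbf k)$, and that $\Sigma_r$ acts trivially on $\mathit{Schur}(r)$ (the commutant centralizes $\Sigma_r$), so the object to be built is a symmetric operad whose symmetric-group actions are trivial and whose equivariance axioms will amount only to symmetry of the composition.

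For the structure maps: the unit is $\Id_V\in\mathit{Schur}(1)$, and for
\[
\gamma\colon\mathit{Schur}(k)\otimes\mathit{Schur}(j_1)\otimes\cdots\otimes\mathit{Schur}(j_k)\longrightarrow\mathit{Schur}(N),\qquad N=j_1+\cdots+j_k,
\]
I would use the block identification $V^{\otimes N}\cong V^{\otimes j_1}\otimes\cdots\otimes V^{\otimes j_k}$ and define $\gamma$ as the transpose of a cocomposition $A(n,N)\to A(n,k)\otimes A(n,j_1)\otimes\cdots\otimes A(n,j_k)$ assembled from the two (co)multiplications of $\mathcal O(M_n)$: the additive coproduct splits a degree-$N$ function into blocks of sizes $j_1,\dots,j_k$, while the multiplicative comultiplication records, in each block, the dependence on the remaining matrix slot. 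In $\End_\Sigma$-terms this interleaves the action of $\theta\in\mathit{Schur}(k)$ across the blocks with $\psi_1\otimes\cdots\otimes\psi_k$ and symmetrizes over $\Sigma_N$ to land in $\Gamma^N(\End V)=\mathit{Schur}(N)$; on the span of pure powers $A^{\otimes r}$ one obtains a symmetrization of the tensor whose $i$-th block carries the matrix product $AB_i$, which I would use for hands-on verifications. The first thing to check is that $\gamma$ is well defined with the stated target: the dual description exhibits it as a genuine linear map into $A(n,k)^{\ast}\otimes\cdots$, so compatibility with the relations defining $\Gamma^r(\End V)$ is automatic, and one checks separately that the output is a $\Sigma_N$-fixed tensor.

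It then remains to verify the operad axioms. Equivariance reduces, the actions being trivial, to invariance of $\gamma$ under permuting inputs with equal $j_i$ and re-indexing within a block, visible from the outer symmetrization. Unitality is the two identities $\gamma(\theta;\Id_V,\dots,\Id_V)=\theta$ and $\gamma(\Id_V;\psi)=\psi$, both coming from $\Id_V$ being the image of the unit/counit of $\mathcal O(M_n)$, i.e.\ from $A\Id_V=\Id_V A=A$ in $M_n$. The main obstacle is \emph{associativity}: I would pass to the equivalent partial-composition ($\circ_i$) form of the axioms, where only one block is refined at a time and the bookkeeping of $\Sigma$-coset representatives is lightest, and prove that the two iterated cocompositions $A(n,\bullet)\to A(n,\bullet)^{\otimes\bullet}$ agree; this follows from coassociativity of the matrix-multiplication comultiplication (which is associativity of matrix multiplication), associativity of the polynomial product, and their bialgebra compatibility in $\mathcal O(M_n)$. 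Concretely, expanding $\gamma(\gamma(\theta;\psi_\bullet);\chi_{\bullet\bullet})$ and $\gamma(\theta;\gamma(\psi_1;\chi_{1\bullet}),\dots,\gamma(\psi_k;\chi_{k\bullet}))$ on pure powers yields, on both sides, symmetrized tensors whose entries are triple matrix products $A\,B_i\,C_{i,l}$, and the content is that the two sums over block/coset data coincide by associativity in $M_n$. Recording $\mathit{Schur}(0)=\mathbf k$ and $\mathit{Schur}(1)=M_n(\mathbf k)$ then confirms the low-arity data, completing the proof that $(\mathit{Schur}(r))_{r\ge 0}$ with $\gamma$ and $\Id_V$ is an operad.
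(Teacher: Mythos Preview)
Your construction is not the one in the paper, and it carries a genuine gap.

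The paper's operad is far more elementary than what you propose. There the arity is \emph{shifted}: one sets $P(m)=\mathrm{Schur}(m-1,n,K)$, so that $P(1)=\mathrm{Schur}(0,n,K)=K$ and the operadic unit is the scalar $1\in K$, not $\Id_V\in\End(V)$. The composition is simply the iterated inter-degree product
\[
(\theta,\theta_1,\ldots,\theta_m)\longmapsto \theta\boxtimes\theta_1\boxtimes\cdots\boxtimes\theta_m,
\]
where $\boxtimes$ is the commutative, associative product already built from the transfer-like map $M_\lambda$ (summing $\sigma\circ(f_1\otimes\cdots\otimes f_k)\circ\sigma^{-1}$ over coset representatives of a Young subgroup). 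Since $\boxtimes$ was shown beforehand to be associative, commutative, distributive, and unital with unit $1\in K$, both operad axioms are immediate; the paper in fact declares the proof ``obvious.'' No matrix multiplication, no bialgebra structure on $\mathcal O(M_n)$, and no $\circ_i$ bookkeeping enters. The degree count also explains the shift: with $\theta\in\mathrm{Schur}(m-1)$ and $\theta_i\in\mathrm{Schur}(k_i-1)$ the product lands in degree $(m-1)+\sum(k_i-1)=\bigl(\sum k_i\bigr)-1$, i.e.\ in $P(k_1+\cdots+k_m)$.

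Your proposed $\gamma$, by contrast, does not appear to be well defined as a $K$-linear (or multilinear) map with the stated source. Take the simplest case $k=1$, $j_1=j\ge 2$: your recipe on pure powers sends $\theta=A\in\mathit{Schur}(1)=\End(V)$ and $\psi=B^{\otimes j}$ to $(AB)^{\otimes j}$. But $A\mapsto (AB)^{\otimes j}$ is homogeneous of degree $j$ in $A$, not linear, so it cannot arise from a linear map out of $\mathit{Schur}(1)$. Dually, in your $\mathcal O(M_n)$ picture the ``outer'' matrix slot contributes degree $N=\sum j_i$, so the cocomposition lands in $A(n,N)\otimes A(n,j_1)\otimes\cdots\otimes A(n,j_k)$, not in $A(n,k)\otimes\cdots$; the first tensor factor has the wrong degree to dualize to $\mathit{Schur}(k)$. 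Even setting linearity aside, your formula is specified only on pure powers $A^{\otimes r}$, which span $\Gamma^r(\End V)$ with relations; compatibility with those relations must be checked, and the degree mismatch above shows it already fails for $r=1$. So the sketch does not yield an operad in the paper's sense, and in any case it is not the operad the paper actually defines.
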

See Proposition \ref{1.1.10} for details. The Lie algebra of derivations is a ``module'' over the operad $\mathit{Schur}$, which gives us the second main result of this paper (See Theorem \ref{1.1.13} for details):

\begin{theorem*}
 The Lie algebra of derivations $\mathrm{Der}_*=\displaystyle\bigoplus_{p=2}^\infty\Hom_K\left(V_n,L^p(X_n)\right)$ is a ``module'' over the operad $\mathit{Schur}$ which is generated by $2$ elements.
\end{theorem*}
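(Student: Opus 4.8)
The plan is to treat the two halves of the statement separately. That $\mathrm{Der}_\ast$ is a ``module'' over $\mathit{Schur}$ follows by a direct check of the module axioms from the construction underlying Proposition~\ref{1.1.10}, the bracketing maps of the free Lie algebra $L(X_n)$ furnishing the needed compatibilities; so the real content is the generation claim, which I would prove by induction on the polynomial degree $p$. Two features of the $\mathit{Schur}$-action on $\mathrm{Der}_\ast$ are used. First, on each homogeneous piece it restricts to the natural $\mathrm{GL}(V_n)$-action on $\mathrm{Der}_p=\Hom_K\!\left(V_n,L^p(X_n)\right)\cong V_n^\ast\otimes L^p(X_n)$; this is how the Schur algebra enters. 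Second, the arity-two part of the action contains the operation of \emph{substituting one derivation into the output of another}: for $A\in\mathrm{Der}_a$ and $B\in\mathrm{Der}_b$ one obtains $A\ast B\in\mathrm{Der}_{a+b-1}$, the unique derivation of $L(X_n)$ whose restriction to $V_n$ is $\widehat A\circ B$, where $\widehat A$ is the extension of $A$ to a derivation of $L(X_n)$ (recall every linear map $V_n\to L(X_n)$ is the restriction of exactly one derivation). Granting these, it suffices to prove: (a) $\mathrm{Der}_2$ is generated, under $\mathrm{GL}(V_n)$ alone, by two elements; and (b) for each $p\ge 3$ the piece $\mathrm{Der}_p$ lies in the $\mathit{Schur}$-submodule generated by $\mathrm{Der}_2$. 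The fact that two generators suffice, rather than one for each free generator $x_i$, reflects the bookkeeping carried by the symmetric-group / Schur structure built into the operad.

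For (a) I would take $K$ a field of characteristic zero and use $L^2(X_n)\cong\Lambda^2 V_n$, so that $\mathrm{Der}_2\cong V_n^\ast\otimes\Lambda^2 V_n$ as a rational $\mathrm{GL}(V_n)$-module. The contraction $V_n^\ast\otimes\Lambda^2 V_n\to V_n$ is split surjective with irreducible ``traceless'' kernel $W_n$, so $\mathrm{Der}_2\cong V_n\oplus W_n$ has exactly two irreducible summands (for $n\ge 3$; $n\le 2$ is degenerate and dispatched directly). As generators I would take the inner derivation $\delta_1=\mathrm{ad}_{x_1}$, a highest-weight vector of the summand $\cong V_n$, and the elementary derivation $\delta_2$ with $\delta_2(x_n)=[x_1,x_2]$ and $\delta_2(x_j)=0$ for $j\ne n$, a highest-weight vector of $W_n$. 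Since a highest-weight vector generates an irreducible $\mathrm{GL}(V_n)$-module, $\{\delta_1,\delta_2\}$ generates $V_n\oplus W_n=\mathrm{Der}_2$; and one generator cannot suffice, since the degree-two part of the submodule it generates lies in a single $\mathrm{GL}(V_n)$-isotype, a proper summand of $\mathrm{Der}_2$.

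For (b), let $M\subseteq\mathrm{Der}_\ast$ be the $\mathit{Schur}$-submodule generated by $\{\delta_1,\delta_2\}$; by (a), $\mathrm{Der}_2\subseteq M$, and I induct on $p\ge 3$ with inductive hypothesis $\mathrm{Der}_{p-1}\subseteq M$. For $v\in V_n$ the derivation $A=\mathrm{ad}_v$ lies in the $V_n$-summand of $\mathrm{Der}_2\subseteq M$ and satisfies $\widehat A=\mathrm{ad}_v$. Choosing $B\in\mathrm{Der}_{p-1}\subseteq M$ to be the derivation supported on a single free generator $x_{j_0}$ with value $w\in L^{p-1}(X_n)$, the element $A\ast B\in M\cap\mathrm{Der}_p$ is the derivation with $x_{j_0}\mapsto[v,w]$ and $x_j\mapsto 0$ for $j\ne j_0$. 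As $v$ runs over $V_n$ and $w$ over $L^{p-1}(X_n)$, the brackets $[v,w]$ span $[V_n,L^{p-1}(X_n)]=L^p(X_n)$, since by repeated use of the Jacobi identity every length-$p$ Lie monomial is a combination of brackets of a free generator with a length-$(p-1)$ element. Hence $M$ contains every derivation supported on a single free generator, and summing over $j_0=1,\dots,n$ gives $\mathrm{Der}_p=\Hom_K\!\left(V_n,L^p(X_n)\right)\subseteq M$. This closes the induction, so $M=\mathrm{Der}_\ast$.

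The main obstacle I anticipate is the second feature of the module structure used above: isolating precisely which operadic compositions in $\mathit{Schur}$ realise substitution along the output of a derivation, and verifying the module axioms compatibly with that description. A secondary difficulty, arising only if one wants (a) over an arbitrary ground ring rather than a characteristic-zero field, is that $V_n^\ast\otimes\Lambda^2 V_n$ may fail to be semisimple; then ``two irreducible summands'' must be replaced by the cell/Weyl-module structure of the (rational) Schur algebra, and one argues that the two standard modules involved are cyclic, generated again by $\delta_1$ and $\delta_2$. By contrast the Lie-theoretic input $L^p(X_n)=[V_n,L^{p-1}(X_n)]$ and the degree induction in (b) are routine.
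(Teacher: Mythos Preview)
There are two genuine gaps, both stemming from operations you assume the $\mathit{Schur}$-module structure provides but which are not there. First, the ``substitution'' $A\ast B$ in (b) is not part of the structure: the action of Definition~\ref{1.11} is purely degreewise ($f\in\mathrm{Schur}(q,n,K)$ gives $\Phi_f:\mathrm{Der}_q\to\mathrm{Der}_q$), the operad composition $\boxtimes$ lives entirely on the Schur side, and the only binary operation on derivations available is the Lie bracket---the generation statement concerns $\overline{S}(\widetilde{M}_n,K)$, by definition the smallest sub-\emph{Lie algebra} closed under all $\Phi_f$. Replacing $A\ast B$ by $[A,B]$ while keeping $A=\mathrm{ad}_v$ does not rescue the argument, since $[D,\mathrm{ad}_v]=\mathrm{ad}_{D(v)}$ for every derivation $D$: bracketing against inner derivations never leaves the inner derivations, and $[\mathrm{ad}_{x_i},f_{j_0,w}]$ equals $0$ for $i\ne j_0$ and $-\mathrm{ad}_w$ for $i=j_0$, never the singly-supported $f_{j_0,[v,w]}$ your induction needs. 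Second, in (a) the Schur action on $\mathrm{Der}_q\cong V_n^\ast\otimes L^q(X_n)$ touches only the $L^q$ factor (immediately from $\Phi_f(\theta)=(f\circ I_{n,q}\circ\theta|_{X_n})^{\mathrm{Ext}}$), so the full $\mathrm{GL}(V_n)$-action on $V_n^\ast\otimes\Lambda^2V_n$ is not available and two highest-weight vectors cannot generate $\mathrm{Der}_2$; this is exactly why the paper's generating set in Corollary~\ref{4.2.8} and Theorem~\ref{1.1.13} has $n$ elements $\{v_1,\dots,v_n\}$, one for each source index $i$.

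For comparison, the paper's inductive step brackets with the non-inner quadratic derivation $\widetilde{\chi}_{i,j}$ to obtain $[\widetilde{\chi}_{i,j},f_{j,u}]=-f_{i,[x_i,u]}+f_{j,\widetilde{\chi}_{i,j}(u)}$ (Proposition~\ref{prop 4.2.2}), then exhibits a specific $h\in\mathrm{Schur}(k+1,n,K)$ with $\Phi_h$ annihilating the unwanted summand and returning $f_{i,[x_i,u]}$ (Lemma~\ref{lemma 4.2.5}), and finally applies the Schur element $\zeta_{k+1,(i,j)}$ to pass from $f_{i,[x_i,u]}$ to $f_{i,[x_j,u]}$. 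The Lie bracket raises degree and the Schur action cleans up within that degree; your proposal replaces this interplay with an operation the module does not possess.
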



The paper is mainly based on the thesis of Jin Tao \cite{Jin}.  The article is organized as follows: In Section \ref{Preliminaries}, we recall the necessary ingredients needed for the rest of the paper. In particular, the Lie algebra associated to the lower central series of a group $G$ and the group of so-called $\mathrm{IA}$-automorphisms of $G$. Section \ref{Calculation} deals with the calculation involving the derivations of a free Lie algebra in order to explore the structure of certain subgroups of the automorphism group of a finitely generated free group. We prove the main theorem stating that any two distinct elements in $\mathrm{IA}_n$ either generate a free group or a free abelian group. In Section \ref{Schur}, we recall the definition of Schur algebra and we prove a few results regarding the Schur algebra on the tensor product of a finitely generated free $K$-module.
In Section \ref{Associativity}, we study some properties of Schur algebras and their action on
the Lie algebra of derivations of a free Lie algebra. By using the inter-degree multiplication
induced from a transfer-like map, it is shown that the Schur algebra forms an associative and anti-commutative graded K-algebra. In Section  \ref{Action}, the Schur-operad structure is constructed.  We show that the Lie algebra of derivations can be generated by quadratic derivations together with the action of the Schur operad.
We introduce a short exact sequence of Lie algebras which is obtained from certain group filtrations in Section \ref{Exact}.

\section{Preliminaries}\label{Preliminaries}
In this section, we recall the necessary ingredients needed later in this paper including the Lie algebra associated to the lower central series of a group $G$ and the group of $\mathrm{IA}$-automorphism of $G$. Let $X_n=\{1,\ldots,n\}$ and let the free Lie algebra generated by $X_n$ over a commutative ring with identity $K$ be denoted by $L_K(X_n)$, or simply $L(X_n)$. The free Lie algebra $L_K(X_n)$ is naturally a graded $K$-module and 
\[L_K(X_n)=\bigoplus_{p=1}^{\infty}L_K^p(X_n),\] where each component $L_K^p(X_n)$ is a free $K$- module. Let $G$ be a group and $\{\Gamma^mG\}_{m\in\mathbb{Z}^+}$ be the lower central series of $G$. There is a naturally induced homomorphism
\[\mathrm{Aut}(G)\to\mathrm{Aut}(G/\Gamma^mG)\] for every positive integer $m$. Let the kernel of this induced homomorphism be denoted by $J^m\mathrm{Aut}(G)$. 
\begin{df}
A group $G$ is called residually nilpotent if 
$\displaystyle\bigcap_{m=1}^{\infty} \Gamma^mG$ is the trivial group.
\end{df}
Recall that for an algebra $A$ over a field $K$, a derivation $D:A\to A$ is a $K$-linear map such that
 \[D(xy)=(Dx)y+x(Dy).\] The set $\mathrm{Der}(A)$ of all derivations is a Lie algebra with the product 
 \[[D,D']=DD'-D'D.\]  
The derivations of the free Lie algebra $L(X_n)$ is a naturally graded $K$-module
\[\mathrm{Der}_{\ast}^{\triangle}\left(L(X_n)\right)=\bigoplus_{p=1}^{\infty}\mathrm{Der}_p\left(L(X_n)\right),\]
where $\mathrm{Der}_p\left(L(X_n\right))$ is the component of degree $2p$; that is,
\[\mathrm{Der}_p\left(L(X_n)\right)=\{f\in\mathrm{Der}_{\ast}^{\triangle}\left(L(X_n)\right)\mid f(x_i)\in L^p(X_n), 1\leq i\leq n \}.\]

\begin{rmk}\label{doubling}
	Notice that for notation in this paper, we adopt the convention that an element in $V_n(K)^{\otimes q}$ has degree $2q$; that is, we define
\[T\left(V_n(K)\right)=\bigoplus_{q=0}^\infty T^q\left(V_n(K)\right),\]
where 
\[T^{2q}\left(V_n(K)\right)=V_n(K)^{\otimes q}\]
and 
\[T^{2q+1}\left(V_n(K)\right)=0.\]In particular, the grading of   $Der_p (L(X_n))$ is the component of degree $2p$.
\end{rmk}

Since every $f\in\mathrm{Der}_{\ast}^{\triangle}\left(L(X_n)\right)$ is uniquely determined by the restriction of $f$ on $X_n$, we have
\[\mathrm{Der}_p\left(L(X_n)\right)\cong\Hom_K\left(V_n,L^p(X_n)\right)\]
and
\[\mathrm{Der}_{\ast}^{\triangle}\left(L(X_n)\right)\cong\bigoplus_{p=1}^\infty\Hom_K\left(V_n,L^p(X_n)\right).\]

The sub-Lie algebra $\mathrm{Der}_{\ast}\left(L(X_n)\right)$ which is defined as
\[\mathrm{Der}_{\ast}\left(L(X_n)\right)=\bigoplus_{p=2}^\infty\Hom_K\left(V_n,L^p(X_n)\right)\]is called the derivations of the free Lie algebra $L(X_n)$.

Let $V_n(K)$ (or simply $V_n$) be the free $K$-module generated by $X_n=\{x_1,\cdots,x_n\}$ and let $\Sigma_q$ be the symmetric group on $q$ letters. The symmetric group $\Sigma_q$ acts on $V_n(K)^{\otimes q}$ from the right by 
\[(y_1\otimes\cdots\otimes y_q)\cdot\sigma=y_{\sigma^{-1}(1)}\otimes\cdots\otimes y_{\sigma^{-1}(q)},\]
where $\sigma\in\Sigma_q$ and $y_1\otimes\cdots\otimes y_q\in V_n^{\otimes q}$.

Recall that for a given group $G$, an integral filtration of $G$ is a sequence $\{F^mG\}_{m\in\mathbb{Z}^+}$ of subgroups of $G$ such that (1) $F^1G=G$ and (2) $[F^mG,F^nG]\subseteq F^{m+n}G$. Given an integral filtration $\{F^mG\}_{m\in\mathbb{Z}^+}$, there is an associated graded Lie algebra structure
\[\mathrm{gr}_{\ast}G=\bigoplus_{m=1}^{\infty}\mathrm{gr}_mG,\]
where $\mathrm{gr}_\ast G=F^mG/F^{m+1}G$ \cite{S}. 

	Let $G$ be a group and $F^{m}G=\Gamma^m G$ be the $m$-th term of the lower central series of $G$. We then have the following definition, which is a special case of associated graded Lie algebra:
\begin{df}
 The associated graded Lie algebra is called the Lie algebra associated to the lower central series of $G$ and is denoted by $\mathrm{gr}^{\mathrm{LCS}}_\ast G$.
\end{df}
\begin{rmk}
the Lie algebra associated to the lower central series of $G$ is a Lie algebra, and graded, but it is not a "graded Lie algebra" as defined by Milnor-Moore \cite{MM}. For example, in a graded Lie algebra it may be the case that $[x,x]$ is not equal to $0$. In the graded case 
$$[x,y] =(-1)^{|x||y|} [y,x].$$ Thus if $x = y$ are of degree 1, then $$[x,x] = 2x^2$$ in the universal enveloping algebra (over a field).

To avoid this issue, we may double the grading of each element in the Lie algebra
for the descending central series where $[x,x] = 0$.
\end{rmk}
\begin{lem}\label{3.1.8}
	Let $f$ be a homomorphism from a residually nilpotent group $G$ to a group $H$. If the induced Lie algebra homomorphism
	\[f_{\ast}:\mathrm{gr}^{\mathrm{LCS}}_{\ast}G\to\mathrm{gr}^{\mathrm{LCS}}_{\ast}H\] is a monomorphism, then so is $f$.
\end{lem}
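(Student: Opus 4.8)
The plan is to argue on kernels, by contraposition: I will show that if $f$ fails to be injective, then $f_\ast$ fails to be injective as well. So suppose $g\in\ker f$ with $g\neq 1$. Since $G$ is residually nilpotent, $\bigcap_{m\geq 1}\Gamma^m G$ is trivial, so $g$ cannot lie in every term of the lower central series; hence there is a largest integer $m\geq 1$ with $g\in\Gamma^m G$ but $g\notin\Gamma^{m+1}G$. Consequently $g$ represents a \emph{nonzero} class $[g]\in\Gamma^m G/\Gamma^{m+1}G=\mathrm{gr}^{\mathrm{LCS}}_m G$.

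Next I would recall why the map $f_\ast$ exists at all: the lower central series is functorial, so $f(\Gamma^m G)\subseteq\Gamma^m H$ for every $m$, and this is precisely what lets $f$ descend to a graded Lie algebra homomorphism $f_\ast\colon\mathrm{gr}^{\mathrm{LCS}}_\ast G\to\mathrm{gr}^{\mathrm{LCS}}_\ast H$ acting by $f_\ast[x]=[f(x)]$ on each graded piece. Applying this to the class $[g]$ produced above gives
\[
f_\ast[g]=[f(g)]=[1]=0,
\]
because $g\in\ker f$. Thus $[g]$ is a nonzero element of $\ker f_\ast$, contradicting the hypothesis that $f_\ast$ is a monomorphism. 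Therefore $\ker f$ is trivial and $f$ is a monomorphism, as claimed.

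I do not expect any real obstacle here; the single point that deserves care is the use of residual nilpotence, which is exactly the hypothesis guaranteeing that a nontrivial element of $G$ has a nontrivial "leading term" in the associated graded Lie algebra. Without it, an element of $\bigcap_m\Gamma^m G$ would map to $0$ in every graded piece, and the conclusion would genuinely fail. Note also that no assumption on $H$ (such as residual nilpotence of $H$) is needed for the argument.
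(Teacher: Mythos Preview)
Your proof is correct and follows essentially the same approach as the paper's: argue by contradiction, use residual nilpotence to locate a nontrivial kernel element in some $\Gamma^r G\setminus\Gamma^{r+1}G$, and observe that its image under $f_\ast$ vanishes, contradicting injectivity of $f_\ast$. Your write-up is in fact slightly more thorough, since you make explicit the functoriality $f(\Gamma^m G)\subseteq\Gamma^m H$ underlying the definition of $f_\ast$.
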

\begin{proof}
Suppose $f$ is not a monomorphism, then there exists some non-identity element $g\in G$ such that $f(g)=1$. Since $G$ is residually nilpotent, $g\in\Gamma^r G\setminus\Gamma^{r+1}G$ for some positive integer $r$. So
\[f\left(g\cdot\Gamma^{r+1}G\right)\subseteq \Gamma^{r+1}H.\]
But since $f_\ast$ is one-to-one, this implies $g\cdot\Gamma^{r+1}G$ is the identity element in $\Gamma^{r}G/\Gamma^{r+1}G$, which implies $g\in\Gamma_{r+1}G$ and this is a contradiction.   
\end{proof}
\begin{df}
Let $G$ be a group. The group of IA-automorphism of $G$ is defined to be \[J^2\mathrm{Aut}(G):=\mathrm{ker}\left(\mathrm{Aut}(G)\to\mathrm{Aut}\left(G/[G,G]\right)\right).\] That is, each IA-automorphism of $G$ is defined as an automorphism of $G$ which induces the identity automorphism on $G/[G,G]$. Furthermore, we write $J^2\mathrm{Aut}\left(F(X_n)\right)$ as $\mathrm{IA}_n$. 
\end{df}

Let $J^m\mathrm{Aut}(F(X_n))$ be the group of automorphisms of $F(X_n)$ which induces identity on the quotient group $F(X_n)/\Gamma_{m+1}F(X_n)$. The Johnson filtration on $IA_n$ is defined as the filtration
\[IA_n=J^2\mathrm{Aut}(F(X_n))\supseteq J^3\mathrm{Aut}(F(X_n))\supseteq\cdots.\]

Clearly the kernel of the natural quotient
\[\Aut(F(X_n))\to \mathrm{GL}(n,\mathbb{Z})\] is precisely $IA_n$. A finite set of generators of $IA_n$ is given in \cite{MR2109550} as follows:
\[M_n=\{\chi_{i,j}\mid 1\leq i,j\leq n, i\neq j\}\bigcup\{\theta_{i,[x_s,x_t]}\mid i\notin \{s,t\},1\leq i\leq n, 1\leq s<t\leq n\},\]where
\[
\chi_{i,j}(x_k)=\begin{cases}
x_k,\quad\quad\text{if } k\neq i,\\
x_j^{-1}x_kx_j, \text{if } k=i.
\end{cases}
\]and
\[\theta_{i,[x_s,x_t]}(x_k)=\begin{cases}
x_k,\quad\quad\text{if } k\neq i,\\
x_i[x_s,x_t], \text{if } k=i.
\end{cases}\]  

We define two types of derivations of $L(X_n)$ as follows:

\[\widetilde{M}_n=\{\widetilde{\chi}_{i,j}\mid 1\leq i,j\leq n, i\neq j\}\bigcup\{\widetilde{\theta}_{i,[x_s,x_t]}\mid i\notin \{s,t\},1\leq i\leq n, 1\leq s<t\leq n\},\]where
\[
\widetilde{\chi}_{i,j}(x_k)=\begin{cases}
0,\quad\quad\text{if } k\neq i,\\
[x_i,x_j], \text{if } k=i.
\end{cases}
\]and
\[\widetilde{\theta}_{i,[x_s,x_t]}(x_k)=\begin{cases}
0,\quad\quad\text{if } k\neq i,\\
[x_s,x_t], \text{if } k=i.
\end{cases}\]  

\section{Calculation with the Derivations}\label{Calculation}
This section deals with the calculation involving the derivations of a free Lie algebra in order to explore the structure of certain subgroups of the automorphism group of a finitely generated free group. We prove the main theorem stating that any two distinct elements in $\mathrm{IA}_n$ either generate a free group or a free abelian group. Let $u=[\widetilde{\chi}_{i,j_1},\ldots,\widetilde{\chi}_{i,j_r}]$ be a Lie monomial of degree $2r$ in $\mathrm{Der}_{\ast}\left(L_{\mathbb{Z}}(X_n)\right)$ ($n\geq 3$) and $\check{u}$ be the Lie monomial obtained from $u$ by replacing $\widetilde{\chi}_{i,j_k}$ in $u$ with $x_{j_k}$ for every $k=1,\ldots,r$.

The following lemma can be easily proved by induction on the degree of simple Lie monomials:
\begin{lem}\label{3.2.2}
	We have
	\[u(x_k)=\begin{cases}
	[x_i,\check{u}],\text{ if } k=i;\\0,\text{ otherwise.}
	\end{cases}\]
\end{lem}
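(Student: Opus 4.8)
The plan is to induct on the degree $2r$ of $u$, equivalently on the number $r$ of factors $\widetilde{\chi}_{i,j_\ell}$ appearing in it. For $r=1$ we have $u=\widetilde{\chi}_{i,j_1}$ and $\check u=x_{j_1}$, so the assertion is nothing but the defining formula for $\widetilde{\chi}_{i,j_1}$. For the inductive step I would write $u=[v,w]$ as a bracket of two Lie monomials $v,w$ of strictly smaller degree, each of which is again an iterated bracket of factors of the form $\widetilde{\chi}_{i,j_\ell}$; by the way the $\check{\ }$-construction is defined we then have $\check u=[\check v,\check w]$, and the induction hypothesis gives $v(x_k)=w(x_k)=0$ for $k\neq i$ together with $v(x_i)=[x_i,\check v]$ and $w(x_i)=[x_i,\check w]$.

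First I would dispose of the case $k\neq i$: since $[v,w]=vw-wv$ and both $v$ and $w$ annihilate $x_k$, we get $u(x_k)=0$ at once. The case $k=i$ carries the actual content. Applying $u=vw-wv$ to $x_i$ and using that $v$ and $w$ are derivations yields
\[
u(x_i)=v\bigl([x_i,\check w]\bigr)-w\bigl([x_i,\check v]\bigr)=[v(x_i),\check w]+[x_i,v(\check w)]-[w(x_i),\check v]-[x_i,w(\check v)].
\]
Here the key observation is that every index $j_\ell$ occurring in $u$ satisfies $j_\ell\neq i$ — this is built into the definition of $\widetilde{\chi}_{i,j_\ell}$ — so $\check v$ and $\check w$ lie in the Lie subalgebra of $L(X_n)$ generated by $\{x_k\mid k\neq i\}$. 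Since each generator $\widetilde{\chi}_{i,\cdot}$, and hence (by the induction hypothesis) each of $v$ and $w$, kills every such $x_k$, a derivation argument gives $v(\check w)=w(\check v)=0$. Substituting $v(x_i)=[x_i,\check v]$ and $w(x_i)=[x_i,\check w]$ then collapses the identity to
\[
u(x_i)=[[x_i,\check v],\check w]-[[x_i,\check w],\check v],
\]
and one finishes by recognizing the right-hand side, via the Jacobi identity applied to $x_i,\check v,\check w$, as $[x_i,[\check v,\check w]]=[x_i,\check u]$.

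The induction itself is essentially mechanical. The two points that genuinely need care — and which I would expect to be the only mild obstacles — are the vanishing of the cross terms $v(\check w)$ and $w(\check v)$, which rests precisely on the fact that the $\check{\ }$-construction never produces an occurrence of $x_i$ since $j_\ell\neq i$ for all $\ell$, and the final rewriting, which is exactly one application of the Jacobi identity together with the usual sign bookkeeping for the bracket.
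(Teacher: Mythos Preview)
Your argument is correct and follows exactly the route the paper indicates: the paper simply says the lemma ``can be easily proved by induction on the degree of simple Lie monomials'' and gives no further details, while you carry out precisely that induction, including the vanishing of the cross terms $v(\check w)$, $w(\check v)$ and the concluding Jacobi step. Nothing is missing and nothing diverges from the paper's intended proof.
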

The following proposition can be found in \cite{CP}:
\begin{pro}\label{3.1.6}
	Let $L[S]$ be the free Lie algebra over the integers generated by a set $S$, and let $a$ be an element of $S$ with $S$ of cardinality at least $2$. Then the centralizer of $a$ in $L[S]$ is the linear span of $a$.
\end{pro}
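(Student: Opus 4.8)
The plan is to pass to the universal enveloping algebra and split the claim into two standard facts about free associative rings. Let $A$ denote the free associative $\Z$-algebra on $S$, that is, the monoid ring $\Z[S^{\ast}]$ of the free monoid $S^{\ast}$. Since $L[S]$ is free as a $\Z$-module, the Poincar\'e--Birkhoff--Witt theorem applies: $A$ is canonically the universal enveloping algebra of $L[S]$, the structure map $L[S]\to A$ is injective, and under it the bracket becomes the commutator $[x,y]=xy-yx$. Hence the centralizer of $a$ in $L[S]$ is $L[S]\cap C$, where $C=\{w\in A\mid aw=wa\}$ is the centralizer of $a$ in $A$; and since $\Z a\subseteq C$ trivially, it remains only to prove $L[S]\cap C=\Z a$.

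First I would show that $C$ equals the polynomial subring $\Z[a]=\bigoplus_{d\ge 0}\Z a^{d}$. Every monomial of $S^{\ast}$ that is not a power of $a$ has a unique expression $a^{k_{0}}s_{1}a^{k_{1}}\cdots s_{r}a^{k_{r}}$ with $r\ge 1$, all $k_{i}\ge 0$ and all $s_{i}\in S\setminus\{a\}$; call $s_{1}\cdots s_{r}$ its profile. Writing $w=\sum_{m}c_{m}m$ and expanding $aw-wa=0$ in the monomial basis, the identity decouples into independent relations, one for each profile and each total degree; within a fixed profile, replacing $m$ by $am$ raises $k_{0}$ by one and replacing $m$ by $ma$ raises $k_{r}$ by one. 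Matching coefficients, the monomials beginning with $s_{1}$ or ending with $s_{r}$ are forced to vanish at once, and the remaining relations propagate this, so $c_{m}=0$ for every $m$ of nonempty profile, that is, $w\in\Z[a]$; the reverse inclusion is clear. This is the classical determination of the centralizer of a free generator in a free associative ring---an elementary special case of Bergman's centralizer theorem---and could instead simply be cited.

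Finally I would intersect $\Z[a]$ with $L[S]$. Both are graded $\Z$-submodules of $A$, so the intersection is graded; since the degree-$1$ part of $\Z[a]$ is $\Z a\subseteq L[S]$, it suffices to check $a^{d}\notin L^{d}[S]$ for $d\ge 2$. For this, consider the Lie homomorphism $\rho\colon L[S]\to L[\{a\}]$ sending $a\mapsto a$ and every other generator to $0$. The free Lie algebra on one generator is $L[\{a\}]=\Z a$, and functoriality of the enveloping algebra extends $\rho$ to an algebra map $A\to U(L[\{a\}])=\Z[a]$ fixing $a$, which therefore sends $a^{d}$ to $a^{d}$ and restricts to $\rho$ on $L[S]$. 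Were $a^{d}$ in $L[S]$, then $a^{d}=\rho(a^{d})$ would lie in $L[\{a\}]=\Z a$, impossible for $d\ge 2$. (Equivalently, the Dynkin operator acts on $L^{d}[S]$ as multiplication by $d$ but annihilates $a^{d}$, since $\delta(a^{d})$ is an iterated bracket of copies of $a$ whose innermost entry is $[a,a]=0$.) Thus $L[S]\cap\Z[a]=\Z a$, and combined with the previous step the centralizer of $a$ in $L[S]$ is exactly its $\Z$-span. The only part of this argument that is not purely formal is the coefficient bookkeeping identifying $C$ with $\Z[a]$, so that step---or the citation replacing it---is where the real content sits.
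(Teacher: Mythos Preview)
Your argument is correct. The passage to the universal enveloping algebra is legitimate over $\Z$ because $L[S]$ is free as a $\Z$-module, the identification of the centralizer of a free generator in $\Z\langle S\rangle$ with $\Z[a]$ is sound (your coefficient-matching sketch can be made rigorous exactly as you indicate: monomials with $k_{0}=0$ or $k_{r}=0$ die immediately, and the remaining relations $c_{(k_{0}-1,\ldots,k_{r})}=c_{(k_{0},\ldots,k_{r}-1)}$ push every coefficient to that boundary), and the final step ruling out $a^{d}$ for $d\ge 2$ works by either of your two devices. One tiny point worth making explicit: since $\Z[a]\cap L^{d}[S]$ is a subgroup of $\Z a^{d}$, you should note that your argument excludes not just $a^{d}$ but any nonzero multiple $c\,a^{d}$; both the $\rho$ argument and the Dynkin-operator argument do this without change.

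As for comparison with the paper: the paper does not actually prove this proposition. It is stated with the attribution ``The following proposition can be found in \cite{CP}'' and no argument is given. So your write-up supplies a self-contained proof where the paper relies on an external reference; there is nothing further to compare.
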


\begin{lem}\label{3.2.3}
	The set $\widetilde{C}_r:=\{\widetilde{\chi}_{i,j_1},\ldots,\widetilde{\chi}_{i,j_r}\}$, $1\leq i\leq n$ generates a free sub-Lie algebra $S(\widetilde{C}_r,\mathbb{Z})$ in $\mathrm{Der}_{\ast}\left(L_{\mathbb{Z}}(X_n)\right)$.
\end{lem}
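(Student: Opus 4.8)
The goal is to show that the $r$ derivations $\widetilde{\chi}_{i,j_1},\ldots,\widetilde{\chi}_{i,j_r}$ (all sharing the same lower index $i$) satisfy no relations in the Lie algebra $\mathrm{Der}_*(L_{\mathbb{Z}}(X_n))$ other than those forced by the Lie algebra axioms, i.e. that the sub-Lie algebra they generate is free on these $r$ generators. The natural strategy is to exhibit an explicit monomorphism from the free Lie algebra $L[\widetilde{C}_r]$ onto $S(\widetilde{C}_r,\mathbb{Z})$, or equivalently to build a retraction. I would use Lemma \ref{3.2.2}: for a bracket monomial $u = [\widetilde{\chi}_{i,j_{k_1}},\ldots,\widetilde{\chi}_{i,j_{k_m}}]$ we have $u(x_i) = [x_i,\check u]$ and $u(x_k) = 0$ for $k \neq i$, where $\check u$ is the corresponding bracket monomial in the $x_{j_{k_\ell}}$'s inside $L(X_n)$. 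This already tells us that the assignment $u \mapsto \check u$ is (the restriction of) a Lie algebra map, because bracketing of derivations corresponds to bracketing of the $\check{\ }$-values: one checks that $[u,v](x_i) = [x_i,[\check u,\check v]]$, so $\widecheck{[u,v]} = [\check u,\check v]$.

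First I would make this precise. Consider the free Lie algebra $L[Y_r]$ on symbols $Y_r = \{y_1,\ldots,y_r\}$ and the Lie algebra homomorphism $\Phi\colon L[Y_r]\to \mathrm{Der}_*(L_{\mathbb{Z}}(X_n))$ determined by $y_k\mapsto \widetilde{\chi}_{i,j_k}$; its image is by definition $S(\widetilde{C}_r,\mathbb{Z})$, so it suffices to prove $\Phi$ is injective. Next, consider the Lie algebra homomorphism $\Psi\colon L[Y_r]\to L_{\mathbb{Z}}(X_n)$ determined by $y_k\mapsto x_{j_k}$. Lemma \ref{3.2.2} (together with the bracket computation above, proved by an easy induction on degree) shows that for every $w\in L[Y_r]$ the derivation $\Phi(w)$ acts by $\Phi(w)(x_i) = [x_i,\Psi(w)]$ and $\Phi(w)(x_k)=0$ for $k\neq i$. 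Consequently, if $\Phi(w)=0$ then $[x_i,\Psi(w)]=0$ in $L_{\mathbb{Z}}(X_n)$. Now apply Proposition \ref{3.1.6}: since $n\geq 3$ and $x_i$ is one of at least two free generators $X_n$ of $L_{\mathbb{Z}}(X_n)$, the centralizer of $x_i$ is exactly $\mathbb{Z}x_i$, so $\Psi(w)\in \mathbb{Z}x_i$. But $\Psi(w)$ lies in the sub-Lie algebra generated by the $x_{j_k}$ with $j_k\neq i$, which is a free Lie algebra on a subset of $X_n$ not containing $x_i$; its intersection with $\mathbb{Z}x_i$ is $0$. Hence $\Psi(w)=0$.

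It then remains to pass from $\Psi(w)=0$ to $w=0$, i.e. to see that $\Psi$ itself is injective. This is immediate: the $j_k$ for $1\leq k\leq r$ are the second indices of the $\widetilde{\chi}_{i,j_k}$, which are pairwise distinct (the elements of $\widetilde C_r$ are distinct), so $\{x_{j_1},\ldots,x_{j_r}\}$ is a subset of the free generating set $X_n$ of $L_{\mathbb{Z}}(X_n)$ of cardinality $r$, and the sub-Lie algebra it generates is free on it; thus $\Psi$ is an isomorphism onto its image. Therefore $w=0$, proving $\Phi$ is a monomorphism and $S(\widetilde C_r,\mathbb{Z})$ is free on $\widetilde C_r$.

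**Where the work is.** None of the steps is deep; the only thing that needs genuine care is the inductive verification that $\widecheck{[u,v]}=[\check u,\check v]$ and the book-keeping that makes "$\Phi(w)(x_i)=[x_i,\Psi(w)]$ for all $w$" (not just for bracket monomials) a legitimate statement — one wants to extend Lemma \ref{3.2.2} from simple Lie monomials to arbitrary Lie elements by linearity and the Jacobi identity, and to confirm that the map $w\mapsto \check w$ is well-defined (independent of the way $w$ is written as a bracket expression). The substantive input, namely that a nonzero element cannot be killed, is entirely supplied by Proposition \ref{3.1.6}; the hypothesis $n\geq 3$ enters precisely to guarantee $X_n$ has cardinality at least $2$ so that proposition applies (and, more than that, that there is an index $j_k \neq i$ available). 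I would flag that if $r$ were allowed to range so that all indices $1,\ldots,n$ except $i$ appear, the argument is unchanged — the only constraint used is $j_k\neq i$ and the distinctness of the $j_k$.
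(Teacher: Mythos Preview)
Your proof is correct and follows essentially the same route as the paper: both define the canonical Lie homomorphism $\Phi$ (the paper calls it $\widetilde f_r$) from $L[Y_r]$ sending $y_k\mapsto\widetilde\chi_{i,j_k}$, evaluate $\Phi(w)$ at $x_i$ via Lemma~\ref{3.2.2} to get $[x_i,\Psi(w)]$, and then invoke Proposition~\ref{3.1.6} to conclude injectivity. Your write-up is in fact a bit more careful than the paper's---you make the auxiliary map $\Psi$ explicit as a Lie algebra homomorphism (so well-definedness of $\check{(\,\cdot\,)}$ is automatic from the universal property), you spell out why $\Psi$ is injective (distinctness of the $x_{j_k}$ inside the free generating set), and you note that Proposition~\ref{3.1.6} only gives $\Psi(w)\in\mathbb Z x_i$, which then forces $\Psi(w)=0$ because the image of $\Psi$ avoids $x_i$---whereas the paper compresses these last two steps into a single unproven assertion.
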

\begin{proof}
	Let $Y_r=\{y_1,\ldots,y_r\}$ be a set of $r$ elements and let $\widetilde{f}_r:L_{\mathbb{Z}}(Y_r)\to S(\widetilde{C}_r,\mathbb{Z})$ be the Lie epimorphism extended from the map $f_r$ which maps every $y_k$ to $\widetilde{\chi}_{i,j_k}$.
	
	Let $u=\sum a_ku_k$ be a non-zero element in $L_{\mathbb{Z}}(Y_r)$, where each $u_k$ is a Lie monomial. For every Lie monomial $v$, let $v^{\#}$ be the lie monomial in $\mathrm{Der}_{\ast}\left(L_{\mathbb{Z}}(X_n)\right)$ obtained by replacing every $y_k$ by $\widetilde{\chi}_{i,j_k}$. By Lemma \ref{3.2.2}, 
	\[\widetilde{f}_r(u)(x_k)=\sum_{k}a^ku_k^{\#}(x_k)=\begin{cases}
	[x_i,\sum_ka_k\check{u}^{\#}], $ if $k=i\\
	0, $ if $ k\neq i.
	\end{cases}\]
	Since $\sum_ka_k\check{u}^{\#}$ is non-zero, so is $[x_i,\sum_ka_k\check{u}^{\#}]$ by Proposition $\ref{3.1.6}$. Thus $\widetilde{f}_r(u)$ is non-zero and therefore, $\widetilde{f}_r$ is an isomorphism.
\end{proof}

Let $N$ be a fully invariant subgroup of $F(X_n)$. The quotient group $F(X_n)/N$ is called the \emph{universal variety group} associated to $N$.

\begin{df}Let $G$ be a universal variety group, then the Lie algebra associated with the Johnson filtration on $\mathrm{IA}_n(G)$ is called the Johnson Lie algebra associated with group $G$ and is denoted by $\mathrm{gr}_{\ast}^{\mathrm{J}}\left(\mathrm{IA}_n(G)\right)$.
\end{df}

	Let $A$ be a subgroup of a group $B$ and an integral filtration $\{B_m\}$ naturally induces a filtration $\{A_m\}$ on $A$. One easily proves the following lemma:
\begin{lem}\label{3.2.4}
The inclusion $i:A\hookrightarrow B$ induces a monomorphism of Lie algebras 
\[i_{\ast}:\mathrm{gr}_{\ast}A\to\mathrm{gr}_{\ast}B.\]
\end{lem}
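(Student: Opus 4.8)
The plan is to reduce the statement to the standard fact that a subgroup of a free group is free, or rather to a purely algebraic statement about the associated graded, and then to check injectivity degree by degree. First I would fix the notation: write $B_m$ for the given integral filtration on $B$ and $A_m := A \cap B_m$ for the induced filtration on $A$, so that $A_1 = A \cap B = A$ and $[A_m, A_n] \subseteq [B_m, B_n] \cap A \subseteq B_{m+n} \cap A = A_{m+n}$, confirming that $\{A_m\}$ is indeed an integral filtration and that $i$ is filtered. The induced map on associated graded pieces $i_* : \mathrm{gr}_m A = A_m/A_{m+1} \to B_m/B_{m+1} = \mathrm{gr}_m B$ is then well defined, and it is a Lie algebra homomorphism because the bracket on $\mathrm{gr}_*$ is induced by the group commutator, which $i$ respects.

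The heart of the argument is injectivity of $i_*$ in each degree $m$. Suppose $a \in A_m$ represents an element of the kernel, i.e. $i(a) \in B_{m+1}$. But $i(a) = a$ as an element of $B$, so $a \in B_{m+1}$, and since $a \in A$ we get $a \in A \cap B_{m+1} = A_{m+1}$. Hence $a$ represents $0$ in $\mathrm{gr}_m A$. This shows $\ker i_* = 0$ on each graded piece, hence $i_*$ is a monomorphism of graded Lie algebras. The only subtlety worth spelling out is that the induced filtration one uses on $A$ must genuinely be $A_m = A \cap B_m$ (and not, say, the lower central series of $A$ itself, which in general sits inside $A \cap \Gamma^m B$ but need not equal it); with the intersection filtration the argument above is essentially a tautology, which is why the lemma is "easily proved."

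I do not anticipate a real obstacle here; the one place to be careful is making the statement precise, namely that "the filtration $\{B_m\}$ naturally induces a filtration $\{A_m\}$ on $A$" means exactly $A_m = A \cap B_m$, and then verifying the two filtration axioms for $\{A_m\}$ as above. Once that is in place, injectivity of $i_*$ is immediate from $A \cap B_{m+1} = A_{m+1}$, and compatibility with brackets follows from compatibility of $i$ with commutators. I would present it in three short steps: (1) $\{A_m\}$ is an integral filtration and $i$ is filtered; (2) $i_*$ is a well-defined graded Lie algebra map; (3) $\ker(i_*|_{\mathrm{gr}_m}) = 0$ by the intersection identity.
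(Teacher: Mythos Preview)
Your proof is correct and is precisely the natural argument the paper has in mind; the paper in fact omits the proof entirely, stating only that ``one easily proves'' the lemma, and your three-step outline (induced filtration $A_m = A \cap B_m$, compatibility with commutators, injectivity via $A \cap B_{m+1} = A_{m+1}$) is exactly how one fills in the details. The opening remark about subgroups of free groups is a red herring and plays no role, but you immediately discard it yourself, so no harm done.
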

As a special case, for a subgroup $G$ of $\mathrm{IA}_n$, the inclusion $i_G:G\hookrightarrow \mathrm{IA}_n$ induces a monomorphism of Lie algebra 
\[i_{G,\ast}:\mathrm{gr}_{\ast}^J(G)\to \mathrm{gr}_{\ast}^J(\mathrm{IA}_n).\]

The following lemma follows from Lemma \ref{3.2.4} and the fact that $\widetilde{J}_{n,\ast}$ is a monomorphism of Lie algebras:
\begin{lem}\label{3.2.6}
	For every subgroup $G$ of $\mathrm{IA}_n$, $\widetilde{J}_{G,\ast}$ is a monomorphism of Lie algebras.
\end{lem}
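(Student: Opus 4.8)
The statement is a formal consequence of the two facts quoted just before it, so the plan is simply to exhibit $\widetilde{J}_{G,\ast}$ as a composite of monomorphisms. First I would unwind the definitions: for a subgroup $G\subseteq\mathrm{IA}_n$ the group $G$ carries the Johnson filtration induced from $\mathrm{IA}_n$ (this is the filtration obtained by intersecting $G$ with the terms of the Johnson filtration of $\mathrm{IA}_n$, in the sense of Lemma~\ref{3.2.4}), and by construction $\widetilde{J}_{G,\ast}$ is nothing but the restriction of $\widetilde{J}_{n,\ast}$ to $\mathrm{gr}^J_\ast(G)$; that is,
\[
\widetilde{J}_{G,\ast}=\widetilde{J}_{n,\ast}\circ i_{G,\ast},
\]
where $i_{G,\ast}\colon\mathrm{gr}^J_\ast(G)\to\mathrm{gr}^J_\ast(\mathrm{IA}_n)$ is the map induced by the inclusion $i_G\colon G\hookrightarrow\mathrm{IA}_n$.

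Next I would invoke the special case of Lemma~\ref{3.2.4} recorded just above, which gives that $i_{G,\ast}$ is a monomorphism of Lie algebras for every subgroup $G$ of $\mathrm{IA}_n$. Combining this with the hypothesis that $\widetilde{J}_{n,\ast}$ is a monomorphism of Lie algebras, the composite $\widetilde{J}_{n,\ast}\circ i_{G,\ast}$ is again an injective Lie algebra homomorphism: a composite of Lie algebra maps is a Lie algebra map, and a composite of injective maps is injective. This is precisely the assertion of the lemma.

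There is essentially no real obstacle here beyond bookkeeping. The only point that needs care is confirming that the object named $\widetilde{J}_{G,\ast}$ is indeed defined as the above composite --- equivalently, that the filtration used to form $\mathrm{gr}^J_\ast(G)$ is the one induced from $\mathrm{IA}_n$, so that $i_{G,\ast}$ really is the map from Lemma~\ref{3.2.4} --- after which the argument is immediate. If one preferred a more hands-on route, one could instead take a nonzero class in $\mathrm{gr}^J_\ast(G)$, push it into $\mathrm{gr}^J_\ast(\mathrm{IA}_n)$ using injectivity of $i_{G,\ast}$, and then apply $\widetilde{J}_{n,\ast}$ (or even argue directly with Lemma~\ref{3.1.8}), but this merely re-derives the compositional argument.
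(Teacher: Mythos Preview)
Your proof is correct and matches the paper's own argument exactly: the paper states that the lemma ``follows from Lemma~\ref{3.2.4} and the fact that $\widetilde{J}_{n,\ast}$ is a monomorphism of Lie algebras,'' which is precisely the composition $\widetilde{J}_{G,\ast}=\widetilde{J}_{n,\ast}\circ i_{G,\ast}$ you wrote down. Your added remark about checking that the filtration on $G$ is the induced one is the only point of care, and it is handled by the setup preceding Lemma~\ref{3.2.4}.
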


The following lemma is proven in \cite{S}:
\begin{lem}\label{2.3.6}
Let $F(X)$ be the free group generated by a set $X$. The canonical map $X\to \mathrm{gr}_1^{\mathrm{LCS}}\left(F(X)\right)=F(X)/\left[F(X),F(X)\right]$ induces a natural isomorphism
\[\phi_{X,\ast}:L_{\mathbb{Z}}(X)\to\mathrm{gr}_\ast^{\mathrm{LCS}}\left(F(X)\right).\]
\end{lem}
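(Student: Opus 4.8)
The plan is to identify $\mathrm{gr}_*^{\mathrm{LCS}}\big(F(X)\big)$ with the free Lie algebra $L_{\mathbb{Z}}(X)$ by comparing both sides with the free associative algebra on $X$ through the Magnus embedding. First I would construct the comparison map: since $\mathrm{gr}_*^{\mathrm{LCS}}\big(F(X)\big)$ is a Lie algebra over $\mathbb{Z}$ and $L_{\mathbb{Z}}(X)$ is free on $X$, the assignment $x\mapsto x\,[F(X),F(X)]\in\mathrm{gr}_1^{\mathrm{LCS}}\big(F(X)\big)$ extends uniquely to a graded Lie algebra homomorphism $\phi_{X,*}\colon L_{\mathbb{Z}}(X)\to\mathrm{gr}_*^{\mathrm{LCS}}\big(F(X)\big)$. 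Surjectivity is immediate: $\Gamma^mF(X)/\Gamma^{m+1}F(X)$ is generated as an abelian group by the classes of left-normed commutators of weight $m$ in the elements of $X$, and each such class is the image under $\phi_{X,*}$ of the corresponding iterated Lie bracket.

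The substance is injectivity. Let $A$ be the ring of formal power series in the non-commuting variables $X$ over $\mathbb{Z}$, that is, the completion $\widehat{T}(V)=\prod_{m\geq 0}V^{\otimes m}$ of the tensor algebra $T(V)$ on the free $\mathbb{Z}$-module $V$ with basis $X$, and let $\widehat{I}=\prod_{m\geq 1}V^{\otimes m}$ be the ideal of series with vanishing constant term. I would use the Magnus homomorphism $\mu\colon F(X)\to A^\times$, $x\mapsto 1+x$, which is well defined since $1+x$ is a unit with inverse $1-x+x^2-\cdots$ and $F(X)$ is free on $X$. A short induction on $m$, based on the identity $[a,b]=1+a^{-1}b^{-1}(ab-ba)$ together with the fact that $ab-ba\in\widehat{I}^{p+q}$ whenever $a\in 1+\widehat{I}^{p}$ and $b\in 1+\widehat{I}^{q}$, shows that $\mu\big(\Gamma^mF(X)\big)\subseteq 1+\widehat{I}^{m}$. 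Hence $\mu$ induces for each $m$ a homomorphism of abelian groups $\mathrm{gr}_m^{\mathrm{LCS}}\big(F(X)\big)\to\widehat{I}^{m}/\widehat{I}^{m+1}\cong V^{\otimes m}$, $g\mapsto(\mu(g)-1)+\widehat{I}^{m+1}$, and the same computation (using $a^{-1}b^{-1}\equiv 1$ modulo $\widehat{I}$) shows that the resulting graded map $\bar\mu\colon\mathrm{gr}_*^{\mathrm{LCS}}\big(F(X)\big)\to\bigoplus_{m\geq 1}V^{\otimes m}\subseteq T(V)$ is a homomorphism of graded Lie algebras, where $T(V)$ carries the commutator bracket.

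Finally I would observe that the composite $\bar\mu\circ\phi_{X,*}\colon L_{\mathbb{Z}}(X)\to T(V)$ is a Lie algebra homomorphism sending each $x\in X$ to itself in $V=V^{\otimes 1}$, hence by the universal property of the free Lie algebra it coincides with the canonical map $L_{\mathbb{Z}}(X)\to T(V)$. This canonical map is injective: this is precisely the Magnus--Witt theorem, and equivalently it follows from the Poincar\'e--Birkhoff--Witt theorem, since $L_{\mathbb{Z}}(X)$ is free as a $\mathbb{Z}$-module and its universal enveloping algebra is $T(V)$, so $L_{\mathbb{Z}}(X)\hookrightarrow U\big(L_{\mathbb{Z}}(X)\big)=T(V)$. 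Injectivity of $\bar\mu\circ\phi_{X,*}$ forces $\phi_{X,*}$ to be injective, and together with the surjectivity from the first step we conclude that $\phi_{X,*}$ is an isomorphism; its naturality in $X$ is clear since every construction used is functorial in $X$.

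The main obstacle is the injectivity statement, concentrated in the input that the canonical map $L_{\mathbb{Z}}(X)\to T(V)$ is injective over $\mathbb{Z}$: one must either invoke the Magnus--Witt theorem as a black box or prove it directly via a Hall basis together with the Poincar\'e--Birkhoff--Witt argument, taking care that the proof used does not itself pass through the present lemma. Everything else — existence of $\phi_{X,*}$, surjectivity, the containment $\mu(\Gamma^mF(X))\subseteq 1+\widehat{I}^m$, and the multiplicativity of $\bar\mu$ — is routine.
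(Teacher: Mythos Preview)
The paper does not supply its own proof of this lemma; it simply cites Serre's \emph{Lie algebras and Lie groups} for the argument. Your proposal is correct and is precisely the classical Magnus--Witt proof that Serre gives: construct $\phi_{X,*}$ by freeness, get surjectivity from the fact that iterated commutators generate the graded quotients, and deduce injectivity by factoring the canonical embedding $L_{\mathbb{Z}}(X)\hookrightarrow T(V)$ through the Magnus map $\bar\mu$. Your caution about circularity is well placed but unnecessary here, since the injectivity of $L_{\mathbb{Z}}(X)\to T(V)$ can be obtained from PBW (via a Hall basis showing $L_{\mathbb{Z}}(X)$ is free as a $\mathbb{Z}$-module) or from the Dynkin--Specht--Wever idempotent, neither of which invokes the group-theoretic statement.
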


The following lemma given in \cite{Jin} will be used to prove Theorem \ref{1.1.14}:
\begin{lem}\label{3.2.7}
Let $n\geq r+1$ and let $G$ be the subgroup of $\mathrm{IA}_n$ generated by $C_r:=\{\chi_{i,j_1},\ldots,\chi_{i,j_r}\}$, then $G$ is a free group and the natural homomorphism	
\[\psi_{G,\ast}:\mathrm{gr}_\ast^{\mathrm{LCS}}(G)\to\mathrm{gr}_\ast^{\mathrm{J}}(G)\] is a monomorphism of Lie algebras. Let
\[\widetilde{J}_{G,\ast}:\mathrm{gr}_\ast^{\mathrm{J}}(G)\to\mathrm{Der}_{\ast}\left(L_{\mathbb{Z}}(X_n)\right)\] be the Johnson homomorphism of $G$. Then $\widetilde{J}_{G,\ast}\left(\psi_{G,\ast}\left(\mathrm{gr}_\ast^{\mathrm{LCS}}(G)\right)\right)$ is exactly the free sub-Lie algebra $S(\widetilde{C}_r,\mathbb{Z})$ in $\mathrm{Der}_\ast\left(L_{\mathbb{Z}}(X_n)\right)$.
\end{lem}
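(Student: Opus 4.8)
\noindent\emph{Proof proposal.} The plan is to route everything through the isomorphism $\widetilde{f}_r$ of Lemma \ref{3.2.3}. Let $q\colon F(Y_r)\twoheadrightarrow G$ be the surjection from the free group on $Y_r=\{y_1,\dots,y_r\}$ that sends $y_k\mapsto\chi_{i,j_k}$, and let $\bar q$ be the homomorphism of graded Lie algebras it induces on lower‑central‑series quotients; by Lemma \ref{2.3.6} its source is canonically $L_\mathbb{Z}(Y_r)$. The first step is to identify the composite
\[
L_\mathbb{Z}(Y_r)\xrightarrow{\bar q}\mathrm{gr}_\ast^{\mathrm{LCS}}(G)\xrightarrow{\psi_{G,\ast}}\mathrm{gr}_\ast^{\mathrm{J}}(G)\xrightarrow{\widetilde{J}_{G,\ast}}\mathrm{Der}_\ast
\]
with $\widetilde{f}_r$. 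All three arrows are homomorphisms of graded Lie algebras, so the composite is determined by its values on the degree‑one generators $y_k$; here $\psi_{G,\ast}$ carries the class of $\chi_{i,j_k}$ in $G/[G,G]$ to its class in the first Johnson quotient, and $\widetilde{J}_{G,\ast}$ sends that class to the linearization of $\chi_{i,j_k}$. From $\chi_{i,j_k}(x_i)=x_{j_k}^{-1}x_ix_{j_k}=x_i[x_i,x_{j_k}]$ and $\chi_{i,j_k}(x_\ell)=x_\ell$ for $\ell\neq i$ one reads off that this linearization is the derivation $\widetilde{\chi}_{i,j_k}$; hence the composite sends $y_k\mapsto\widetilde{\chi}_{i,j_k}$, and by freeness of $L_\mathbb{Z}(Y_r)$ it is exactly $\widetilde{f}_r$.

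Granting this, I would first show that $G$ is free of rank $r$. By Lemma \ref{3.2.3}, $\widetilde{f}_r$ is an isomorphism onto the free sub‑Lie algebra $S(\widetilde{C}_r,\mathbb{Z})$, so it is injective; since $\widetilde{f}_r=(\widetilde{J}_{G,\ast}\circ\psi_{G,\ast})\circ\bar q$, the map $\bar q$ is injective. As $F(Y_r)$ is residually nilpotent, Lemma \ref{3.1.8} then gives that $q$ is injective, hence an isomorphism, so $G\cong F(Y_r)$ is free on $C_r$. (This can also be seen directly: each $\chi_{i,j_k}$ fixes every $x_\ell$ with $\ell\neq i$ and conjugates $x_i$ by $x_{j_k}$, so every element of $G$ conjugates $x_i$ by a word in the free subgroup $F(x_{j_1},\dots,x_{j_r})\leq F(X_n)$ and fixes the remaining generators; since $n\geq r+1$ forces $i\notin\{j_1,\dots,j_r\}$, recording that conjugator identifies $G$ with $F(x_{j_1},\dots,x_{j_r})$.)

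Finally, since $q$, and hence $\bar q$, is an isomorphism, the relation $\widetilde{J}_{G,\ast}\circ\psi_{G,\ast}\circ\bar q=\widetilde{f}_r$ shows $\widetilde{J}_{G,\ast}\circ\psi_{G,\ast}=\widetilde{f}_r\circ\bar q^{-1}$ is an isomorphism onto $S(\widetilde{C}_r,\mathbb{Z})$. In particular $\psi_{G,\ast}$ is a monomorphism of Lie algebras, and because $\bar q$ is onto, $\widetilde{J}_{G,\ast}\bigl(\psi_{G,\ast}(\mathrm{gr}_\ast^{\mathrm{LCS}}(G))\bigr)$ equals the image of $\widetilde{f}_r$, namely $S(\widetilde{C}_r,\mathbb{Z})$, as asserted. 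The one step that genuinely needs care — and the main, if not deep, obstacle — is the identification in the first paragraph: verifying that $\widetilde{J}_{G,\ast}$ is bracket‑preserving and has the stated normalization $\chi_{i,j_k}\mapsto\widetilde{\chi}_{i,j_k}$ in degree one, so that the abstract free Lie algebra $\mathrm{gr}_\ast^{\mathrm{LCS}}(G)$ is carried onto $S(\widetilde{C}_r,\mathbb{Z})$ on the nose rather than merely mapped into $\mathrm{Der}_\ast$; once that is in place, the rest is formal manipulation of the resulting commutative diagram together with Lemmas \ref{3.1.8}, \ref{2.3.6} and \ref{3.2.3}.
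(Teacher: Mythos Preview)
Your argument is correct and is essentially the proof the surrounding lemmas were set up to enable: the paper does not actually supply a proof of this lemma but defers to \cite{Jin}, and the sequence of auxiliary results (Lemmas \ref{3.1.8}, \ref{2.3.6}, \ref{3.2.2}, \ref{3.2.3}) is arranged precisely so that one can run the argument you give---factor the Johnson map through $\widetilde f_r$, use its injectivity to force $\bar q$ and hence $q$ to be injective via residual nilpotence, and read off both the freeness of $G$ and the identification of the image. One small remark on your parenthetical: the fact that $i\notin\{j_1,\dots,j_r\}$ is not really ``forced by $n\ge r+1$'' but is already built into the definition of the generators $\chi_{i,j}$ (which require $i\neq j$); the hypothesis $n\ge r+1$ is simply what is needed for $r$ such generators with a common first index and pairwise distinct second indices to exist in $\mathrm{IA}_n$.
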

Let $L_n$ be a collection of $n$ unknotted, unlinked circles in $\mathbb{R}^3$, and let $P\Sigma_n$ be the group of motions of $L_n$ where each circle ends up back at its original position. The group $P\Sigma_n$ can be represented as a group of free group automorphisms; that is, $P\Sigma_n$ is the subgroup of $\mathrm{Aut}\left(F(X_n)\right)$ defined by
\[P\Sigma_n:=\{\alpha\in\mathrm{Aut}\left(F(X_n)\right)\mid \alpha(x_i)\text{ is conjugate to }x_i, i=1,\ldots,n\};\]
see \cite{JMM} for details.

A presentation of $P\Sigma_n$ given in the following theorem appeared in \cite{McCool}:

\begin{thm}\label{5.2.1}
A presentation of $P\Sigma_n$ is given by generators $\chi_{k,j} (k\neq j)$ with the following relations.
\begin{enumerate}
	\item For distinct $i,j,k$, $\chi_{i,j}\cdot\chi_{k,j}\cdot\chi_{i,k}=\chi_{i,k}\cdot\chi_{i,j}\cdot\chi_{k,j}$.
	\item If $\{j,k\}\cap\{s,t\}=\emptyset$, then $[\chi_{k,j},\chi_{s,t}]=1$,
	\item $[\chi_{k,j},\chi_{s,j}]=1$,
	\item For distinct $i,j,k$, $[\chi_{i,k},\chi_{i,j}]=[\chi_{i,k},\chi_{k,j}^{-1}]$.
\end{enumerate}
\end{thm}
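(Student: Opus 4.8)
This is McCool's presentation \cite{McCool}; here is how one would reprove it. The plan is to view $P\Sigma_n$ as the stabilizer, inside $\Aut(F(X_n))$, of the tuple of (oriented) conjugacy classes $([x_1],\dots,[x_n])$, and to extract a presentation by Whitehead's peak-reduction method in the form developed by McCool. First I would dispose of the easy half: that the relations (1)--(4) actually hold in $P\Sigma_n$. Using $\chi_{k,j}(x_i)=x_i$ for $i\neq k$ and $\chi_{k,j}(x_k)=x_j^{-1}x_kx_j$, one evaluates each side of each relation on the generators $x_1,\dots,x_n$ of $F(X_n)$ and compares reduced words; (2) and (3) are immediate from disjointness of the index sets involved, while (1) and (4) follow from a short computation with iterated conjugations.

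For the substantive half---that (1)--(4) generate the whole relation module---I would run peak reduction. Because $x_1,\dots,x_n$ are cyclic words of length one, the only Whitehead automorphisms fixing every $[x_i]$ are the partial conjugations $\chi_{A,j}$ (for $A\subseteq\{1,\dots,n\}\setminus\{j\}$) conjugating each $x_k$ with $k\in A$ by $x_j$; in particular Whitehead's theorem shows $P\Sigma_n$ is generated by these, hence by the $\chi_{k,j}$ since $\chi_{A,j}=\prod_{k\in A}\chi_{k,j}$ is a well-defined product by relation (3). McCool's algorithm then produces a finite presentation of the stabilizer whose relators are the ``visible'' identities among Whitehead automorphisms together with one relator per type of ``peak'' occurring along a length-preserving chain, each peak being lowerable modulo the others. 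Since a peak among partial conjugations involves multipliers from at most three letters $x_i,x_j,x_k$, the peak types are finite in number; eliminating the non-singleton $\chi_{A,j}$ by Tietze moves via $\chi_{A,j}=\prod_{k\in A}\chi_{k,j}$ rewrites every remaining relator as a word in the $\chi_{k,j}$, and one checks, type by type, that each such relator is a consequence of (1)--(4). Alternatively, one may let $P\Sigma_n$ act on the contractible McCullough--Miller complex of pointed forests, with finite stabilizers and finite quotient, and read the presentation off the quotient's $2$-skeleton, again simplifying by Tietze moves.

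The step I expect to be the main obstacle is exactly this completeness argument: showing that the raw presentation coming out of peak reduction (or out of the $2$-skeleton of the quotient complex) Tietze-reduces to precisely the four families (1)--(4), with no surplus relators and no generators beyond the $\chi_{k,j}$. This is a finite but bookkeeping-heavy analysis of how two or three partial conjugations compose along a geodesic; the delicate point is to arrange the three-letter peaks so that the asymmetric commutator identity (4), $[\chi_{i,k},\chi_{i,j}]=[\chi_{i,k},\chi_{k,j}^{-1}]$, emerges directly rather than as some longer word in the generators. A useful running sanity check is abelianization: every relator in (1)--(4) becomes trivial there, so the presentation abelianizes to the free abelian group on the $n(n-1)$ symbols $\chi_{k,j}$, which is the expected $H_1(P\Sigma_n)$; this guards against silently imposing more relations than intended.
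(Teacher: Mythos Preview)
The paper does not prove this theorem; it merely cites it from McCool \cite{McCool}. Your proposal sketches precisely McCool's own method---peak reduction among Whitehead automorphisms that stabilize the conjugacy classes $[x_1],\dots,[x_n]$, followed by Tietze simplification---so there is nothing in the paper to compare against, and your outline is the standard and correct route to the result.
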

The following lemma can be proved by induction on the degree of Lie monomials using Jacobi identity, while the base case is clear from Theorem \ref{5.2.1}:
\begin{lem}\label{3.2.1}
Let $u$ be a Lie monomial consisting of $\widetilde{\chi}_{i,j}$ and $\widetilde{\chi}_{i,j'}$ for distinct $i, j, j'$ in $\mathrm{Der}_\ast\left(L_{\mathbb{Z}}(X_n)\right)$ $(n\geq 3)$ and $\hat{u}$ be the Lie monomial obtained by replacing all $\widetilde{\chi}_{i,j'}$ with $-\widetilde{\chi}_{j,j'}$ in the expression of $u$. Then when $\mathrm{deg}(u)\geq 2$,
\[u=\hat{u}.\]
\end{lem}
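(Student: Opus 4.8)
The plan is to prove Lemma \ref{3.2.1} by induction on $\mathrm{deg}(u)$, taking as base case the degree-$2$ relation. At degree $2$, $u$ is (up to scalar) a bracket $[\widetilde{\chi}_{i,j}, \widetilde{\chi}_{i,j'}]$ with $i,j,j'$ distinct; I would verify directly from Theorem \ref{5.2.1}(4), which says $[\chi_{i,k},\chi_{i,j}] = [\chi_{i,k},\chi_{k,j}^{-1}]$ in $P\Sigma_n$, that the corresponding identity $[\widetilde{\chi}_{i,j'},\widetilde{\chi}_{i,j}] = [-\widetilde{\chi}_{j,j'},\widetilde{\chi}_{i,j}]$ holds among the derivations. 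Concretely one just evaluates both sides on each generator $x_k$ using Lemma \ref{3.2.2} (or a direct computation): the only nonzero value is on $x_i$, where one checks $[x_i,[x_i,x_{j'}]]$ equals $-[x_i,[x_j,x_{j'}]] + (\text{the contribution of }\widetilde{\chi}_{i,j})$, reorganized via the Jacobi identity. This is the ``base case is clear from Theorem \ref{5.2.1}'' assertion, and it is essentially a bookkeeping check with the explicit formulas for $\widetilde{\chi}$.

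For the inductive step, I would write a Lie monomial $u$ of degree $d \geq 3$ built from the letters $\widetilde{\chi}_{i,j}$ and $\widetilde{\chi}_{i,j'}$ as $u = [v, w]$ where $v,w$ are Lie monomials of strictly smaller degree in the same two letters, at least one of which has degree $\geq 1$. The substitution $u \mapsto \hat u$ (replace every $\widetilde{\chi}_{i,j'}$ by $-\widetilde{\chi}_{j,j'}$) is, by construction, a \emph{Lie algebra homomorphism} on the free Lie algebra in the two generating symbols, so $\hat u = [\hat v, \hat w]$. If both $v$ and $w$ have degree $\geq 2$, the induction hypothesis gives $v = \hat v$ and $w = \hat w$, hence $u = \hat u$ immediately. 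The genuine content is therefore concentrated in the case where one factor, say $w$, is a single letter: either $w = \widetilde{\chi}_{i,j}$ (which is its own image, $\hat w = w$, so one only needs $v = \hat v$ from the induction hypothesis provided $\deg v \geq 2$), or $w = \widetilde{\chi}_{i,j'}$, where $\hat w = -\widetilde{\chi}_{j,j'}$ is a \emph{new} letter not among the original two, and one must show $[\,\hat v, \widetilde{\chi}_{i,j'}\,] = [\,\hat v, -\widetilde{\chi}_{j,j'}\,]$, i.e. that $\hat v$ commutes with $\widetilde{\chi}_{i,j'} + \widetilde{\chi}_{j,j'}$ in $\mathrm{Der}_\ast(L_{\mathbb Z}(X_n))$ whenever $\deg \hat v \geq 2$. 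The low-degree cases where $\deg v = 1$ reduce to the base case or to trivial/degenerate brackets.

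The key computational lemma behind the step $w = \widetilde{\chi}_{i,j'}$ is that $[\widetilde{\chi}_{i,j}, \widetilde{\chi}_{i,j'} + \widetilde{\chi}_{j,j'}] = 0$ and more generally that any degree-$\geq 2$ bracket in $\widetilde{\chi}_{i,j}$ and $\widetilde{\chi}_{i,j'}$ annihilates, under the bracket, the derivation $\widetilde{\chi}_{i,j'} + \widetilde{\chi}_{j,j'}$; this is exactly where Theorem \ref{5.2.1} relations (2), (3), (4) together with the Jacobi identity get used, propagating the degree-$2$ identity upward. I would establish this by a parallel (nested) induction: expand $\hat v = [\hat v_1, \hat v_2]$ and apply the Jacobi identity
\[
[[\hat v_1,\hat v_2], z] = [[\hat v_1, z],\hat v_2] + [\hat v_1, [\hat v_2, z]], \qquad z := \widetilde{\chi}_{i,j'} + \widetilde{\chi}_{j,j'},
\]
reducing each term to lower degree, where the inductive hypothesis (and the base identities) make the inner brackets with $z$ vanish or transform controllably. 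The main obstacle I anticipate is precisely managing this nested induction cleanly: one must choose the decomposition $u = [v,w]$ so that the induction hypothesis applies to both factors or isolates a single letter, and one must keep careful track of which ``alphabet'' ($\{\widetilde{\chi}_{i,j},\widetilde{\chi}_{i,j'}\}$ versus $\{\widetilde{\chi}_{i,j},\widetilde{\chi}_{j,j'}\}$) each intermediate monomial lives in, since the substitution changes the generating set. Verifying that all the degenerate low-degree base cases are genuinely covered, and that the Jacobi expansions only ever invoke strictly smaller-degree instances, is the place where care is needed; the algebra itself, once organized, is routine given Theorem \ref{5.2.1} and Lemma \ref{3.2.2}.
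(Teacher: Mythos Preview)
Your plan is correct and follows essentially the same route the paper indicates: induction on the degree of the Lie monomial, with the Jacobi identity driving the inductive step and Theorem~\ref{5.2.1}(4) (together with relation~(3)) supplying the base case. Your ``nested induction'' can be streamlined by noting once and for all that $z=\widetilde{\chi}_{i,j'}+\widetilde{\chi}_{j,j'}$ commutes with each of the two generators $\widetilde{\chi}_{i,j}$ and $\widetilde{\chi}_{i,j'}$ (the first from the degree-$2$ base case, the second from $[\widetilde{\chi}_{i,j'},\widetilde{\chi}_{j,j'}]=0$), hence with the entire sub-Lie algebra they generate; this removes the need to track alphabets during the induction.
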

\begin{thm}\label{1.1.14}
Any two distinct elements in ${IA}_n$ either generate a free group or a free abelian group. More precisely, let $\chi_{i,j}$ and $\chi_{i',j'}$ be two distinct elements in ${IA}_n$. Then $\chi_{i,j}$ and $\chi_{i',j'}$ generate
\begin{enumerate}
	\item a free abelian group, when $\{i,j\}\cap\{i',j'\}=\emptyset$, or  $j=j'$ and $i\neq i'$.
	\item a free group, otherwise.
\end{enumerate}
\end{thm}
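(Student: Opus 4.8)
The plan is to split the analysis according to the combinatorial overlap of the index pairs $\{i,j\}$ and $\{i',j'\}$, and in each case either exhibit a free abelian quotient/subgroup or reduce to a free sub-Lie-algebra statement already established in Section~\ref{Calculation}. The key principle throughout is Lemma~\ref{3.1.8}: to prove that a subgroup $G\subseteq IA_n$ is free, it suffices (since $G$ is residually nilpotent, being a subgroup of $IA_n$ which is residually nilpotent) to show that the composite $\mathrm{gr}^{\mathrm{LCS}}_*(G)\to\mathrm{gr}^{\mathrm{J}}_*(G)\xrightarrow{\widetilde{J}_{G,*}}\mathrm{Der}_*(L_{\mathbb{Z}}(X_n))$ is injective and lands in a \emph{free} Lie subalgebra, because a group whose associated graded Lie algebra is a free Lie algebra on the images of a generating set must be free. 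Concretely, for the ``free group'' cases I would identify the image of the generators under the Johnson homomorphism with the derivations $\widetilde{\chi}_{i,j},\widetilde{\chi}_{i',j'}$ and invoke Lemma~\ref{3.2.3} (with $r=2$) together with Lemma~\ref{3.2.7} to conclude that $G$ is free of rank $2$.

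First I would dispose of the abelian cases. If $\{i,j\}\cap\{i',j'\}=\emptyset$, then $\chi_{i,j}$ and $\chi_{i',j'}$ manifestly commute: applying one then the other to any generator $x_k$ gives the same result, since $\chi_{i,j}$ only alters $x_i$ by conjugation by $x_j$ and leaves $x_{i'},x_{j'}$ fixed, and symmetrically. Hence $\langle\chi_{i,j},\chi_{i',j'}\rangle$ is abelian, and it is free abelian of rank $2$ because the two elements are independent in the abelianization of $IA_n$ (equivalently, their images $\widetilde{\chi}_{i,j},\widetilde{\chi}_{i',j'}$ are linearly independent in $\mathrm{Der}_2\cong\Hom_K(V_n,L^2(X_n))$, as $[x_i,x_j]$ and $[x_{i'},x_{j'}]$ are part of a basis of $L^2(X_n)$). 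For the case $j=j'$, $i\neq i'$, relation~(3) of Theorem~\ref{5.2.1} gives $[\chi_{i,j},\chi_{i',j}]=1$ directly, and again independence in the abelianization (here $[x_i,x_j]$ and $[x_{i'},x_j]$ are linearly independent) upgrades this to free abelian of rank $2$.

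For the ``free'' cases one must treat the several overlap patterns: (a) $i=i'$, $j\neq j'$; (b) $j=i'$ (so $\chi_{i,j},\chi_{j,j'}$, with the remaining indices possibly coinciding in the only allowed way $i=j'$ giving $\chi_{i,j},\chi_{j,i}$); (c) $i=j'$, $j\neq i'$; and the symmetric variants. In case~(a) the two generators are $\chi_{i,j_1},\chi_{i,j_2}$, and Lemma~\ref{3.2.7} applied with $r=2$ (legitimate once $n\geq 3$, which is forced since two distinct indices $j_1,j_2$ together with $i$ already require $n\geq 3$) says precisely that $G=\langle\chi_{i,j_1},\chi_{i,j_2}\rangle$ is free and that its image under the Johnson homomorphism is the free sub-Lie algebra $S(\widetilde{C}_2,\mathbb{Z})$ of Lemma~\ref{3.2.3}. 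For the remaining patterns I would use Lemma~\ref{3.2.1} to rewrite one of the two derivations: for instance $\chi_{i,j}$ and $\chi_{j,j'}$ have images whose Lie monomials, in degree $\geq 2$, coincide with those of $\widetilde{\chi}_{i,j}$ and $-\widetilde{\chi}_{i,j'}$ after the substitution of that lemma, reducing case~(b)/(c) to case~(a); combined with the base-case commutator relations from Theorem~\ref{5.2.1} this shows the associated graded Lie algebra of $G$ is free on two generators, whence $G$ is free of rank $2$ by Lemma~\ref{3.1.8}.

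The main obstacle is the bookkeeping in the ``free'' cases: one must verify that every overlap pattern not already declared abelian indeed falls — after the substitutions of Lemma~\ref{3.2.1} — into the shape covered by Lemmas~\ref{3.2.3} and~\ref{3.2.7}, i.e.\ that the two Johnson images generate a rank-two free Lie subalgebra, and separately that no relation among $\chi_{i,j},\chi_{i',j'}$ in $P\Sigma_n$ (the relations of Theorem~\ref{5.2.1}) survives to force a relation in the subgroup they generate. The delicate point is that Theorem~\ref{5.2.1} gives relations like $[\chi_{i,k},\chi_{i,j}]=[\chi_{i,k},\chi_{k,j}^{-1}]$ involving \emph{three} distinct indices, so one must check these never degenerate to a relation purely among two of the generators; the freeness-at-the-Lie-algebra-level argument via Lemma~\ref{3.1.8} is exactly what sidesteps this, since injectivity of $\widetilde{J}_{G,*}$ onto a free Lie algebra leaves no room for hidden relations.
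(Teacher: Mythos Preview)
Your overall strategy coincides with the paper's: dispose of the commuting cases directly, handle $i=i'$ via Lemma~\ref{3.2.7}, and reduce the mixed-index cases to that one by means of Lemma~\ref{3.2.1} together with Lemma~\ref{3.1.8}. The paper carries out exactly this reduction, phrasing it as a homomorphism $f$ from the already-known-free group $G=\langle\chi_{i,j},\chi_{i,j'}\rangle$ onto $H=\langle\chi_{i,j},\chi_{j,j'}\rangle$ and using the commutative diagram to see that $f_\ast^{\mathrm{LCS}}$ is injective.

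There is, however, one genuine gap in your case analysis. You fold the sub-case $i=j'$, $j=i'$ (that is, the pair $\chi_{i,j},\chi_{j,i}$) into case~(b) and then propose to treat all of~(b) and~(c) via Lemma~\ref{3.2.1}. But Lemma~\ref{3.2.1} is stated for \emph{three distinct} indices $i,j,j'$ and $n\ge 3$; when $j'=i$ there are only two indices in play and the substitution $\widetilde{\chi}_{i,j'}\rightsquigarrow -\widetilde{\chi}_{j,j'}$ is unavailable. Likewise Lemma~\ref{3.2.3} requires a common first index, so it does not show that $\widetilde{\chi}_{i,j}$ and $\widetilde{\chi}_{j,i}$ generate a free sub-Lie algebra. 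The paper handles this case separately by invoking Nielsen's classical theorem that $IA_2$ is free on $\chi_{1,2}$ and $\chi_{2,1}$; you need either to cite that result or to supply an independent argument that the Lie subalgebra generated by $\widetilde{\chi}_{i,j},\widetilde{\chi}_{j,i}$ in $\mathrm{Der}_\ast(L_{\mathbb{Z}}(X_n))$ is free of rank two.
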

\begin{proof}
When $\{i,j\}\cap\{i',j'\}=\emptyset$, or  $j=j'$ and $i\neq i'$, the conclusion is straightforward. Otherwise, there are a few possibilities:
\begin{enumerate}
	\item When $i=j'$ and $j=i'$, this case is essentially the same as $\chi_{1,2}$ and $\chi_{2,1}$ in $IA_2$. Nielsen \cite{Nielsen} proved that $IA_2$ is a free group generated by $\chi_{1,2}$ and $\chi_{2,1}$ and the result follows.
	\item When $i=i'$ and $j\neq j'$. When $i>\mathrm{max}\{j,j'\}$, it can be proved using upper triangular McCool group \cite{CPVW}.

	\item When $j=i'$ and $i\neq j'$, let $H$ and $G$ be the subgroups of $IA_n$ generated by $\{\chi_{i,j},\chi_{j,j'}\}$ and $\{\chi_{i,j},\chi_{i,j'}\}$, respectively. Let $f:G\to H$ be the group epimorphism defined by $f(\chi_{i,j})=\chi_{i,j}$ and $f(\chi_{i,j'})=\chi_{j,j'}^{-1}$.
	
Consider the following diagram, which is commutative for $\ast\geq 2$ by Lemma \ref{3.2.1}, where $\psi_{G,\ast}$ and $\psi_{H,\ast}$ are the natural homomorphisms from the LCS Lie algebras of $G$ and $H$ to the Johnson Lie algebras of $G$ and $H$, respectively; $\widetilde{J}_{G,\ast}$ and $\widetilde{J}_{H,\ast}$ are the Johnson homomorphism of $G$ and $H$, respectively.
$$
\begin{tikzcd}
&	gr^{LCS}_\ast (H) \arrow[dr,"\psi_{H,\ast} "] &\\
gr^{LCS}_\ast (G)  \arrow[ur,"f_\ast^{LCS}"]\arrow[d,"\psi_{G,\ast} "] &    & gr^{J}_\ast (H)  \arrow[d,"\widetilde{J}_{H,\ast}"]\\
gr_\ast^J(G) \arrow[rr,"\widetilde{J}_{G,\ast}"] &     &  \mathrm{Der}\left(L_{\mathbb{Z}}(X_n)\right)  \\
\end{tikzcd}
$$

By the lemmas \ref{3.2.6} and \ref{3.2.7}, the map $\widetilde{J}_{G,\ast}\circ\psi_{G,\ast}$ is a monomorphism of Lie algebras, which implies that $f_\ast^{\mathrm{LCS}}$ is a monomorphism when $\ast\geq 2$ while the case $\ast =1$ is obvious.
\item When $i=j'$ and $j\neq i'$, the proof is essentially the same as the previous case.
\end{enumerate}
Since 
$f_\ast^{\mathrm{LCS}}:\mathrm{gr}_\ast^{\mathrm{LCS}}(G)\to\mathrm{gr}_\ast^{\mathrm{LCS}}(H)$ is a monomorphism, so is $f$ and the proof is finished.

\end{proof}

Let $G$ be a universal variety group and $m\geq 2$, there is a monomorphism
\[J^m\mathrm{Aut}(G)/J^{m+1}\mathrm{Aut}(G)\to\Hom_{\mathbb{Z}}(G/[G,G],\Gamma^m G/\Gamma_{m+1} G).\]

When $G=F(X_n)$, this monomorphism gives a $\mathbb{Z}$-linear injection
\[\widetilde{J}_{n,m}:\mathrm{gr}_m^{\mathrm{J}}({IA}_n)\to\mathrm{Der}_{m+1}(L_{\mathbb{Z}}(X_n)).\]  

Therefore, one obtains a $\mathbb{Z}$-module monomorphism, called the Johnson homomorphism of $\mathrm{IA}_n$:

\[\widetilde{J}_{n,\ast}:\mathrm{gr}_\ast^{\mathrm{J}}({IA}_n)\to\bigoplus_{m=2}^{\infty}\mathrm{Der}_m\left(L_\mathbb{Z}(X_n)\right).\]

\section{The Schur Algebra on $V_n(K)^{\otimes q}$} \label{Schur}
 In this section we recall the definition of Schur algebra and we prove a few results regarding the Schur algebra on the tensor product of a finitely generated free $K$-module.

\begin{df}\label{defschur}
	The \textit{Schur algebra} on $V_n^{\otimes q}$ is the subalgebra of the algebra of endomorphisms on $V_n^{\otimes q}$ consisting of those $K$-linear maps $f$ which make the following diagram commute for all $\sigma\in\Sigma_q$,
\[
\begin{tikzcd}
V_n ^{\otimes q} \arrow[r, "f"] \arrow[d, "\sigma"]& V_n^{\otimes q}  \arrow[d, "\sigma" ] \\
V_n^{\otimes q}  \arrow[r,  "f" ]& V_n^{\otimes q}
\end{tikzcd}
\]
We denote the Schur algebra on $V_n^{\otimes q}$ by $\mathrm{Schur}(q,n,K)$. By convention we define $V_n^{\otimes 0}=K$.
\end{df}

The Schur algebra on $T(V_n)$ will be denoted by 
$\displaystyle{Schur(n,K)= \bigoplus_{q=0}^{\infty}  \mathrm{Schur}(q,n,K)}$, where  $T(V_n)=  \displaystyle{\bigoplus_{q=0}^{\infty} V_n^{\otimes q} }$ is the tensor algebra. A natural basis of $  V_n^{\otimes q}$ is 
\[
B_{n,q}:=\{ x_{i_1} \otimes \cdots \otimes x_{i_q} , \quad 1 \leq i_1, \ldots, i_q \leq n \}.
\]
We consider the following proper subset $S_{n,q}$ of $B_{n,q}$ given by 
\[
S_{n,q}:=\{ x_{i_1} \otimes \cdots \otimes x_{i_q} , \quad 1 \leq i_1 \leq  \ldots \leq  i_q \leq n \}.
\]

Let $\Sigma_q$ be the symmetric group on $q$ letters and for the $\Sigma_q$-action on $B_{n,q}$, the corresponding left $\Sigma_q$-orbit is given by
\[\mathscr{O}_{n,q}:=\{\sigma\cdot u\mid \sigma\in \Sigma_q \text{ and } u\in B_{n,q}\}=\{\sigma\cdot u\mid \sigma\in \Sigma_q \text{ and } u\in S_{n,q}\}.\]

\begin{df}\label{2.2.14}
	For a Lie monomial $u\in L_{X_n}^p$, let $\mathrm{deg}_{x_i}(u)$ be the number of occurrence of the letter $x_i$ in the expression of $u$. The multi-degree of $u$ is defined as 
	\[\mathrm{mdeg}(u)=\left(\mathrm{deg}_{x_1}(u),\ldots,\mathrm{deg}_{x_n}(u)\right).\]
\end{df}

Define $T^{\triangle}\left(V_n(K)\right)$ as the two-sided ideal generated by $V_n(K)$; that is,
\[T^{\triangle}\left(V_n(K)\right)=\bigoplus_{q=1}^{\infty}T^q\left(V_n(K)\right).\]
Let \[\mathfrak{b}_n:T^{\triangle}\left(V_n(K)\right)\to L_K(X_n)\]by
\[\mathfrak{b}_{n,2p}(x_{i_1}\otimes\cdots\otimes x_{i_p})=\mathrm{ad}(x_{i_1})\cdots\mathrm{ad}(x_{i_{p-1}})(x_{i_p})\text{,}\]
where $\mathfrak{b}_{n,2p}$ is the degree $2p$ component of the map $\mathfrak{b}_n$.
\begin{rmk}
	The map $\mathfrak{b}_n$ is essentially the standard map defined by Specht and Wever; see page 169 of \cite{Jacobson} and page 1197 of \cite{Cohen95} for details. In particular,  the classical Specht-Wever relation can be written as
	
	$$b_{n,2p}\circ  b_{n,2p} =  pb_{n,2p}.$$
\end{rmk}
The proof of the following proposition is straightforward:
\begin{pro}\label{4.1.1}
Let $u$ and $v$ be elements in $B_{n,q}$. Then \[\{\sigma\cdot u\mid \sigma\in \Sigma_q \}=\{\sigma\cdot v\mid \sigma\in \Sigma_q \}\]
if and only if
\[\mathrm{mdeg}\left(\mathfrak{b}_{n,2q}(u)\right)=\mathrm{mdeg}\left(\mathfrak{b}_{n,2q}(v)\right).\]
\end{pro}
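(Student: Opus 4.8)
The plan is to prove both directions by analyzing how the permutation action on $B_{n,q}$ interacts with the map $\mathfrak{b}_{n,2q}$. First I would record the elementary observation that $\Sigma_q$ acting on a basis element $u = x_{i_1}\otimes\cdots\otimes x_{i_q}$ only permutes the positions of the tensor factors and hence does not change the multiset $\{i_1,\ldots,i_q\}$ of indices; conversely two basis elements in the same $\Sigma_q$-orbit are exactly those obtained from one another by such a permutation, so $\{\sigma\cdot u\mid\sigma\in\Sigma_q\} = \{\sigma\cdot v\mid\sigma\in\Sigma_q\}$ holds if and only if $u$ and $v$ have the same multiset of indices, equivalently $\mathrm{mdeg}$ agrees when we regard $u,v$ as monomials $x_{i_1}\cdots x_{i_q}$ in the commuting variables.

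Next I would compute $\mathrm{mdeg}(\mathfrak{b}_{n,2q}(u))$ for $u = x_{i_1}\otimes\cdots\otimes x_{i_q}$. By definition $\mathfrak{b}_{n,2q}(u) = \mathrm{ad}(x_{i_1})\cdots\mathrm{ad}(x_{i_{q-1}})(x_{i_q})$, which is the iterated bracket $[x_{i_1},[x_{i_2},[\cdots,[x_{i_{q-1}},x_{i_q}]\cdots]]]$. Expanding this bracket as a $\mathbb{Z}$-linear combination of associative monomials, every monomial that appears uses precisely the letters $x_{i_1},\ldots,x_{i_q}$ in some order (a bracket $[a,b]=ab-ba$ preserves the multiset of letters in each term), so each term has the same multi-degree, namely $(\deg_{x_1},\ldots,\deg_{x_n})$ where $\deg_{x_j}$ counts how many of the $i_1,\ldots,i_q$ equal $j$. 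Moreover $\mathfrak{b}_{n,2q}(u)$ is a \emph{non-zero} element of $L^q(X_n)$ when $q\ge 1$ — this is exactly the content of the Specht--Wever setup, since $\mathfrak{b}_{n,2q}\circ\mathfrak{b}_{n,2q} = q\,\mathfrak{b}_{n,2q}$ shows $\mathfrak{b}_{n,2q}$ restricted to its image is (up to the unit $q$ over $\mathbb{Q}$) idempotent, and in any case the leading term $x_{i_1}\otimes\cdots\otimes x_{i_q}$ survives with coefficient $1$ — so $\mathrm{mdeg}(\mathfrak{b}_{n,2q}(u))$ is well-defined and equals the multi-degree of the monomial $x_{i_1}\cdots x_{i_q}$. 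This identifies $\mathrm{mdeg}(\mathfrak{b}_{n,2q}(u))$ with the tuple of index-multiplicities of $u$.

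Combining the two computations, both $\{\sigma\cdot u\mid\sigma\in\Sigma_q\} = \{\sigma\cdot v\mid\sigma\in\Sigma_q\}$ and $\mathrm{mdeg}(\mathfrak{b}_{n,2q}(u)) = \mathrm{mdeg}(\mathfrak{b}_{n,2q}(v))$ are each equivalent to the statement that $u$ and $v$ have the same multiset of tensor indices, so they are equivalent to each other, which is the claim. The only point requiring genuine care — and I expect it to be the main obstacle — is making precise that $\mathrm{mdeg}$ is applied to an element of the free Lie algebra rather than to a single monomial: one must check that every associative monomial occurring in the expansion of the iterated bracket $\mathfrak{b}_{n,2q}(u)$ has the \emph{same} multi-degree (so $\mathrm{mdeg}$ of the Lie element is unambiguous) and that the bracket does not vanish identically (so that $\mathrm{mdeg}$ is not vacuous); both follow from the fact that bracketing preserves the letter-multiset termwise together with the Specht--Wever non-degeneracy already recorded in the preceding remark. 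Everything else is bookkeeping with multisets.
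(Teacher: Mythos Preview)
Your argument is correct and is exactly the ``straightforward'' verification the paper leaves to the reader: both conditions amount to saying that $u$ and $v$ have the same multiset of tensor indices. One simplification: by Definition~\ref{2.2.14}, $\mathrm{mdeg}$ is defined on a Lie \emph{monomial} by counting letter occurrences in its formal bracket expression, and $\mathfrak{b}_{n,2q}(x_{i_1}\otimes\cdots\otimes x_{i_q})=[x_{i_1},[x_{i_2},[\cdots,x_{i_q}]\cdots]]$ is already a single Lie monomial, so $\mathrm{mdeg}$ reads off $(\#\{k:i_k=1\},\ldots,\#\{k:i_k=n\})$ directly from the expression---your concerns about termwise homogeneity and nonvanishing (via Specht--Wever) are unnecessary here.
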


For $u \in B_{n,q}$, let $\mathcal{I}_u$ be the stabilizer subgroup of $u$ under the action of $\Sigma_q$; that is, \[\mathcal{I}_u=\{g\in\Sigma_q\mid g\cdot u=u\}.\]

\begin{thm}\label{thm 4.1.2}
For any $v \in B_{n,q} \setminus S_{n,q}$, there is a unique $u \in S_{n,q}$ and some $\sigma \in \Sigma_q$ such that $v=\sigma(u)$.  If $f \in Sch(q,n,K)$ then $f$ satisfies the following two conditions
\begin{enumerate}

\item
$f(v)=\sigma(f(u))$ for any $\sigma \in \Sigma_q$ satisfying $v=\sigma(u), v \in B_{n,q} \setminus S_{n,q}$ and $u \in S_{n,q}$;

\item

Let $f(u)=\displaystyle{ \sum_{w \in B_{n,q}} a_w w} $ for $u \in S_{n,q}$, 
If $\mathcal{I}_u \cdot w= \mathcal{I}_u \cdot w'$, then $a_w=a_{w'}$. Or equivalently, the following is true for every $u\in S_{n,q}$:
\[f(u)=\sum_{\mathcal{I}_u\cdot w\in\mathcal{O}_u}\left(a_{\mathcal{I}_u\cdot w}\cdot\sum_{v\in B_{n,q}, \mathcal{I}_u\cdot v=\mathcal{I}_u\cdot w}v\right),\]where $a_{\mathcal{I}_u\cdot w}\in K$.
\end{enumerate}
Conversely, if $f\in\mathrm{End}\left(V_n^{\otimes q}\right)$ satisfies the above two conditions, then $f\in\mathrm{Schur}(q,n,K)$.
\end{thm}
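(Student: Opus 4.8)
The plan is to prove the three assertions in sequence: existence and uniqueness of the sorted representative $u$, the necessity of conditions (1) and (2) when $f\in\mathrm{Schur}(q,n,K)$, and their sufficiency. I would first record that $S_{n,q}$ is a complete and irredundant set of representatives for the $\Sigma_q$-orbits in $B_{n,q}$. Existence is immediate: permuting the tensor factors of $v=x_{i_1}\otimes\cdots\otimes x_{i_q}$ into weakly increasing order produces some $u\in S_{n,q}$ and $\sigma\in\Sigma_q$ with $v=\sigma(u)$. For uniqueness, a weakly increasing word is determined by the multiset of its entries, and two elements of one orbit share that multiset; equivalently, one may invoke Proposition~\ref{4.1.1}, since equal orbits force equal $\mathrm{mdeg}\bigl(\mathfrak{b}_{n,2q}(\cdot)\bigr)$. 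The permutation $\sigma$ is determined only modulo $\mathcal I_u$, but $u$ is unique.

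\emph{Necessity.} Let $f\in\mathrm{Schur}(q,n,K)$. Condition (1) is precisely the defining square of Definition~\ref{defschur} applied to a permutation realizing $v=\sigma(u)$: $f(v)=f(\sigma(u))=\sigma(f(u))$. For (2), apply that square to $g\in\mathcal I_u$ to get $g(f(u))=f(g(u))=f(u)$, so $f(u)$ is $\mathcal I_u$-invariant. Writing $f(u)=\sum_w a_w w$ and matching coefficients in $g(f(u))=\sum_w a_w\,g(w)$ yields $a_{g^{-1}\cdot w}=a_w$ for every $g\in\mathcal I_u$; hence $w\mapsto a_w$ is constant on $\mathcal I_u$-orbits, and regrouping the sum by those orbits is exactly the displayed identity.

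\emph{Sufficiency.} The substance is the converse. Given $f\in\End\bigl(V_n^{\otimes q}\bigr)$ satisfying (1) and (2), I must show $f\circ\sigma=\sigma\circ f$ for all $\sigma\in\Sigma_q$; since $B_{n,q}$ is a $K$-basis it suffices to verify $f(\sigma(v))=\sigma(f(v))$ for $v\in B_{n,q}$. Write $v=\tau(u)$ with $u\in S_{n,q}$ the sorted representative, so that $f(v)=\tau(f(u))$ by (1) (trivially if $v\in S_{n,q}$), and $\sigma(v)=(\sigma\tau)(u)$. If $\sigma\tau\in\mathcal I_u$, then $\sigma(v)=u\in S_{n,q}$ and (2) gives $(\sigma\tau)(f(u))=f(u)$, whence $\sigma(f(v))=(\sigma\tau)(f(u))=f(u)=f(\sigma(v))$; otherwise $\sigma(v)\in B_{n,q}\setminus S_{n,q}$, and applying (1) to the relation $\sigma(v)=(\sigma\tau)(u)$ gives $f(\sigma(v))=(\sigma\tau)(f(u))=\sigma(f(v))$. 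A cleaner packaging, which I would likely adopt in the write-up, is to observe that (1) and (2) together say exactly that $f$ is \emph{the} $\Sigma_q$-equivariant endomorphism whose restriction to $S_{n,q}$ is a prescribed $\mathcal I_u$-invariant family $\{f(u)\}_{u\in S_{n,q}}$: invariance is precisely what makes the rule $\sigma(u)\mapsto\sigma(f(u))$ well defined on each orbit, and (1) identifies $f$ with that equivariant extension off $S_{n,q}$.

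I expect no serious obstacle. The one place demanding care is the bookkeeping with the side of the $\Sigma_q$-action — in particular the fact that $\sigma(u)=\sigma'(u)$ with $u\in S_{n,q}$ forces $\sigma^{-1}\sigma'\in\mathcal I_u$ (or its mirror image, according to the chosen convention) — so that the well-definedness step in the tidy packaging, equivalently the exhaustiveness of the two cases above, is watertight.
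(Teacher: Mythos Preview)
Your proof is correct and follows essentially the same strategy as the paper: necessity is read off directly from the defining equivariance, and sufficiency is a case analysis on basis elements. Your sufficiency argument is organized a bit more economically---two cases according to whether $\sigma\tau\in\mathcal I_u$, versus the paper's four cases split by whether $\sigma(w)=w$ and whether $w\in S_{n,q}$---but the content is the same.
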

\begin{proof}
When  $f \in Sch(q,n,K)$ then $f$, then $f$ satisfies the first condition by Definition \ref{defschur} and we shall prove the second condition. Assuming $\mathcal{I}_u \cdot w= \mathcal{I}_u \cdot w'$, let $\sigma$ be an element in $\mathcal{I}_u$ such that $\sigma\cdot w=w'$. Then
\begin{align*} 
\sum_{w \in B_{n,q}} a_w w&=  f(u)\\ 
	&=  f(\sigma \cdot u)\\
	&=\sigma\cdot f(u)\\
	&=\sigma\cdot \sum_{w \in B_{n,q}} a_w w\\
	&=\sum_{w \in B_{n,q}} a_w \sigma \cdot w
\end{align*}
Since $\sigma\cdot w=w'$, we conclude that $a_w=a_{w'}$.

Now we prove condition (1) and (2) imply $f\in \mathrm{Sch}(q,n,K)$. It suffices to prove 
\[f\left(\sigma(w)\right)=\sigma\left(f(w)\right)\] for every $w\in B_{n,q}$ and $\sigma\in\Sigma_q$.

\paragraph{Case 1} When $\sigma(w)=w$ and $w\in S_{n,q}$, condition (2) implies $f\left(\sigma(w)\right)=\sigma\left(f(w)\right)$.  We write $w=\sum_{\theta\in B_{n,q}}a_\theta \theta$. Therefore, $\sigma\cdot f(w)=\sum_{\theta\in B_{n,q}}a_\theta \sigma\cdot\theta$. 
	 
	Since $\mathcal{I}_w\cdot (\sigma\cdot\theta)=\mathcal{I}_w\cdot \theta$ (Notice if $h\in I_w$, so is $h\cdot \sigma$), $a_{\sigma\cdot\theta}=a_\theta$, so $\sigma\cdot f(w)=\sum_{\theta\in B_{n,q}}a_\theta \sigma\cdot\theta=\sum_{\theta\in B_{n,q}}a_\theta\theta$.

\paragraph{Case 2}
When $\sigma(w)=w$ and $w\notin S_{n,q}$, condition (1)  implies $f\left(\sigma(w)\right)=\sigma\left(f(w)\right)$. 

\paragraph{Case 3}When $\sigma(w)\neq w$ and $w\in S_{n,q}$. In this case $\sigma\cdot w\in B_{n,q}\setminus S_{n,q}$ and the conclusion is clear by condition (1).
\paragraph{Case 4}When $\sigma(w)\neq w$ and $w\notin S_{n,q}$.
therefore exists a unique $u\in S_{n,q}$ and some $\sigma'\in\Sigma_q$ such that $w=\sigma'(u)$.

\begin{enumerate}
	\item When $\sigma(w)\in S_{n,q}$, then condition (1) implies
	\[f(w)=\sigma^{-1}(f(\sigma(w))),\] and the result follows.
	\item  When $\sigma(w)\in B_{n,q}\setminus S_{n,q}$, then there are $u\in S_{n,q}$ and $\sigma'\in\Sigma_q$ such that $w=\sigma'(u)$. we obtain from condition (1) that 
	$f(w)=\sigma'\left(f(u)\right)$
	and a direct calculation shows that
	$f\left(\sigma(w)\right)=f(\sigma(\sigma'(u)))=\sigma(\sigma'(f(u)))\sigma\left(f(w)\right)$.
\end{enumerate} 
\end{proof}
The following theorem gives a necessary and sufficient condition when a map from $S_{n,q}$ to $V_n^{\otimes q}$ can be extended to a $\Sigma_q$-linear map.

\begin{thm}\label{thm 4.1.3}
A map $g$ from $S_{n,q}$ to $V_n^{\otimes q}$ can be extended to a $\Sigma_q$-linear map if and only if $g$ satisfies condition (2) in Theorem \ref{thm 4.1.2}.
\end{thm}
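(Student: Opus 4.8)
The plan is to reduce the statement to Theorem \ref{thm 4.1.2}, which already characterizes which endomorphisms of $V_n^{\otimes q}$ lie in $\mathrm{Schur}(q,n,K)$ by conditions (1) and (2). The key observation is that a $\Sigma_q$-linear map $\tilde g\colon V_n^{\otimes q}\to V_n^{\otimes q}$ is exactly the same datum as a $K$-linear map $V_n^{\otimes q}\to V_n^{\otimes q}$ commuting with all $\sigma\in\Sigma_q$, and that such a map is determined by its values on a set of $\Sigma_q$-orbit representatives in $B_{n,q}$ — and $S_{n,q}$ is precisely such a set of representatives (by the uniqueness clause at the start of Theorem \ref{thm 4.1.2}, every $v\in B_{n,q}$ equals $\sigma(u)$ for a unique $u\in S_{n,q}$ and some $\sigma$). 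So the content is: which set maps $g\colon S_{n,q}\to V_n^{\otimes q}$ are the restrictions of $\Sigma_q$-linear maps?

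First I would prove the ``only if'' direction. Suppose $g=\tilde g|_{S_{n,q}}$ for a $\Sigma_q$-linear $\tilde g$. Then for $u\in S_{n,q}$ and any $\sigma\in\mathcal I_u$ we have $\tilde g(u)=\tilde g(\sigma\cdot u)=\sigma\cdot\tilde g(u)$, so writing $\tilde g(u)=\sum_{w}a_w w$ and comparing coefficients after applying $\sigma$, we get $a_w=a_{\sigma\cdot w}$ for all $\sigma\in\mathcal I_u$; this is exactly condition (2). (This is the same computation as the first displayed align in the proof of Theorem \ref{thm 4.1.2}, restricted to stabilizer elements.)

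For the ``if'' direction, assume $g$ satisfies condition (2). I would define $\tilde g$ on the basis $B_{n,q}$ by choosing, for each $v\in B_{n,q}$, the unique $u\in S_{n,q}$ with $v=\sigma(u)$, and setting $\tilde g(v):=\sigma(g(u))$. The first thing to check — and this is the main obstacle — is that $\tilde g$ is \emph{well-defined}: if $v=\sigma(u)=\sigma'(u)$ for two choices of $\sigma$ (which forces $\sigma^{-1}\sigma'\in\mathcal I_u$), then I must verify $\sigma(g(u))=\sigma'(g(u))$, i.e.\ that $g(u)$ is fixed by $\mathcal I_u$ — and that is precisely condition (2). Once well-definedness is established, $\Sigma_q$-linearity is a routine check: for arbitrary $\tau\in\Sigma_q$ and $v=\sigma(u)$ with $u\in S_{n,q}$, we have $\tau(v)=(\tau\sigma)(u)$, so by definition $\tilde g(\tau(v))=(\tau\sigma)(g(u))=\tau(\sigma(g(u)))=\tau(\tilde g(v))$; extend $K$-linearly. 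Finally $\tilde g|_{S_{n,q}}=g$ since for $u\in S_{n,q}$ one may take $\sigma=\mathrm{id}$. This completes the equivalence, and I expect the only genuinely delicate point to be organizing the well-definedness argument cleanly — in particular making sure the ``unique $u$, non-unique $\sigma$'' bookkeeping is handled, which is exactly where condition (2) is used and where one connects back to the $\mathcal I_u$-orbit reformulation displayed in Theorem \ref{thm 4.1.2}.
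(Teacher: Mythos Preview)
Your proposal is correct and follows essentially the same route as the paper: both directions are handled the same way, with the extension $\tilde g(v)=\sigma(g(u))$ and well-definedness coming from condition (2) via the stabilizer $\mathcal I_u$. The only cosmetic difference is that for the final step the paper observes that $\tilde g$ satisfies conditions (1) and (2) and then invokes the converse of Theorem \ref{thm 4.1.2}, whereas you verify $\Sigma_q$-linearity directly from $\tilde g(\tau v)=(\tau\sigma)(g(u))=\tau(\tilde g(v))$; these are the same argument in slightly different packaging.
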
 
\begin{proof}
	By Theorem \ref{thm 4.1.2}, we only need to prove the case when $g$ satisfies condition (2) in Theorem \ref{thm 4.1.2}, it can be extended to a $\Sigma_q$-linear map.
	
We extend $g$ to $B_{n,q}\setminus S_{n,q}$ in the following manner: for each $v\in B_{n,q}\setminus S_{n,q}$, there exists a unique $u\in S_{n,q}$ and some $\sigma\in\Sigma_q$ such that $v=\sigma(u)$. Define $f(v)=\sigma\left(g(u)\right)$. Condition (2) in Theorem \ref{thm 4.1.2} guarantees that $f$ is well-defined. When $u\in S_{n,q}$, define $f(u)=g(u)$. So we have a well-defined map $f:B_{n,q}\to B_{n,q}$ which extends $g$. Therefore we have an extension (still denoted by $f$) of $g$ from $V_n^{\otimes q}$ to itself. By the construction of $f$, it satisfies the two conditions in Theorem \ref{thm 4.1.2} and is therefore $\Sigma_q$-linear.
\end{proof}
When the extension Theorem \ref{thm 4.1.3} exists, it is clearly unique. The following corollary is proved in essentially the same manner.
\begin{cor}\label{cor 4.1.4}
A function from $R_{n,q}$, a set of representatives of the $\Sigma_q$-orbits in $B_{n,q}$, to $V_n^{\otimes q}$ can be extended to a $\Sigma_q$-linear map if and only if for every $u\in R_{n,q}$,\[g(u)=\sum_{\mathcal{I}_u\cdot w\in\mathcal{O}_u}\left(a_{\mathcal{I}_u\cdot w}\cdot\sum_{v\in B_{n,q}, \mathcal{I}_u\cdot v=\mathcal{I}_u\cdot w }v\right),\]where $a_{\mathcal{I}_u\cdot w}\in K$. 
\end{cor}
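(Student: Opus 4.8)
The plan is to mimic the proof of Theorem \ref{thm 4.1.3}, replacing the distinguished transversal $S_{n,q}$ (the monotone tensors) by an arbitrary transversal $R_{n,q}$ of the $\Sigma_q$-orbits in $B_{n,q}$. The only feature of $S_{n,q}$ used in the proof of Theorem \ref{thm 4.1.3} was that it selects exactly one representative from each $\Sigma_q$-orbit; the explicit monotonicity played no role in the extension construction itself. So first I would record that, for every $v \in B_{n,q}$, there is a unique $u \in R_{n,q}$ lying in the same orbit as $v$, and hence some $\sigma \in \Sigma_q$ with $v = \sigma(u)$ (this $\sigma$ need not be unique, being determined only up to the stabilizer $\mathcal{I}_u$).

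Next, assuming $g \colon R_{n,q} \to V_n^{\otimes q}$ satisfies the stated condition for each $u \in R_{n,q}$ — i.e. $g(u)$ lies in the $K$-span of the sums $\sum_{\mathcal{I}_u \cdot v = \mathcal{I}_u \cdot w} v$ over the $\mathcal{I}_u$-orbits in $B_{n,q}$ — I would define the extension $f \colon B_{n,q} \to V_n^{\otimes q}$ by $f(u) = g(u)$ for $u \in R_{n,q}$ and $f(v) = \sigma(g(u))$ whenever $v = \sigma(u)$ with $u \in R_{n,q}$. The key point is well-definedness: if $v = \sigma(u) = \sigma'(u)$, then $\tau := \sigma^{-1}\sigma' \in \mathcal{I}_u$, and I must check $\sigma(g(u)) = \sigma'(g(u))$, i.e. $g(u) = \tau(g(u))$ for all $\tau \in \mathcal{I}_u$. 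This is exactly what condition (2) buys us: each basis sum $\sum_{\mathcal{I}_u \cdot v = \mathcal{I}_u \cdot w} v$ is a full $\mathcal{I}_u$-orbit sum, hence fixed by every $\tau \in \mathcal{I}_u$, so any $K$-linear combination of such sums — in particular $g(u)$ — is $\mathcal{I}_u$-invariant. Extending $f$ $K$-linearly to $V_n^{\otimes q}$, it remains to verify $f(\rho(w)) = \rho(f(w))$ for all $w \in B_{n,q}$ and $\rho \in \Sigma_q$. Writing $w = \sigma(u)$ with $u \in R_{n,q}$, one has $\rho(w) = (\rho\sigma)(u)$, so $f(\rho(w)) = (\rho\sigma)(g(u)) = \rho(\sigma(g(u))) = \rho(f(w))$ by definition; this is a one-line check once well-definedness is in hand. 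Conversely, any $\Sigma_q$-linear $f$ restricts to a $g$ on $R_{n,q}$, and applying Theorem \ref{thm 4.1.2}(2) to the unique monotone representative in each orbit and transporting by the group action (using the $\Sigma_q$-equivariance of $f$) forces $g(u)$ to have the displayed form for every $u \in R_{n,q}$; alternatively one observes directly that $f(u)$ must be $\mathcal{I}_u$-invariant, which is equivalent to the stated shape.

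I do not expect a genuine obstacle here — the statement is essentially a reformulation of Theorem \ref{thm 4.1.3} with a cosmetic change of transversal, and the paper's own phrasing (``proved in essentially the same manner'') signals this. The one spot demanding a little care is the well-definedness of $f$ on non-representatives, since the element $\sigma$ with $v = \sigma(u)$ is ambiguous up to $\mathcal{I}_u$; but as noted this ambiguity is killed precisely by the $\mathcal{I}_u$-invariance encoded in condition (2), so even that step is routine.
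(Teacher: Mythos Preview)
Your proposal is correct and follows essentially the same approach as the paper: the paper simply remarks that the corollary ``is proved in essentially the same manner'' as Theorem~\ref{thm 4.1.3}, and your argument is precisely that adaptation, defining $f(v)=\sigma(g(u))$ for $v=\sigma(u)$ with $u\in R_{n,q}$ and using the $\mathcal{I}_u$-invariance encoded in condition~(2) to secure well-definedness. If anything, your write-up is more explicit than the paper's own treatment, particularly in spelling out the well-definedness check and the direct verification of $\Sigma_q$-equivariance.
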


Consider the canonical embedding
\[I_n:L(X_n)\to T(V_n)\] and its restriction \[I_{n,q}:L^q(X_n)\to T^q(V_n).\] 
\begin{df}\label{def 4.1.8}
	Let $\mathrm{Br}_q$ be the set of Lie bracketing of $Lie$ monomials of degree $2q$, where the degree is doubled as in Remark \ref{doubling}. The the embedding $I_{n,q}$ of $L^q(X_n)$ into $T^q(V_n)$ induces a mapping, called the $q$-th bracketing function, $\mathrm{br}_q:\mathrm{Br}_q\to K(\Sigma_q)$ such that for every Lie monomial $[x_{i_1},\cdots, x_{i_q}]$,
	\[I_{n,q}\left([x_{i_1},\cdots,x_{i_q}]\right)=\mathrm{br}_q\left(\llbracket [x_{i_1},\cdots,x_{i_q}]\rrbracket\right)(x_{i_1}\otimes\cdots\otimes x_{i_q}).\]
\end{df}

\begin{ex}\label{ex 4.1.7}
\begin{align*}
I_{n,3}\left(\left[[x_i,x_j],x_k\right]\right)=&[x_i\otimes x_j-x_j\otimes x_i,x_k]\\
=&(x_i\otimes x_j-x_j\otimes x_i)\otimes x_k-x_k\otimes (x_i\otimes x_j-x_j\otimes x_i)\\
=&x_i\otimes x_j\otimes x_k-x_j\otimes x_i\otimes x_k-x_k\otimes x_i\otimes x_j+x_k\otimes x_j\otimes x_i\\
=&1(x_i\otimes x_j\otimes x_k)-(12)(x_i\otimes x_j\otimes x_k)\\
&-(123)(x_i\otimes x_j\otimes x_k)+(13)(x_i\otimes x_j\otimes x_k)\\
=&\left(1-(12)-(123)+(13)\right)(x_i\otimes x_j\otimes x_k)
\end{align*}
Therefore, 
\[\mathrm{br}_3\left(\left[[,],\right]\right)=1-(12)-(123)+(13)\in K(\sigma_3).\]
Similarly, 
\[\mathrm{br}_3\left(\left[,[,]\right]\right)=1-(23)-(132)+(13)\in K(\sigma_3).\]
\end{ex}

Every element $f\in\mathrm{Schur}(q,n,K)$ sends Lie elements to Lie elements, as is shown in the following theorem.
\begin{thm}\label{4.1.10}
	For any $f\in\mathrm{Schur}(q,n,K)$ and Lie monomial $[x_{i_1},\cdot,x_{i_q}]$, we have 
\[f\left([x_{i_1},\cdot,x_{i_q}]\right)=\sum_{\mathcal{I}_u\cdot w\in\mathcal{O}_u}\left(a_{\mathcal{I}_u\cdot w}\cdot\sum_{v\in B_{n,q}, \mathcal{I}_u\cdot v=\mathcal{I}_u\cdot w}[v]\right),\]
where $u=x_{i_1}\otimes\cdots\otimes x_{i_q}$, $a_{\mathcal{I}_u\cdot w}\in K$ and all the Lie monomials have the same bracketing. 

Let the cardinality of $\mathcal{O}_u$ be denoted by $d_u$. Fix $u$ and let $f$ run through $\mathrm{Schur}(q,n,K)$, then the $d_u$-tuple $(a_{\mathrm{I}_u\cdot w})_{\mathrm{I}_u\cdot w\in\mathcal{O}_u}$	runs through $K^{d_u}$.
\end{thm}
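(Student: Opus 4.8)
The plan is to combine two facts: the equivariance built into the definition of $\mathrm{Schur}(q,n,K)$, and the observation that the bracketing function $\mathrm{br}_q$ of Definition~\ref{def 4.1.8} depends only on the bracketing pattern, not on the letters inserted into it. First I would fix the Lie monomial $[x_{i_1},\cdots,x_{i_q}]$, let $\mathfrak{b}=\llbracket [x_{i_1},\cdots,x_{i_q}]\rrbracket\in\mathrm{Br}_q$ be its bracketing pattern, set $r=\mathrm{br}_q(\mathfrak{b})\in K(\Sigma_q)$ and $u=x_{i_1}\otimes\cdots\otimes x_{i_q}$, so that $I_{n,q}\big([x_{i_1},\cdots,x_{i_q}]\big)=r\cdot u$. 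Since $I_{n,q}$ is obtained by formally expanding each bracket $[a,b]$ as $a\otimes b-b\otimes a$ in the tensor algebra — a manipulation of positions that ignores which letters occupy them, and in particular tolerates repetitions — the same group-algebra element $r$ computes $I_{n,q}$ of the Lie monomial with pattern $\mathfrak{b}$ built from any other sequence of letters; that is, for every $v=x_{j_1}\otimes\cdots\otimes x_{j_q}\in B_{n,q}$ one has $r\cdot v=I_{n,q}\big(\mathfrak{b}(x_{j_1},\ldots,x_{j_q})\big)=:[v]$, a Lie monomial with the same bracketing. (This is exactly the pattern illustrated in Example~\ref{ex 4.1.7}.) A second, routine remark: by $K$-linearity the square in Definition~\ref{defschur} forces $f(r'\cdot x)=r'\cdot f(x)$ for every $r'\in K(\Sigma_q)$ and $x\in V_n^{\otimes q}$, so $f$ is $K(\Sigma_q)$-equivariant.

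With these in hand the displayed formula is immediate. By Theorem~\ref{thm 4.1.2}(2), $f(u)$ has the form $\sum_{\mathcal{I}_u\cdot w\in\mathcal{O}_u}a_{\mathcal{I}_u\cdot w}\sum_{v\in B_{n,q},\,\mathcal{I}_u\cdot v=\mathcal{I}_u\cdot w}v$ with $a_{\mathcal{I}_u\cdot w}\in K$; when $u\notin S_{n,q}$ one first applies Theorem~\ref{thm 4.1.2} to the unique $u_0\in S_{n,q}$ in the $\Sigma_q$-orbit of $u$, writes $u=\sigma(u_0)$ and $f(u)=\sigma\big(f(u_0)\big)$, and transports the orbit decomposition along $\mathcal{I}_u=\sigma\mathcal{I}_{u_0}\sigma^{-1}$, under which the coefficients pass over unchanged. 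Applying the $K(\Sigma_q)$-equivariance of $f$ and then the remark on $r$,
\begin{align*}
f\big([x_{i_1},\cdots,x_{i_q}]\big)&=f(r\cdot u)=r\cdot f(u)\\
&=\sum_{\mathcal{I}_u\cdot w\in\mathcal{O}_u}a_{\mathcal{I}_u\cdot w}\sum_{\substack{v\in B_{n,q}\\ \mathcal{I}_u\cdot v=\mathcal{I}_u\cdot w}}(r\cdot v)\\
&=\sum_{\mathcal{I}_u\cdot w\in\mathcal{O}_u}a_{\mathcal{I}_u\cdot w}\sum_{\substack{v\in B_{n,q}\\ \mathcal{I}_u\cdot v=\mathcal{I}_u\cdot w}}[v],
\end{align*}
which is the asserted identity; each $[v]$ carries the bracketing $\mathfrak{b}$, and since the right-hand side is a $K$-linear combination of Lie monomials, this also recovers the statement preceding the theorem that $f$ sends Lie elements to Lie elements.

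For the surjectivity claim I would invoke Corollary~\ref{cor 4.1.4}. Given an arbitrary tuple $(a_{\mathcal{I}_u\cdot w})_{\mathcal{I}_u\cdot w\in\mathcal{O}_u}\in K^{d_u}$, choose a set $R_{n,q}$ of representatives of the $\Sigma_q$-orbits in $B_{n,q}$ that contains $u$ itself, and define $g\colon R_{n,q}\to V_n^{\otimes q}$ by
\[
g(u)=\sum_{\mathcal{I}_u\cdot w\in\mathcal{O}_u}a_{\mathcal{I}_u\cdot w}\sum_{\substack{v\in B_{n,q}\\ \mathcal{I}_u\cdot v=\mathcal{I}_u\cdot w}}v,\qquad g(u')=0\ \text{ for }u'\in R_{n,q}\setminus\{u\}.
\]
Each $g(u')$ has precisely the shape required in Corollary~\ref{cor 4.1.4}, so $g$ extends to a $\Sigma_q$-linear endomorphism $f$ of $V_n^{\otimes q}$, i.e.\ an element of $\mathrm{Schur}(q,n,K)$, with $f(u)=g(u)$. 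Feeding this $f$ into the formula just proved shows that the coefficient tuple occurring in it — which is just the tuple of coordinates of $f(u)$ in the basis $B_{n,q}$, grouped over $\mathcal{I}_u$-orbits, hence genuinely determined by $f$ — is exactly $(a_{\mathcal{I}_u\cdot w})$. As the tuple was arbitrary, $f\mapsto(a_{\mathcal{I}_u\cdot w})$ maps onto $K^{d_u}$.

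The only genuinely delicate point is the letter-independence of $\mathrm{br}_q$ and the claim that it interacts correctly with $f$ even when the letters $x_{i_1},\ldots,x_{i_q}$ repeat: one must be sure that passing from the basis expansion $f(u)=\sum_w a_w w$ in $V_n^{\otimes q}$ to the ``Lie'' expansion $\sum_w a_w[w]$ neither creates nor destroys relations. This is immediate here because $r\cdot(-)$ is applied termwise to a fixed $K$-basis, so no cancellation is introduced; everything else is a bookkeeping consequence of Theorems~\ref{thm 4.1.2} and~\ref{thm 4.1.3} and Corollary~\ref{cor 4.1.4}.
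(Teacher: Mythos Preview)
Your proof is correct and follows essentially the same route as the paper: both compute $f([x_{i_1},\cdots,x_{i_q}])$ by writing the Lie monomial as $\mathrm{br}_q(\mathfrak{b})\cdot u$, use the $\Sigma_q$-equivariance of $f$ to pull $\mathrm{br}_q(\mathfrak{b})$ through, expand $f(u)$ via the orbit decomposition of Theorem~\ref{thm 4.1.2}(2), and then observe that $\mathrm{br}_q(\mathfrak{b})\cdot v=[v]$ for each basis tensor $v$; for surjectivity both construct the desired $f$ by prescribing $g$ on a set of orbit representatives containing $u$ and invoking Corollary~\ref{cor 4.1.4}. Your treatment is in fact slightly more careful than the paper's in making explicit the case $u\notin S_{n,q}$ (via conjugation $\mathcal{I}_u=\sigma\mathcal{I}_{u_0}\sigma^{-1}$) and in spelling out why $\mathrm{br}_q$ is letter-independent.
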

\begin{proof}
	From the definition of the $q$-th bracketing function $\mathrm{br}_q$ and the $\Sigma_q$ linearity of $f$, we conclude that
	\[f\left([x_{i_1},\cdots,x_{i_q}]\right)=\mathrm{br}_q\left(\llbracket [x_{i_1},\cdots,x_{i_q}]\rrbracket\right)\left(f(u)\right).\]
	
Since the orbits of $\mathcal{I}_u$'s action on $B_{n,q}$ form a partition of $B_{n,q}$, we may write 
\[f(u)=\sum_{v\in B_{n,q}}a_v\cdot v=\sum_{\mathcal{I}_u\cdot w\in\mathcal{O}_u}\left(\sum_{v\in B_{n,q}, \mathcal{I}_u\cdot v=\mathcal{I}_u\cdot w }a_v\cdot v\right).\] By the second condition in \ref{thm 4.1.2}, we conclude that 
\[f\left([x_{i_1},\cdots,x_{i_q}]\right)=\sum_{\mathcal{I}_u\cdot w\in\mathcal{O}_u}\left(a_{\mathcal{I}_u\cdot w}\sum_{v\in B_{n,q}, \mathcal{I}_u\cdot v=\mathcal{I}_u\cdot w }\mathrm{br}_q\left(\llbracket[x_{i_1},\cdots,x_{i_q}]\rrbracket\right)(v)\right).\]	
From the definition of $\mathrm{br}_q$, 
\[\mathrm{br}_q\left(\llbracket[x_{i_1},\cdots,x_{i_q}]\rrbracket\right)(v)=[v],\] where $[v]$ is a Lie monomial whose bracketing is the same as the bracketing of $[x_{i_1},\cdots,x_{i_q}]$. So 
\[f\left([x_{i_1},\cdot,x_{i_q}]\right)=\sum_{\mathcal{I}_u\cdot w\in\mathcal{O}_u}\left(a_{\mathcal{I}_u\cdot w}\cdot\sum_{v\in B_{n,q}, \mathcal{I}_u\cdot v=\mathcal{I}_u\cdot w}[v]\right).\]

For $u\in B_{n,q}$ there exists a unique $u'\in S_{n,q}$ such that $u=\sigma(u')$ for some $\sigma \in \Sigma_q$. Let $R_{n,q}=\left(S_{n,q}\cup \{u\}\right)\setminus\{u'\}$. For every $(a_{\mathrm{I}_u\cdot w})_{\mathrm{I}_u\cdot w\in\mathcal{O}_u}\in K^{d_u}$, define $g:R_{n,q}\to V_n^{\otimes q}$ as follows:
\begin{equation*}
g(r) =
\begin{cases*}
\sum_{\mathcal{I}_u\cdot w\in\mathcal{O}_u}\left(a_{\mathcal{I}_u\cdot w}\cdot\sum_{v\in B_{n,q}, \mathcal{I}_u\cdot v=\mathcal{I}_u\cdot w}v\right) & if $r=u$, \\
  0    & if $r\neq u$.
\end{cases*}
\end{equation*}
By Corollary \ref{cor 4.1.4}, $g$ can be uniquely extended to some $f\in \mathrm{Schur}(q,n,K)$. So the coefficients of $(a_{\mathrm{I}_u\cdot w})_{\mathrm{I}_u\cdot w\in\mathcal{O}_u}$ can be any element in $K^{d_u}$ and this completes the proof.
\end{proof}

\begin{pro}\label{4.1.11}
The map $\mathfrak{b}_n$ commutes with Schur algebra elements. That is, for all $q\geq 1$ and $f\in\mathrm{Schur}(q,n,K)$,
\[f\circ \mathfrak{b}_{n,2q}=\mathfrak{b}_{n,2q}\circ f.\]
\end{pro}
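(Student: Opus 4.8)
The plan is to recognize $\mathfrak{b}_{n,2q}$, once composed with the canonical embedding $I_{n,q}\colon L^q(X_n)\hookrightarrow T^q(V_n)=V_n^{\otimes q}$, as \emph{left multiplication by a single fixed element of the group algebra} $K(\Sigma_q)$, and then to read the statement off the defining $\Sigma_q$-equivariance of Schur algebra elements. So the first thing I would do is pin down that element. By definition $\mathfrak{b}_{n,2q}(x_{i_1}\otimes\cdots\otimes x_{i_q})=\mathrm{ad}(x_{i_1})\cdots\mathrm{ad}(x_{i_{q-1}})(x_{i_q})$ is the right-normed Lie monomial $[x_{i_1},[x_{i_2},[\cdots,[x_{i_{q-1}},x_{i_q}]\cdots]]]$, whose underlying bracketing pattern is independent of the indices $i_1,\dots,i_q$. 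Let $\lambda_q\in K(\Sigma_q)$ be the value of the $q$-th bracketing function of Definition \ref{def 4.1.8} on this right-normed pattern (for $q=3$ one has $\lambda_3=1-(23)-(132)+(13)$ by Example \ref{ex 4.1.7}). Then Definition \ref{def 4.1.8} gives, on every basis element of $V_n^{\otimes q}$,
\[
I_{n,q}\bigl(\mathfrak{b}_{n,2q}(x_{i_1}\otimes\cdots\otimes x_{i_q})\bigr)=\lambda_q\cdot(x_{i_1}\otimes\cdots\otimes x_{i_q}),
\]
so by $K$-linearity the composite $I_{n,q}\circ\mathfrak{b}_{n,2q}\colon V_n^{\otimes q}\to V_n^{\otimes q}$ is exactly the operator $u\mapsto\lambda_q\cdot u$.

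Next I would fix $f\in\mathrm{Schur}(q,n,K)$ and use Theorem \ref{4.1.10}, which tells us that $f$ carries $I_{n,q}(L^q(X_n))$ into itself, so that both $\mathfrak{b}_{n,2q}\circ f$ and $f\circ\mathfrak{b}_{n,2q}$ are well defined maps $V_n^{\otimes q}\to L^q(X_n)$ (on the second composite one applies $\mathfrak{b}_{n,2q}$, lands in $L^q(X_n)\subseteq V_n^{\otimes q}$ via $I_{n,q}$, applies $f$, and lands back in $I_{n,q}(L^q(X_n))$). Since $I_{n,q}$ is injective it suffices to compare the two composites after applying $I_{n,q}$. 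Using the previous paragraph, for $u\in V_n^{\otimes q}$ one gets $I_{n,q}(\mathfrak{b}_{n,2q}(f(u)))=\lambda_q\cdot f(u)$ and $I_{n,q}(f(\mathfrak{b}_{n,2q}(u)))=f(\lambda_q\cdot u)$. The defining property of $\mathrm{Schur}(q,n,K)$ is $f(\sigma\cdot u)=\sigma\cdot f(u)$ for every $\sigma\in\Sigma_q$, and extending this $K$-linearly over the group algebra yields $f(\lambda_q\cdot u)=\lambda_q\cdot f(u)$. Hence the two expressions agree for all $u$, and injectivity of $I_{n,q}$ forces $\mathfrak{b}_{n,2q}\circ f=f\circ\mathfrak{b}_{n,2q}$.

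I do not expect any serious computation; the only delicate point is the bookkeeping of domains and codomains — $\mathfrak{b}_{n,2q}$ lands in $L^q(X_n)$ whereas $f$ is an endomorphism of $V_n^{\otimes q}$ — which is handled by threading everything through $I_{n,q}$ and by invoking Theorem \ref{4.1.10} so that $f$ preserves Lie elements. One could equally avoid $\lambda_q$ altogether and argue straight from the explicit formula of Theorem \ref{4.1.10}: because $\mathfrak{b}_{n,2q}$ sends a basis tensor to the right-normed bracket of the same letters in the same order, it commutes termwise with the substitution-type expansion of $f$ given there. I would nonetheless present the $\lambda_q$ version as the main argument, since it makes transparent the conceptual reason — $\mathfrak{b}_{n,2q}$ is left multiplication by an element of $K(\Sigma_q)$, and Schur algebra elements are by definition exactly those endomorphisms commuting with the left $\Sigma_q$-action, hence with all such left multiplications.
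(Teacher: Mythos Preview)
Your argument is correct and is essentially the paper's own approach: the paper's one-line proof says ``similar to the proof of Theorem \ref{4.1.10}'', and that proof is exactly the observation that a Lie bracketing corresponds to acting by a fixed element of $K(\Sigma_q)$, which then commutes with $f$ by $\Sigma_q$-equivariance. Your write-up is more explicit about the bookkeeping with $I_{n,q}$ and about naming the element $\lambda_q$, but the underlying mechanism is identical; note also that the appeal to Theorem \ref{4.1.10} for preservation of Lie elements is convenient but not strictly needed, since your core identity $f(\lambda_q\cdot u)=\lambda_q\cdot f(u)$ already holds as an equality of endomorphisms of $V_n^{\otimes q}$.
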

\begin{proof}
Similar to the proof of Theorem \ref{4.1.10}, we obtain 
\[f\circ\mathfrak{b}_{n,2q}(x_{i_1}\otimes\cdots \otimes x_{i_q})=\mathfrak{b}_{n,2q}\circ f(x_{i_1}\otimes\cdots\otimes x_{i_q})\]
for all $x_{i_1}\otimes\cdots\otimes x_{i_q}\in V_n(K)^{\otimes q}$.
\end{proof}

\begin{pro}\label{4.1.12}
	The Schur algebra elements send Lie elements to Lie elements. That is, for all $f\in\mathrm{Schur}(q,n,K)$ and $u\in L_K^q(X_n)$,
	\[f(u)\in L_K^q(X_n).\]
\end{pro}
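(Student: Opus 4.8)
The plan is to reduce the claim to Theorem~\ref{4.1.10} by $K$-linearity. By the definition of the free Lie algebra, $L_K^q(X_n)$ is spanned as a $K$-module by the Lie monomials $[x_{i_1},\ldots,x_{i_q}]$ in the generators, ranging over all bracketings and all multi-indices $1\le i_1,\ldots,i_q\le n$. Since every $f\in\mathrm{Schur}(q,n,K)$ is $K$-linear, it therefore suffices to prove that $f$ sends each such Lie monomial into $L_K^q(X_n)$.

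First I would fix a Lie monomial $M=[x_{i_1},\ldots,x_{i_q}]$ with some chosen bracketing and apply Theorem~\ref{4.1.10} verbatim: it expresses $f(M)$ as a $K$-linear combination of Lie monomials $[v]$, $v\in B_{n,q}$, each carrying the \emph{same} bracketing as $M$, with coefficients $a_{\mathcal{I}_u\cdot w}\in K$ indexed by the orbits $\mathcal{O}_u$ of $u=x_{i_1}\otimes\cdots\otimes x_{i_q}$. In particular $f(M)$ lies in the $K$-span of the degree-$q$ Lie monomials in the generators, which is exactly $L_K^q(X_n)$. Running this over all bracketings and multi-indices and invoking $K$-linearity of $f$ yields $f(u)\in L_K^q(X_n)$ for every $u\in L_K^q(X_n)$.

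A slightly more structural alternative avoids the bracketing formula and uses Proposition~\ref{4.1.11} instead. The right-normed monomials $\mathrm{ad}(x_{i_1})\cdots\mathrm{ad}(x_{i_{q-1}})(x_{i_q})=\mathfrak{b}_{n,2q}(x_{i_1}\otimes\cdots\otimes x_{i_q})$ span $L_K^q(X_n)$ over any commutative ring $K$, so $L_K^q(X_n)=\mathfrak{b}_{n,2q}\bigl(V_n^{\otimes q}\bigr)$. Given $u\in L_K^q(X_n)$, write $u=\mathfrak{b}_{n,2q}(t)$ with $t\in V_n^{\otimes q}$; then, by Proposition~\ref{4.1.11}, $f(u)=f\bigl(\mathfrak{b}_{n,2q}(t)\bigr)=\mathfrak{b}_{n,2q}\bigl(f(t)\bigr)\in\mathfrak{b}_{n,2q}\bigl(V_n^{\otimes q}\bigr)=L_K^q(X_n)$.

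Neither argument presents a genuine difficulty, since the essential work has already been done in Theorem~\ref{4.1.10} and Proposition~\ref{4.1.11}. The only point deserving care is the spanning statement for $L_K^q(X_n)$ — by Lie monomials in the generators, respectively by right-normed monomials — which holds over an arbitrary commutative ring $K$ (no division by $q$ is needed, hence no appeal to Specht--Wever idempotency) via iterated use of anticommutativity and the Jacobi identity. So the step I would take the most care to state precisely is this reduction to a spanning set; everything afterwards is a direct appeal to the already-established results.
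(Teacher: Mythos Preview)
Your proposal is correct, and your second (``structural'') alternative via Proposition~\ref{4.1.11} is essentially the paper's own argument: the paper takes a simple Lie monomial $v=\mathfrak{b}_{n,2q}(x_{i_1}\otimes\cdots\otimes x_{i_q})$, observes that $\mathfrak{b}_{n,2q}(f(x_{i_1}\otimes\cdots\otimes x_{i_q}))\in L_K^q(X_n)$ by definition, swaps $f$ and $\mathfrak{b}_{n,2q}$ using Proposition~\ref{4.1.11}, and then invokes the spanning by simple Lie monomials.

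Your first route through Theorem~\ref{4.1.10} is a legitimate alternative the paper does not take. It has the mild advantage of giving an explicit description of $f(M)$ as a sum of Lie monomials with the \emph{same} bracketing as $M$, which is slightly more information than bare membership in $L_K^q(X_n)$; on the other hand it leans on the heavier Theorem~\ref{4.1.10}, whereas the $\mathfrak{b}_{n,2q}$ argument needs only the one-line commutation of Proposition~\ref{4.1.11}. Either way, as you note, the only point requiring care is the spanning of $L_K^q(X_n)$ by (right-normed) Lie monomials over an arbitrary commutative ring, and you handle that correctly without invoking any division.
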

\begin{proof}
Consider the simple Lie monomial \[v=[x_{i_1},\ldots,x_{i_q}]\in V_n(K)^{\otimes q}.\]By the definition of $\mathfrak{b}_{n,2q}$,
\[\mathfrak{b}_{n,2q}\left(f(x_{i_1}\otimes\cdots\otimes x_{i_q})\right)\in L_K^q(X_n).\] That is, 
\[f\left(\mathfrak{b}_{n,2q}(x_{i_1}\otimes\cdots\otimes x_{i_q})\right)\in L_K^q(X_n).\] Since every Lie element can be written as a linear combination of simple Lie monomials, the result follows. 
\end{proof}

Now we discuss the action of the Schur Algebra on the Lie algebra of derivations of a free Lie algebra.

\begin{df}
	For any map $h:X_n\to L^q(X_n)$, the unique $g\in\mathrm{Der}_q\left(L(X_n)\right)$ satisfying $h=g\vert_{X_n}$ is called the extension of $h$ and is denoted by \[g=h^{\mathrm{Ext}}\]
\end{df}

\begin{df}\label{1.11}
	The action of $f\in\mathrm{Schur}(q,n,K)$ on $\mathrm{Der}_q\left(L_K(X_n)\right)$ is defined as the map
	\[\Phi_f:\mathrm{Der}_q\left(L_K(X_n)\right)\to\mathrm{Der}_q\left(L_K(X_n)\right)\]with
	\[\Phi_f(\theta)=\left(f\circ I_{n,q}\circ\theta\vert_{X_n}\right)^{\mathrm{Ext}}.\] 
\end{df}

\begin{pro}\label{pro 1.1.4}
The action of the Schur algebra $\mathrm{Schur}(q,n,K)$ on $V_n^{\otimes q}$ induces an action on $\mathrm{Der}_q\left(L_K(X_n)\right)$ and therefore an action on $\mathrm{Der}_q\left(L_K(X_n)\right)$
\begin{align*}
\Phi_{n,q}:\mathrm{Schur}(q,n,K)\times\mathrm{Der}_q(L(X_n))&\to\mathrm{Der}_q(L(X_n))\\
(f,\theta)&\mapsto \Phi_f(\theta).
\end{align*}
\end{pro}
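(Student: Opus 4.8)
The plan is to verify that the formula in Definition~\ref{1.11} actually lands in $\mathrm{Der}_q(L(X_n))$ and that it defines a bona fide (left) action, i.e.\ that $\Phi_{\Id} = \Id$ and $\Phi_{f\circ g} = \Phi_f\circ\Phi_g$ (or $\Phi_g\circ\Phi_f$, according to the handedness convention of the composition in the Schur algebra). The only genuine content is the first point: one must check that for $\theta\in\mathrm{Der}_q(L(X_n))$ and $f\in\mathrm{Schur}(q,n,K)$, the composite $f\circ I_{n,q}\circ\theta|_{X_n}$ really is a map $X_n\to L^q(X_n)$, so that its extension $\bigl(f\circ I_{n,q}\circ\theta|_{X_n}\bigr)^{\mathrm{Ext}}$ exists and is unique. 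Once the target is known to be correct, both the identity law and the associativity law are immediate from the definition of $\mathrm{Ext}$ (a derivation is determined by its restriction to $X_n$, and $(-)^{\mathrm{Ext}}|_{X_n} = \Id$).

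First I would observe that for each $i$, $\theta(x_i)\in L^q(X_n)$ by definition of $\mathrm{Der}_q$, so $I_{n,q}(\theta(x_i))\in T^q(V_n)$ is a Lie element, i.e.\ lies in the image of $I_{n,q}$. Then I would apply Proposition~\ref{4.1.12}: Schur algebra elements send Lie elements to Lie elements, so $f\bigl(I_{n,q}(\theta(x_i))\bigr)\in L_K^q(X_n)$ (identifying $L^q(X_n)$ with its image under $I_{n,q}$). Hence $f\circ I_{n,q}\circ\theta|_{X_n}$ is a well-defined map $X_n\to L^q(X_n)$, and by the extension construction it has a unique preimage $\Phi_f(\theta)\in\mathrm{Der}_q(L(X_n))$. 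This shows the assignment $(f,\theta)\mapsto\Phi_f(\theta)$ is well-defined.

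Next I would check the action axioms. Linearity of $\Phi_f$ in $\theta$ and linearity of $f\mapsto\Phi_f$ follow because $I_{n,q}$, restriction to $X_n$, and $(-)^{\mathrm{Ext}}$ are all $K$-linear and $f$ is $K$-linear. For the identity element $\Id_{V_n^{\otimes q}}\in\mathrm{Schur}(q,n,K)$ we get $\Phi_{\Id}(\theta) = (I_{n,q}\circ\theta|_{X_n})^{\mathrm{Ext}}$, and since $I_{n,q}$ is just the inclusion $L^q(X_n)\hookrightarrow T^q(V_n)$, the map $I_{n,q}\circ\theta|_{X_n}$ equals $\theta|_{X_n}$ as a map into $L^q(X_n)$, whose extension is $\theta$ itself. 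For composability, given $f,g\in\mathrm{Schur}(q,n,K)$, compute
\[
\Phi_f(\Phi_g(\theta))
= \bigl(f\circ I_{n,q}\circ (\Phi_g(\theta))|_{X_n}\bigr)^{\mathrm{Ext}}
= \bigl(f\circ I_{n,q}\circ g\circ I_{n,q}\circ\theta|_{X_n}\bigr)^{\mathrm{Ext}},
\]
using that $(\Phi_g(\theta))|_{X_n} = g\circ I_{n,q}\circ\theta|_{X_n}$ by construction of $\mathrm{Ext}$. Since $I_{n,q}\circ g$ agrees with $g\circ I_{n,q}$ on Lie elements (both equal $g$ applied inside $T^q(V_n)$, with $g$ preserving the Lie subspace by Proposition~\ref{4.1.12}), this equals $\bigl((f\circ g)\circ I_{n,q}\circ\theta|_{X_n}\bigr)^{\mathrm{Ext}} = \Phi_{f\circ g}(\theta)$, giving the action property.

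The main obstacle — really the only place any mathematics is used — is confirming that $f$ restricted to the Lie submodule $L_K^q(X_n)\subset T^q(V_n)$ is again valued in $L_K^q(X_n)$, so that the intermediate expressions make sense as maps into $L^q(X_n)$ rather than merely into $T^q(V_n)$; this is exactly Proposition~\ref{4.1.12}, proved earlier via the Specht--Wever map $\mathfrak{b}_{n,2q}$ and Proposition~\ref{4.1.11}. Everything else is a formal consequence of the fact that a degree-$q$ derivation of $L(X_n)$ is freely and uniquely determined by an arbitrary map $X_n\to L^q(X_n)$.
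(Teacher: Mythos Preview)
Your proposal is correct and follows essentially the same approach as the paper: the paper's proof simply observes that $V_n^{\otimes q}$ is a left $\mathrm{Schur}(q,n,K)$-module by definition and that Schur algebra elements preserve Lie elements (Proposition~\ref{4.1.12}), which is exactly the ingredient you identify as the only nontrivial point. Your write-up is more detailed---you spell out the verification of the identity and associativity axioms that the paper leaves implicit---but the mathematical content is the same.
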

\begin{proof}
By the definition of $\mathrm{Schur}(q,n,K)$, $V_n^{\otimes q}$ is a left $\mathrm{Schur}(q,n,K)$ module. Since every Schur algebra element preserves Lie elements, the result follows.
\end{proof}

The following theorem can be derived directly from Proposition \ref{pro 1.1.4}.
\begin{thm}\label{thm 1.1.7}
	The Lie algebra of derivations $\mathrm{Der}_*(L_K(X_n))$ and  $Der_*^{\Delta}(L_K(X_n))$ are graded left-modules over the graded Schur algebra $Sch(n,K)$.  The action of the Schur algebra $Sch(n,K)$ on $\mathrm{Der}_*(L_K(X_n))$ and  $Der_*^{\Delta}(L_K(X_n))$ is given by Proposition \ref{pro 1.1.4}.
\end{thm}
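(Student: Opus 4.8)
The plan is to deduce the statement from Proposition \ref{pro 1.1.4} by assembling the degreewise actions $\Phi_{n,q}$ into a single action of $Sch(n,K)=\bigoplus_{q\geq 0}\mathrm{Schur}(q,n,K)$ and verifying the module axioms one graded summand at a time. First I would record the decompositions
\[
\mathrm{Der}_*^{\Delta}(L_K(X_n))=\bigoplus_{p\geq 1}\mathrm{Der}_p(L(X_n)),\qquad \mathrm{Der}_*(L_K(X_n))=\bigoplus_{p\geq 2}\mathrm{Der}_p(L(X_n)),
\]
together with the componentwise product on $Sch(n,K)$: the product of homogeneous elements of distinct degrees is $0$, and within a fixed degree $q$ it is composition in $\End_K(V_n^{\otimes q})$. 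The action is then defined on homogeneous elements by letting $f\in\mathrm{Schur}(q,n,K)$ act on the summand $\mathrm{Der}_q(L(X_n))$ by $\theta\mapsto\Phi_f(\theta)$ (Definition \ref{1.11}) and by $0$ on every summand $\mathrm{Der}_p(L(X_n))$ with $p\neq q$, and then extending $K$-bilinearly. By construction this action is degree-preserving, so it suffices to show that it is associative and that, for each fixed $q$, the map $f\mapsto\Phi_f$ is a unital $K$-algebra homomorphism $\mathrm{Schur}(q,n,K)\to\End_K\bigl(\mathrm{Der}_q(L(X_n))\bigr)$.

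The single identity to isolate is
\[
I_{n,q}\circ\Phi_f(\theta)\vert_{X_n}=f\circ I_{n,q}\circ\theta\vert_{X_n},\qquad \theta\in\mathrm{Der}_q(L(X_n)),\ f\in\mathrm{Schur}(q,n,K).
\]
This is exactly the content of Definition \ref{1.11} once one observes, via Proposition \ref{4.1.12}, that $f$ carries $I_{n,q}(L^q(X_n))$ into itself, so that $f\circ I_{n,q}\circ\theta\vert_{X_n}$ does take values in $I_{n,q}(L^q(X_n))$ and the ``$\mathrm{Ext}$'' of Definition \ref{1.11} is legitimately applied (through the injection $I_{n,q}$) to a map $X_n\to L^q(X_n)$. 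Granting this identity: $K$-linearity of $f\mapsto\Phi_f$ is immediate from the formula; $\Phi_{\Id_{V_n^{\otimes q}}}=\Id$ holds because a derivation is determined by its restriction to $X_n$ and $I_{n,q}$ is injective; and multiplicativity follows from the chase
\[
I_{n,q}\circ\bigl(\Phi_f\circ\Phi_g\bigr)(\theta)\vert_{X_n}=f\circ I_{n,q}\circ\Phi_g(\theta)\vert_{X_n}=f\circ g\circ I_{n,q}\circ\theta\vert_{X_n}=I_{n,q}\circ\Phi_{f\circ g}(\theta)\vert_{X_n},
\]
together once more with the injectivity of $I_{n,q}$ and the fact that a derivation is determined on $X_n$. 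The module axioms for $Sch(n,K)$ acting on $\mathrm{Der}_*^{\Delta}(L_K(X_n))$ then follow summand by summand, all cross-degree contributions vanishing on both sides; and since the action is degreewise it leaves the sub-sum $\bigoplus_{p\geq 2}\mathrm{Der}_p(L(X_n))=\mathrm{Der}_*(L_K(X_n))$ invariant, which settles that case as well.

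The only point that needs genuine care --- and where I would be most explicit --- is the identification $L^q(X_n)\cong I_{n,q}(L^q(X_n))\subseteq V_n^{\otimes q}$ that is tacit in Definition \ref{1.11}: one must check that $\Phi_f(\theta)$ really is a well-defined element of $\mathrm{Der}_q(L(X_n))$, i.e.\ that $f\circ I_{n,q}\circ\theta\vert_{X_n}$ lands in the image of the injective map $I_{n,q}$; this is precisely Proposition \ref{4.1.12}. With that in place the remainder is formal bookkeeping, and I do not expect any serious obstacle: the theorem is essentially a repackaging of Propositions \ref{pro 1.1.4} and \ref{4.1.12} in graded language.
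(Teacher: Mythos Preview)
Your proposal is correct and follows essentially the same approach as the paper, which simply states that the theorem ``can be derived directly from Proposition \ref{pro 1.1.4}'' without further argument. You have spelled out the routine graded bookkeeping and made explicit the appeal to Proposition \ref{4.1.12} (already implicit in the proof of Proposition \ref{pro 1.1.4}) needed for well-definedness, but the underlying route is the same.
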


\section{The Associativity and Anti-commutativity of the Schur Algebra}
Here we study some properties of Schur algebras and their action on
the Lie algebra of derivations of a free Lie algebra. By using the inter-degree multiplication
induced from a transfer-like map, it is shown that the Schur algebra forms an associative and anti-commutative graded K-algebra, and the so-called Schur operad is also constructed. \label{Associativity} The proof to the following lemma is clear:
\begin{lem}\label{lem 4.5.1}
	Given groups $A\leq B\leq G$, let $L_{A,B}$ and $L_{B,G}$ be complete sets of representatives of the left cosets of $A$ in $B$ and of $B$ in $G$, respectively. Then the following hold:
	\begin{enumerate}
		\item If $(g,h)$ and $(g',h')$ are distinct elements in $L_{B,G}\times L_{A,B}$, then $gh\neq g'h'$.
		\item The set \[\{gh\mid(gh)\in L_{B,G}\times L_{A,B}\}\] is a complete set of representatives of the left cosets of $A$ in $G$.
	\end{enumerate}
\end{lem}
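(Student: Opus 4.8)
Lemma \ref{lem 4.5.1} is a standard fact about coset representatives in a tower of subgroups $A \leq B \leq G$, so let me sketch the proof.

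The plan is to verify both statements by direct coset manipulation, using repeatedly the principle that two chosen representatives lying in one and the same coset must be equal.

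For part (1), I would suppose $gh = g'h'$ for $(g,h),(g',h') \in L_{B,G}\times L_{A,B}$. Since $h,h' \in B$, we get $g' = gh(h')^{-1} \in gB$, hence $g'B = gB$; as $g$ and $g'$ are both chosen representatives of left cosets of $B$ in $G$, this forces $g = g'$. Cancelling $g$ on the left then gives $h = h'$, so $(g,h) = (g',h')$, which is the contrapositive of (1).

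For part (2), I would first record that the cosets $ghA$ are pairwise distinct: if $ghA = g'h'A$, then $gh = g'h'a$ for some $a \in A \subseteq B$, and since $h,h',a \in B$ the argument above again yields $gB = g'B$, so $g = g'$, and then $hA = h'A$, so $h = h'$. Next I would show every left coset of $A$ in $G$ is attained: given $x \in G$, pick $g \in L_{B,G}$ with $xB = gB$, so that $g^{-1}x \in B$; then pick $h \in L_{A,B}$ with $(g^{-1}x)A = hA$, whence $xA = ghA$. Putting these together, $\{gh \mid (g,h)\in L_{B,G}\times L_{A,B}\}$ is an irredundant set of representatives meeting every left coset of $A$ in $G$, i.e.\ a complete set of representatives.

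There is essentially no obstacle here; the only point requiring a little care is keeping apart the assertion that the \emph{elements} $gh$ are distinct (part (1)) from the assertion that the \emph{cosets} $ghA$ are distinct (needed for part (2)), but both reduce to the same cancellation step precisely because $A \subseteq B$, so the factor $h' a$ stays inside $B$ and cannot disturb the $B$-coset of $g'$.
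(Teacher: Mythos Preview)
Your proof is correct and complete. The paper itself omits the proof entirely, stating only that ``the proof to the following lemma is clear,'' so there is nothing to compare against; your argument supplies exactly the standard coset manipulation one would expect.
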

Let $\lambda$ be a partition of a positive integer $d$; $d=a_1+\cdots+a_k$ (the order matters). We define the \textit{Young subgroup} of $\Sigma_d$, denoted by $\Sigma_\lambda$ as the direct product $\Sigma_{a_1}\times\cdots\times\Sigma_{a_k}$. Let $L_{\lambda}$ a complete set of representatives of the left cosets of $\Sigma_\lambda$ in $\Sigma_d$. Define the ``transfer-like'' map 
\[M_{\lambda}:\mathrm{Schur}(a_1,n,K)\otimes\cdots\otimes\mathrm{Schur}(a_k,n,K)\to \mathrm{Schur}(d,n,K)\] by
\[\left(M_{\lambda}(f_1\otimes \cdots\otimes f_k)\right)(x_{i_1}\otimes\cdots\otimes x_{i_d})=\sum_{\sigma\in L_{\lambda}}\sigma\left((f_1\otimes \cdots\otimes f_k)\left(\sigma^{-1}(x_{i_1}\otimes\cdots\otimes x_{i_d})\right)\right).\]
One easily checks that $M_{\lambda}$ is well-defined and is independent of the choice of $L_{\lambda}$.

We define the inter-degree product \[\boxtimes:\mathrm{Schur}(0,n,K)\times\mathrm{Schur}(q,n,K)\to\mathrm{Schur}(q,n,K)\] by $f\boxtimes g:=f\cdot g$, where the right hand side makes sense as $f$ is in $\mathrm{Schur}(0,n,K)=K$. 

To simplify the notation, for $\lambda=(a_1,\ldots,a_k)$, when specification of the partition is unnecessary, we denote $M_{\lambda}(f_1\otimes\cdots\otimes f_k)$ by $\displaystyle\star_{i=1}^{k}f_i$ and when $k=2$, we also write it as $f_1\star f_2$. We shall show this product is associative and commutative. 
\begin{lem}\label{prop 4.5.2}
For $f_i\in\mathrm{Schur}(a_i,n,K)$ $i=1,2,3$,
\[\star_{i=1}^{3}f_i=(f_1\star f_2)\star f_3=f_1\star(f_2\star f_3).\]	
\end{lem}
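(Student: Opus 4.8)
The plan is to prove associativity of the $\star$-product directly from the defining formula for $M_\lambda$, reducing everything to the coset-decomposition bookkeeping already packaged in Lemma~\ref{lem 4.5.1}. Write $d=a_1+a_2+a_3$, and let $\lambda=(a_1,a_2,a_3)$, $\mu=(a_1,a_2)$, $\nu=(a_1+a_2,a_3)$, $\rho=(a_2,a_3)$, $\tau=(a_1,a_2+a_3)$. The Young subgroups sit in a chain: $\Sigma_\mu\leq\Sigma_\nu$ (inside $\Sigma_{a_1+a_2}\times\Sigma_{a_3}$, the factor $\Sigma_{a_1+a_2}$ contains $\Sigma_{a_1}\times\Sigma_{a_2}$), with $\Sigma_\lambda=\Sigma_\mu\times\Sigma_{a_3}$ as subgroups of $\Sigma_d$; symmetrically $\Sigma_\rho$ relates to $\Sigma_\tau$ on the other side.

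First I would unwind $(f_1\star f_2)\star f_3$. By definition, $(f_1\star f_2)\star f_3 = M_\nu\bigl((f_1\star f_2)\otimes f_3\bigr)$, and $f_1\star f_2 = M_\mu(f_1\otimes f_2)$. Substituting the formula for $M_\mu$ into the formula for $M_\nu$, one gets a double sum: for $x=x_{i_1}\otimes\cdots\otimes x_{i_d}$,
\[
\bigl((f_1\star f_2)\star f_3\bigr)(x)=\sum_{\sigma\in L_\nu}\ \sum_{\pi\in L_{\mu}'}\ (\sigma\pi)\Bigl((f_1\otimes f_2\otimes f_3)\bigl((\sigma\pi)^{-1}(x)\bigr)\Bigr),
\]
where $L_\mu'$ is a set of coset representatives of $\Sigma_\mu$ in $\Sigma_{a_1+a_2}\times\Sigma_{a_3}=\Sigma_\nu$ built from $L_{\mu}$ (the $\Sigma_{a_3}$-factor acts trivially on the first $a_1+a_2$ tensor slots and is absorbed). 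Here one uses that $f_3$ is $\Sigma_{a_3}$-equivariant to move the $\Sigma_{a_3}$-part of $\pi$ past $(f_1\otimes f_2\otimes f_3)$, and similarly that $f_1\otimes f_2$ is $\Sigma_\mu$-equivariant on its block — this is exactly why the Schur-algebra hypothesis is needed. Now invoke Lemma~\ref{lem 4.5.1} with $A=\Sigma_\mu\cong\Sigma_\lambda$ (modulo the trivially-acting factor), $B=\Sigma_\nu$, $G=\Sigma_d$: the products $\sigma\pi$ range without repetition over a complete set $L_\lambda$ of representatives of $\Sigma_\lambda$ in $\Sigma_d$. Hence the double sum collapses to $\sum_{\omega\in L_\lambda}\omega\bigl((f_1\otimes f_2\otimes f_3)(\omega^{-1}(x))\bigr)=\bigl(\star_{i=1}^{3}f_i\bigr)(x)$, using the independence of $M_\lambda$ from the choice of $L_\lambda$. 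The computation for $f_1\star(f_2\star f_3)$ is the mirror image, using $\Sigma_\rho\leq\Sigma_\tau\leq\Sigma_d$ and the equivariance of $f_1$; it yields the same $M_\lambda(f_1\otimes f_2\otimes f_3)$.

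The main obstacle is the iterated-coset identification: one must check carefully that the representatives $\sigma\pi$ obtained by composing an outer representative with an inner one genuinely form a transversal for $\Sigma_\lambda$ in $\Sigma_d$, and that the permutations being moved through the $f_i$'s really do lie in the relevant Young subgroups (so that equivariance applies). This is precisely the content of Lemma~\ref{lem 4.5.1}(1)--(2), so the proof amounts to setting up the bookkeeping so that the lemma applies verbatim on each side, then reading off that both sides equal $M_\lambda(f_1\otimes f_2\otimes f_3)$. I would also remark that since the statement mentions commutativity elsewhere, only associativity is claimed here, so no sign or swap argument enters; the entire proof is the coset manipulation plus two applications of equivariance.
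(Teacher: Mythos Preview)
Your proposal is correct and follows essentially the same approach as the paper's proof: both arguments set up the chain $\Sigma_{a_1}\times\Sigma_{a_2}\times\Sigma_{a_3}\leq\Sigma_{a_1+a_2}\times\Sigma_{a_3}\leq\Sigma_{a_1+a_2+a_3}$, invoke Lemma~\ref{lem 4.5.1} to assemble a transversal for $\Sigma_\lambda$ in $\Sigma_d$ from the two intermediate transversals, and then read off that the iterated product equals $M_\lambda(f_1\otimes f_2\otimes f_3)$, with the other bracketing handled symmetrically. Your write-up is in fact somewhat more explicit than the paper's (which leaves the double-sum manipulation implicit), but the underlying argument is the same.
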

\begin{proof}
Let $\lambda'$ be the partition $(a_1,a_2)$ of $a_1+a_2$ and let $L_{\lambda'}$ be a complete set of representatives of the left cosets of $\Sigma_{a_1}\times\Sigma_{a_2}$ in $\Sigma_{a_1+a_2}$. The natural embedding of groups $\Sigma_{a_1}\times\Sigma_{a_2}\hookrightarrow \Sigma_{a_1+a_2}$ gives the embedding
	\[\Sigma_{a_1}\times\Sigma_{a_2}\times\Sigma_{a_3}\hookrightarrow \Sigma_{a_1+a_2}\times\Sigma_{a_3}.\] So $L_{\lambda'}$ can also be regarded as a complete set of representatives of the left cosets of $\Sigma_{a_1}\times\Sigma_{a_2}\times\Sigma_{a_3}$ in $\Sigma_{a_1+a_2}\times\Sigma_{a_3}$. Consider the groups
	\[\Sigma_{a_1}\times\Sigma_{a_2}\times\Sigma_{a_3}\leq\Sigma_{a_1+a_2}\times\Sigma_{a_3}\leq\Sigma_{a_1+a_2+a_3}. \]Let $L_{\alpha}$ be a complete set of representatives of the left cosets of $\Sigma_{a_1+a_2}\times\Sigma_{a_3}$ in $\Sigma_{a_1+a_2+a_3}$. So by Lemma~\ref{lem 4.5.1}, the set
	\[L_{\beta}:=\{\tau\sigma\mid(\tau,\sigma)\in L_{\alpha}\times L_{\lambda'}\}\] is a complete set of representatives of the left cosets of $\Sigma_{a_1}\times\Sigma_{a_2}\times\Sigma_{a_3}$ in $\Sigma_{a_1+a_2+a_3}$. From the definitions of $M_{\lambda}$, we conclude that
\[\star_{i=1}^{3}f_i=f_1\star(f_2\star f_3).\] The proof to \[\star_{i=1}^{3}f_i=(f_1\star f_2)\star f_3\]	is similar.

\end{proof}
\begin{lem}\label{prop 4.5.3}
For any partition $\lambda=(a_1,a_2)$ of a positive integer $d$,
\[M_{\lambda}(f_1\otimes f_2)=M_{\lambda'}(f_2\otimes f_1),\]  
where $\lambda'$ is the partition $(a_2,a_1)$ of $d$.
\end{lem}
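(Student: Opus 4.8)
The plan is to prove the identity $M_\lambda(f_1\otimes f_2)=M_{\lambda'}(f_2\otimes f_1)$ by comparing the two sides on an arbitrary basis element $x_{i_1}\otimes\cdots\otimes x_{i_d}$, where $\lambda=(a_1,a_2)$ and $\lambda'=(a_2,a_1)$. The key observation is that $\Sigma_{\lambda}=\Sigma_{a_1}\times\Sigma_{a_2}$ and $\Sigma_{\lambda'}=\Sigma_{a_2}\times\Sigma_{a_1}$ are conjugate subgroups of $\Sigma_d$: if $w\in\Sigma_d$ denotes the ``block transposition'' that swaps the first block of $a_1$ letters with the last block of $a_2$ letters (i.e.\ $w(j)=j+a_2$ for $1\le j\le a_1$ and $w(j)=j-a_1$ for $a_1<j\le d$), then $w\,\Sigma_{\lambda}\,w^{-1}=\Sigma_{\lambda'}$. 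Consequently, if $L_{\lambda}$ is a complete set of left-coset representatives for $\Sigma_{\lambda}$ in $\Sigma_d$, then $L_{\lambda'}:=\{\sigma w^{-1}\mid \sigma\in L_{\lambda}\}$ is a complete set of left-coset representatives for $\Sigma_{\lambda'}$ in $\Sigma_d$; since by the remark after the definition of $M_\lambda$ the map is independent of the chosen set of representatives, I am free to use this particular $L_{\lambda'}$.

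Next I would write out $M_{\lambda'}(f_2\otimes f_1)(x_{i_1}\otimes\cdots\otimes x_{i_d})$ using the representatives $\sigma w^{-1}$, $\sigma\in L_\lambda$. This gives a sum over $\sigma\in L_\lambda$ of terms $\sigma w^{-1}\bigl((f_2\otimes f_1)(w\sigma^{-1}(x_{i_1}\otimes\cdots\otimes x_{i_d}))\bigr)$. The core computation is then the identity
\[
w^{-1}\circ (f_2\otimes f_1)\circ w \;=\; f_1\otimes f_2
\]
as endomorphisms of $V_n^{\otimes d}$, where on the left $f_2\otimes f_1$ acts with $f_2$ on the first $a_2$ tensor slots and $f_1$ on the last $a_1$ slots, while on the right $f_1\otimes f_2$ acts with $f_1$ on the first $a_1$ slots and $f_2$ on the last $a_2$ slots. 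This is immediate from the definition of the block transposition $w$: conjugating by $w$ simply relabels which slots are ``the first block'' and which are ``the second block.'' Substituting this identity collapses the $\sigma$-th summand of $M_{\lambda'}(f_2\otimes f_1)$ to exactly $\sigma\bigl((f_1\otimes f_2)(\sigma^{-1}(x_{i_1}\otimes\cdots\otimes x_{i_d}))\bigr)$, which is the $\sigma$-th summand of $M_\lambda(f_1\otimes f_2)$. Summing over $\sigma\in L_\lambda$ yields the claim.

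The only genuinely delicate point is bookkeeping: one must be careful that the right action of $\Sigma_d$ on $V_n^{\otimes d}$ (defined in the Preliminaries by permuting tensor factors) interacts with the ``apply $f_i$ blockwise'' operation exactly as the slot-relabelling intuition suggests, including getting the inverses in $\sigma^{-1}(\cdots)$ and $w\sigma^{-1}(\cdots)$ consistent with the convention in the definition of $M_\lambda$. I expect this to be the main obstacle — not because it is hard, but because a sign or an inverse placed wrongly would break the argument — so I would verify the conjugation identity $w\,\Sigma_\lambda\,w^{-1}=\Sigma_{\lambda'}$ and the intertwining relation $w^{-1}(f_2\otimes f_1)w=f_1\otimes f_2$ carefully on indices before assembling the final computation. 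Everything else (well-definedness of $L_{\lambda'}$, independence of $M$ from representatives, linearity) is already available from Lemma~\ref{lem 4.5.1} and the paragraph defining $M_\lambda$.
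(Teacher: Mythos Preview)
Your proposal is correct and follows essentially the same route as the paper: the paper introduces the block-swap permutation $\sigma_\tau\in\Sigma_d$ (your $w$), records the conjugation identity $f_1\otimes f_2=\sigma_\tau^{-1}\circ(f_2\otimes f_1)\circ\sigma_\tau$, and then observes that replacing $\sigma$ by $\sigma\sigma_\tau^{-1}$ in the defining sum for $M_\lambda$ yields $M_{\lambda'}(f_2\otimes f_1)$. Your version is slightly more explicit about why $\{\sigma w^{-1}\}$ is a set of coset representatives for $\Sigma_{\lambda'}$, but the argument is the same.
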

\begin{proof}
	Write $x_{i_1}\otimes\cdots\otimes x_{i_n}$ as the tensor product 
	\[x_{i_1}\otimes\cdots\otimes x_{i_n}=b_1\otimes b_2,\]
	where $b_i\in V_n^{\otimes a_i}$ for $i=1$ or $2$. Let $\tau =(12)\in\Sigma_2$ and let $\sigma_{\tau}$ be the unique element in $\Sigma_d$ such that 
	\[\sigma_{\tau}(x_{i_1}\otimes\cdots\otimes x_{i_n})=b_2\otimes b_1.\] 
	A direct calculation gives
	\[f_1\otimes f_2=\sigma_{\tau}^{-1}\circ (f_2\otimes f_1)\circ\sigma_{\tau},\]which implies
	\[M_{\lambda}(f_1\otimes f_2)=\sum_{\sigma\in L_{\lambda}}(\sigma\sigma_{\tau}^{-1})\circ(f_2\otimes f_1)\circ(\sigma\sigma_{\tau}^{-1})^{-1}.\]
	One easily shows that the right hand side of the equation above is precisely 
	\[M_{\lambda'}(f_2\otimes f_1).\]
\end{proof}
The same ideas can be used to prove the case when the partition of $d$ is $\lambda=(a_1,\ldots,a_k)$. More precisely, we have the following proposition:
\begin{pro}\label{prop 4.5.3}
Let $\lambda=(a_1,\ldots,a_k)$ be a partition of a positive integer $d$ and let $\tau\in \Sigma_k$. Let $\lambda_{\tau}$ be the partition $(a_{\tau(1)},\ldots,a_{\tau(k)})$. Then
\[M_{\lambda}(f_1\otimes \cdots f_k)=M_{\lambda_{\tau}}(f_{\tau(1)}\otimes \cdots f_{\tau(k)}).\] 
\end{pro}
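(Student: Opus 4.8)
The plan is to imitate the proof of the two-factor case, replacing the single block transposition by a general block permutation. Since $\Sigma_k$ permutes the $k$ consecutive blocks into which the $d$ tensor slots are divided, to each $\tau\in\Sigma_k$ one can attach a "block permutation" $\sigma_\tau\in\Sigma_d$: writing an arbitrary basis tensor as $b_1\otimes\cdots\otimes b_k$ with $b_j\in V_n^{\otimes a_j}$, let $\sigma_\tau$ be the unique permutation of the $d$ slots with
\[\sigma_\tau(b_1\otimes\cdots\otimes b_k)=b_{\tau(1)}\otimes\cdots\otimes b_{\tau(k)},\]
rearranging whole blocks and leaving their internal order untouched. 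First I would record two properties of $\sigma_\tau$. The first is that it intertwines the two tensor-product operators,
\[f_1\otimes\cdots\otimes f_k=\sigma_\tau^{-1}\circ\bigl(f_{\tau(1)}\otimes\cdots\otimes f_{\tau(k)}\bigr)\circ\sigma_\tau,\]
which one checks by tracking a basis tensor through the right-hand side: after $\sigma_\tau$ the $i$-th consecutive block is exactly $b_{\tau(i)}$, so $f_{\tau(i)}$ acts on it, and $\sigma_\tau^{-1}$ then moves $f_j(b_j)$ back to the $j$-th slot for each $j$. The second is that conjugation by $\sigma_\tau$ carries the Young subgroup $\Sigma_\lambda=\Sigma_{a_1}\times\cdots\times\Sigma_{a_k}$ onto $\Sigma_{\lambda_\tau}=\Sigma_{a_{\tau(1)}}\times\cdots\times\Sigma_{a_{\tau(k)}}$, i.e. $\sigma_\tau\,\Sigma_\lambda\,\sigma_\tau^{-1}=\Sigma_{\lambda_\tau}$, because $\sigma_\tau$ simply relabels which consecutive block of slots is which. (For $k=2$ both reduce to the identities used in the preceding lemma.)

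Granting these, the computation is the one already carried out for two factors. Evaluating $M_\lambda(f_1\otimes\cdots\otimes f_k)$ on a basis tensor $v$ and substituting the intertwining relation,
\[M_\lambda(f_1\otimes\cdots\otimes f_k)(v)=\sum_{\sigma\in L_\lambda}(\sigma\sigma_\tau^{-1})\Bigl(\bigl(f_{\tau(1)}\otimes\cdots\otimes f_{\tau(k)}\bigr)\bigl((\sigma\sigma_\tau^{-1})^{-1}(v)\bigr)\Bigr).\]
By the conjugation identity the assignment $C\mapsto C\sigma_\tau^{-1}$ is a bijection from the left cosets of $\Sigma_\lambda$ in $\Sigma_d$ onto the left cosets of $\Sigma_{\lambda_\tau}$ in $\Sigma_d$; hence $\{\sigma\sigma_\tau^{-1}:\sigma\in L_\lambda\}$ is a complete set of representatives of the left cosets of $\Sigma_{\lambda_\tau}$. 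Since $M_{\lambda_\tau}$ is independent of the chosen set of representatives, the right-hand side equals $M_{\lambda_\tau}(f_{\tau(1)}\otimes\cdots\otimes f_{\tau(k)})(v)$, and the proposition follows.

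I expect the only genuine work to be pinning down $\sigma_\tau$ and verifying the two bracketed properties with the index bookkeeping done correctly — in particular choosing the convention for $\sigma_\tau$ so that the reordered partition comes out as $\lambda_\tau=(a_{\tau(1)},\dots,a_{\tau(k)})$ rather than its inverse-permuted variant. An alternative that avoids general block permutations is to prove the statement first for $\tau$ an adjacent transposition $s_i$: using the associativity Lemma \ref{prop 4.5.2}, together with its evident extension to any number of factors proved by induction on $k$ via Lemma \ref{lem 4.5.1}, one groups the factors so that $f_i$ and $f_{i+1}$ sit inside a single $\star$, swaps them by the two-factor case already established, and regroups. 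Since the adjacent transpositions generate $\Sigma_k$ and the single-swap identity holds for arbitrary factors and arbitrary partitions, an induction on the length of a reduced word for $\tau$ then yields the general case.
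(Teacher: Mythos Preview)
Your proposal is correct and is precisely what the paper intends: the paper does not give a separate proof of the general proposition but only says ``the same ideas can be used'' as in the $k=2$ lemma, and your first approach is exactly the natural fleshing-out of those ideas --- introduce the block permutation $\sigma_\tau$, verify the intertwining relation and the conjugation identity $\sigma_\tau\Sigma_\lambda\sigma_\tau^{-1}=\Sigma_{\lambda_\tau}$, and conclude using independence of $M_{\lambda_\tau}$ from the choice of coset representatives. Your alternative via adjacent transpositions and associativity is also valid but goes beyond what the paper sketches.
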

The following lemma follows directly from the definition of $M_{\lambda}$.
\begin{lem}\label{p80}
	If $f,f_1,f_2\in\mathrm{Schur}(0,n,K)=K$ and $g, g_1,g_2\in\mathrm{Schur}(q,n,K)$, 
	\[f\boxtimes(g_1+g_2)=f\boxtimes g_1+f\boxtimes g_2\]
	and
	\[(f_1+f_2)\boxtimes g=f_1\boxtimes g+f_2\boxtimes g.\]
\end{lem}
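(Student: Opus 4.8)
The plan is to unwind the definition of the inter-degree product $\boxtimes$ and recognize it as ordinary scalar multiplication in the $K$-module $\mathrm{Schur}(q,n,K)$, after which both identities become instances of the distributivity axioms for a module.

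First I would recall that, by Definition \ref{defschur}, $\mathrm{Schur}(q,n,K)$ is a $K$-subalgebra of $\mathrm{End}\left(V_n^{\otimes q}\right)$, hence in particular a $K$-module, with addition and scalar multiplication inherited from $\mathrm{End}\left(V_n^{\otimes q}\right)$. By definition one has $f\boxtimes g=f\cdot g$ for $f\in\mathrm{Schur}(0,n,K)=K$ and $g\in\mathrm{Schur}(q,n,K)$, where the right-hand side is exactly the scalar $f\in K$ acting on the endomorphism $g$, i.e. the map $v\mapsto f\cdot g(v)$. This map again lies in $\mathrm{Schur}(q,n,K)$: since $K$ is commutative and each $\sigma\in\Sigma_q$ acts $K$-linearly on $V_n^{\otimes q}$, we have $\sigma\bigl(f\cdot g(v)\bigr)=f\cdot\sigma\bigl(g(v)\bigr)=f\cdot g\bigl(\sigma(v)\bigr)$, so the defining commutation condition of Definition \ref{defschur} is preserved under rescaling. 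Thus $\boxtimes$ is genuinely an internal scalar action of $K=\mathrm{Schur}(0,n,K)$ on $\mathrm{Schur}(q,n,K)$.

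Granting this, the first identity $f\boxtimes(g_1+g_2)=f\boxtimes g_1+f\boxtimes g_2$ is precisely distributivity of scalar multiplication over module addition, and the second identity $(f_1+f_2)\boxtimes g=f_1\boxtimes g+f_2\boxtimes g$ is distributivity over scalar addition; both hold in any $K$-module, so there is nothing further to prove. Alternatively, if one wishes to argue by hand, evaluating each identity on an arbitrary basis vector $x_{i_1}\otimes\cdots\otimes x_{i_q}\in B_{n,q}$ of the free $K$-module $V_n^{\otimes q}$ reduces it to the corresponding distributive law inside $V_n^{\otimes q}$ itself. There is no real obstacle here; the only point worth making explicit is the one above, namely that rescaling a Schur-algebra element by a scalar keeps it in the Schur algebra, which is immediate from the $K$-linearity of the defining diagram.
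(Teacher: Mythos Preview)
Your proof is correct and matches the paper's approach: the paper simply records that the lemma ``follows directly from the definition,'' and your argument makes this explicit by identifying $f\boxtimes g$ with the scalar action $f\cdot g$ of $K=\mathrm{Schur}(0,n,K)$ on the $K$-module $\mathrm{Schur}(q,n,K)$, so both identities are the module distributivity axioms. Your extra remark that rescaling preserves the $\Sigma_q$-equivariance condition is a helpful sanity check, though it is already implicit in $\mathrm{Schur}(q,n,K)$ being a $K$-subalgebra of $\mathrm{End}(V_n^{\otimes q})$.
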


Putting these together, we have the following
\begin{thm}\label{thm 1.1.8 and 1.1.9}
The inter-degree multiplication $\boxtimes$ in the Schur algebra $\mathrm{Schur}(n;K)$
is associative and commutative and satisfies the distributive laws and thus the Schur algebra $\mathrm{Schur}(n;K)$ is an associative and anti-commutative
graded $K$-algebra.
\end{thm}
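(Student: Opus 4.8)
The plan is to assemble the statement from the pieces already established in this section, treating the three assertions --- associativity of $\boxtimes$, commutativity of $\boxtimes$, and the anti-commutativity of the graded algebra $\mathrm{Schur}(n;K)$ --- in turn. The first observation is that $\boxtimes$ is nothing but the degenerate case $k=2$, $a_1=0$ of the transfer-like product $\star$: when one factor lies in $\mathrm{Schur}(0,n,K)=K$, the Young subgroup is $\Sigma_0\times\Sigma_q=\Sigma_q$ itself, so $L_\lambda$ is a single coset representative and $M_\lambda(f\otimes g)$ collapses to scalar multiplication $f\cdot g$, which is exactly how $\boxtimes$ was defined. Hence associativity of $\boxtimes$ on triples drawn from $\mathrm{Schur}(0,n,K)$ and $\mathrm{Schur}(q,n,K)$ follows from Lemma~\ref{prop 4.5.2} applied with $a_1=a_2=0$ (or with two zero indices and one equal to $q$), and commutativity follows from Lemma~\ref{prop 4.5.3} in the two-factor form, or from its multi-factor generalization Proposition~\ref{prop 4.5.3}. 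The distributive laws are precisely the content of Lemma~\ref{p80}. So the first two clauses of the theorem are immediate once one records that $\boxtimes$ is a special case of $\star$.

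Next I would address the ring structure on $\mathrm{Schur}(n;K)=\bigoplus_{q\ge 0}\mathrm{Schur}(q,n,K)$. The graded multiplication is the family $\star\colon \mathrm{Schur}(p,n,K)\otimes\mathrm{Schur}(q,n,K)\to\mathrm{Schur}(p+q,n,K)$ given by $M_{(p,q)}$; one checks it is $K$-bilinear (clear from the formula), and associativity across degrees is exactly Lemma~\ref{prop 4.5.2}, now used with general $a_1,a_2,a_3$. The unit is $1\in K=\mathrm{Schur}(0,n,K)$, and the identity $1\star g=g\boxtimes g=g$ is the $\boxtimes$-unit computation just mentioned. Thus $\mathrm{Schur}(n;K)$ is an associative unital graded $K$-algebra, and $\boxtimes$ is recovered as multiplication by the degree-zero part.

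The remaining --- and only genuinely delicate --- point is the \emph{anti}-commutativity. Here "anti-commutative graded $K$-algebra" should be read in the doubled-grading convention of Remark~\ref{doubling}: an element of $\mathrm{Schur}(q,n,K)$ has degree $2q$, so the Koszul sign $(-1)^{(2p)(2q)}=+1$ is always trivial, and what is actually being asserted is that $\star$ is (graded-)commutative in the sense $f\star g = g\star f$, i.e. the naive commutativity from Proposition~\ref{prop 4.5.3} coincides with graded commutativity because all degrees are even. I would make this explicit: invoke Proposition~\ref{prop 4.5.3} with $\tau$ the transposition in $\Sigma_2$ to get $M_{(p,q)}(f\otimes g)=M_{(q,p)}(g\otimes f)$, and then note that under the degree-doubling convention the sign $(-1)^{|f||g|}$ demanded by the graded-anti-commutativity axiom is $(-1)^{4pq}=1$, so "commutative" and "anti-commutative" are not in conflict. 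The main obstacle, then, is not a computation at all but a bookkeeping one: pinning down precisely which sign convention is meant by "anti-commutative" and verifying that Proposition~\ref{prop 4.5.3} delivers commutativity in that normalization --- after which the theorem is simply the conjunction of Lemmas~\ref{prop 4.5.2}, \ref{p80} and Proposition~\ref{prop 4.5.3}.
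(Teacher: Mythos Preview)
Your proposal is correct and follows the same approach as the paper: the theorem is presented there with no separate proof, only the phrase ``Putting these together,'' meaning it is the direct conjunction of Lemma~\ref{prop 4.5.2} (associativity), Proposition~\ref{prop 4.5.3} (commutativity), and Lemma~\ref{p80} (distributivity). Your write-up is more detailed than the paper's --- in particular, your explicit reconciliation of ``commutative'' with ``anti-commutative'' via the doubled-grading convention of Remark~\ref{doubling} fills in a point the paper leaves implicit --- but the underlying logic is identical. (One small typo: ``$1\star g=g\boxtimes g=g$'' should read ``$1\star g=1\boxtimes g=g$''.)
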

\section{Action of the Schur Algebra on Derivations and Operads}\label{Action}

In this section, the Schur-operad structure is constructed.  We show that the Lie algebra of derivations can be generated by quadratic derivations together with the action of the Schur operad.

Let $\overline{S}(\widetilde{M}_n,K)$ denote the smallest sub-Lie algebra of $\mathrm{Der}_*L_K(X_n)$ which contains $\widetilde{M}_n$ and is invariant under the map $\Phi_f$ for any $f\in\mathrm{Schur}(q,n,K)$ for some $q\geq 2$, where $\Phi_f$ is defined in Definition \ref{1.11}. Let 
\[\overline{S}_q(\widetilde{M}_n,K)=\overline{S}(\widetilde{M}_n,K)\bigcap\mathrm{Der}_q(L_K(X_n)).\]

\begin{pro}\label{prop 4.2.1}
Suppose $n,p\geq 2$. Then $\overline{S}_q(\widetilde{M}_n,K)=\mathrm{Der}_q(L_K(X_n))$ if and only if the set
\[\mathcal{B}(p,n,K)=\{f_{i,[x_j,u]}\in\mathrm{Der}_q(L_K(X_n))\mid 1\leq i,j\leq u\}\subseteq\overline{S}_q({M_n},K),\] where $u$ is a simple Lie monomial of degree $2(p-1)$.	
\end{pro}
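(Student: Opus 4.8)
The plan is to dispose of the ``only if'' direction trivially and to prove the ``if'' direction by exhibiting $\mathcal{B}(p,n,K)$ as a $K$-spanning set of $\mathrm{Der}_q(L_K(X_n))$ (here $q=p$, since the derivation $f_{i,[x_j,u]}$ with $u$ of degree $2(p-1)$ has degree $2p$). The conclusion is then forced: $\overline{S}_q(\widetilde{M}_n,K)$ is the intersection of the sub-Lie algebra $\overline{S}(\widetilde{M}_n,K)$ (in particular a $K$-submodule of $\mathrm{Der}_*L_K(X_n)$) with the $K$-submodule $\mathrm{Der}_q(L_K(X_n))$, hence is itself a $K$-submodule; so once it contains a spanning set of $\mathrm{Der}_q(L_K(X_n))$ it must equal all of it, and the reverse inclusion always holds.

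For the ``only if'' direction: if $\overline{S}_q(\widetilde{M}_n,K)=\mathrm{Der}_q(L_K(X_n))$, then each $f_{i,[x_j,u]}$, being an element of $\mathrm{Der}_q(L_K(X_n))$, lies in $\overline{S}_q(\widetilde{M}_n,K)$, so $\mathcal{B}(p,n,K)\subseteq\overline{S}_q(\widetilde{M}_n,K)$ automatically.

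For the ``if'' direction, first I would invoke the identification $\mathrm{Der}_q(L_K(X_n))\cong\Hom_K(V_n,L^q(X_n))$ recalled in Section~\ref{Preliminaries}. Since $V_n$ is $K$-free on $x_1,\dots,x_n$, a $K$-spanning set of $\Hom_K(V_n,L^q(X_n))$ is furnished by the derivations $f_{i,w}$ ($x_i\mapsto w$, $x_k\mapsto 0$ for $k\neq i$), with $1\le i\le n$ and $w$ running over any $K$-spanning set of $L^q(X_n)$. Thus it suffices to show that $L^q(X_n)$ is spanned over $K$ by the elements $[x_j,u]$ with $1\le j\le n$ and $u$ a simple Lie monomial of degree $2(q-1)$ (in the doubled grading of Remark~\ref{doubling}, i.e.\ an iterated bracket of $q-1$ of the generators). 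This is the standard fact that the free Lie algebra is generated in degree one: by repeated use of the Jacobi identity and antisymmetry, $L^q(X_n)$ is spanned over $K$ by the left-normed monomials $\mathrm{ad}(x_{i_1})\cdots\mathrm{ad}(x_{i_{q-1}})(x_{i_q})$, and each such monomial equals $[x_{i_1},u]$ with $u=\mathrm{ad}(x_{i_2})\cdots\mathrm{ad}(x_{i_{q-1}})(x_{i_q})$ a left-normed, hence simple, Lie monomial of degree $2(q-1)$. Therefore $\mathcal{B}(p,n,K)$ spans $\mathrm{Der}_q(L_K(X_n))$; combining the hypothesis $\mathcal{B}(p,n,K)\subseteq\overline{S}_q(\widetilde{M}_n,K)$ with the $K$-linearity of $\overline{S}_q(\widetilde{M}_n,K)$ yields $\overline{S}_q(\widetilde{M}_n,K)=\mathrm{Der}_q(L_K(X_n))$.

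I expect no essential obstacle: the proposition is a reduction step, and the only points needing care are the bookkeeping of the doubled grading (so that $u$ carries degree $2(q-1)$, one bracket fewer than $[x_j,u]$), the observation that $\overline{S}_q(\widetilde{M}_n,K)$ is closed under $K$-linear combinations, and the fact that the left-normed spanning of $L^q(X_n)$ is valid over an arbitrary commutative ring $K$ since it uses only Jacobi and antisymmetry, not division. The genuine work --- verifying the inclusion $\mathcal{B}(p,n,K)\subseteq\overline{S}_q(\widetilde{M}_n,K)$ via the Schur-operad action --- is addressed separately in the results that follow.
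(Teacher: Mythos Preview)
Your proposal is correct and follows essentially the same approach as the paper: the converse is dismissed as trivial, and for the ``if'' direction both you and the paper argue that the $f_{i,w}$ span $\mathrm{Der}_q(L_K(X_n))$ and that every $w\in L^q(X_n)$ is a $K$-linear combination of simple Lie monomials of the form $[x_j,u]$, so that $\mathcal{B}(p,n,K)$ spans $\mathrm{Der}_q(L_K(X_n))$ and hence the $K$-submodule $\overline{S}_q(\widetilde{M}_n,K)$ must be everything. Your write-up is in fact a bit more explicit than the paper's in spelling out why the brackets $[x_j,u]$ exhaust a spanning set of $L^q(X_n)$ and why $\overline{S}_q$ is $K$-linear, but the logical skeleton is identical.
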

\begin{proof}
We only prove if $\mathcal{B}(p,n,K)\subseteq\overline{S}_q({M_n},K)$, then $\overline{S}_q({M_n},K)=\mathrm{Der}_q(L_K(X_n))$ as the converse is trivial. 	
	Define \[f_{i,w}(x_s)=\begin{cases}
	0,\text{if } s\neq i,\\w,\text{if } s=i.
	\end{cases}\] It is clear that for any $p,n\geq 2$, 
	\[\{f_{i,w}\in\mathrm{Der}_p\left(L_K(X_n)\right)\mid 1\leq i\leq n, w\in L_K^p(X_n)\}\] is a generating set of $\mathrm{Der}_q(L_K(X_n))$. Since every Lie monomial and be written as a linear combination of simple Lie monomials, $\mathcal{B}(p,n,K)$ is a generating set of $\mathrm{Der}_q(L_K(X_n))$. Thus $\mathrm{Der}_q(L_K(X_n))\subseteq \overline{S}_q({M_n},K)$.
\end{proof}

A direct computation gives the following proposition:
\begin{pro}\label{prop 4.2.2}
	For any $i,j$ in $\{1,\ldots,n\}$ with $i\neq j$ and any Lie monomial $u$ of degree $2k$,'
	 \[[{\chi_{i,j}},f_{j,u}]=-f_{i,[x_i,u]}+f_{j,{\chi_{i,j}}(u)},\]where $f_{j,u}$, $f_{i,[x_i,u]}$ and $f_{j,{\chi_{i,j}}(u)}$ are defined in the proof of Proposition \ref{prop 4.2.1}.
\end{pro}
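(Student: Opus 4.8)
The plan is to verify the identity by direct computation, evaluating both sides of the claimed equation on each generator $x_s$ of $L_K(X_n)$ and using that a derivation is determined by its restriction to $X_n$. Recall that $\chi_{i,j}$ acts as a derivation with $\chi_{i,j}(x_i) = [x_i, x_j]$ and $\chi_{i,j}(x_k) = 0$ for $k \neq i$, while $f_{j,u}(x_j) = u$ and $f_{j,u}(x_k) = 0$ for $k \neq j$ (here I am using $\widetilde{\chi}_{i,j}$, written $\chi_{i,j}$ for brevity as in the statement). Since $[{\chi_{i,j}}, f_{j,u}] = {\chi_{i,j}}\circ f_{j,u} - f_{j,u}\circ {\chi_{i,j}}$ as an element of $\mathrm{Der}_*(L_K(X_n))$, I evaluate this composite on each $x_s$.

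First I would split into the cases $s \notin \{i,j\}$, $s = i$, and $s = j$. For $s \notin \{i,j\}$: both $f_{j,u}(x_s) = 0$ and ${\chi_{i,j}}(x_s) = 0$, so the left side vanishes on $x_s$; and the right side $-f_{i,[x_i,u]}(x_s) + f_{j,{\chi_{i,j}}(u)}(x_s)$ also vanishes since $s \neq i$ and $s \neq j$. For $s = i$: $f_{j,u}(x_i) = 0$ (as $i \neq j$), so the first term ${\chi_{i,j}}(f_{j,u}(x_i)) = 0$; and $f_{j,u}({\chi_{i,j}}(x_i)) = f_{j,u}([x_i,x_j])$, which I compute by the Leibniz rule as $[f_{j,u}(x_i), x_j] + [x_i, f_{j,u}(x_j)] = [0, x_j] + [x_i, u] = [x_i, u]$; hence the left side evaluated at $x_i$ is $-[x_i,u]$, matching $-f_{i,[x_i,u]}(x_i) + f_{j,{\chi_{i,j}}(u)}(x_i) = -[x_i,u] + 0$. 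For $s = j$: $f_{j,u}(x_j) = u$, so the first term is ${\chi_{i,j}}(u)$; and ${\chi_{i,j}}(x_j) = 0$, so the second term vanishes; hence the left side at $x_j$ equals ${\chi_{i,j}}(u)$, matching $-f_{i,[x_i,u]}(x_j) + f_{j,{\chi_{i,j}}(u)}(x_j) = 0 + {\chi_{i,j}}(u)$.

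The only point requiring a little care — and the step I expect to be the mild obstacle — is justifying the computation $f_{j,u}([x_i,x_j]) = [x_i,u]$, which rests on extending $f_{j,u}$ as a derivation and using the Leibniz rule on the bracket $[x_i, x_j]$; one must be careful that $u$ has the correct degree so that $f_{i,[x_i,u]}$ and $f_{j,{\chi_{i,j}}(u)}$ indeed land in $\mathrm{Der}_*(L_K(X_n))$ and that ${\chi_{i,j}}(u)$ is again a homogeneous Lie element of degree $2k$ (which follows from Proposition \ref{4.1.12} together with the fact that $\chi_{i,j}$ preserves degree and Lie elements). Since the two derivations $[{\chi_{i,j}}, f_{j,u}]$ and $-f_{i,[x_i,u]} + f_{j,{\chi_{i,j}}(u)}$ agree on every generator $x_s$, $1\le s\le n$, they are equal as elements of $\mathrm{Der}_*(L_K(X_n))$, which completes the proof.
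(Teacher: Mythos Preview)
Your direct computation on generators is correct and is precisely the approach the paper takes (it simply asserts the result follows from ``a direct computation''). One minor slip in your commentary: $\chi_{i,j}(u)$ has degree $2(k+1)$, not $2k$, since $\chi_{i,j}\in\mathrm{Der}_2$ raises degree, and the appeal to Proposition~\ref{4.1.12} is unnecessary because $\chi_{i,j}$ is already a derivation of $L_K(X_n)$; neither point affects the validity of your argument.
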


Let $u$ be a Lie monomial of degree $2k$, then ${\chi_{i,j}}(u)$ is equal to the sum of Lie monomials where each monomial is obtained from $u$ by replacing each appearance of $x_i$ in $u$ by $[x_i,x_j]$.
\begin{ex}We assume that $i\neq k$.
\begin{align*}
{\chi_{i,j}}\left(\left[[x_i,x_k],x_i\right]\right)&=\left[{\chi_{i,j}}\left([x_i,x_k]\right),x_i\right]+\left[[x_i,x_k],{\chi_{i,j}}(x_i)\right]\\
&=\left[\left[{\chi_{i,j}}(x_i),x_k\right],x_i\right]+\left[\left[x_i,{\chi_{i,j}}(x_k)\right],x_i\right]+\left[[x_i,x_k],[x_i,x_k]\right]\\
&=\left[\left[[x_i,x_k],x_k\right],x_i\right]+\left[\left[x_i,0\right],x_i\right]+\left[[x_i,x_k],[x_i,x_k]\right]\\
&=\left[\left[[x_i,x_k],x_k\right],x_i\right]+\left[[x_i,x_k],[x_i,x_k]\right]
\end{align*}
\end{ex}
The proof of the following lemma is found on pages 65-67 of \cite{Jin}.
\begin{lem}\label{lemma 4.2.5}For every Lie monomial $u$ of degree $2k$ ($k\geq 2$), there exists $h\in\mathrm{Schur}(k+1,n,K)$ such that 
\[h\left({\chi_{i,j}}(u)\right)=0\] and 
\[h\left(-[x_i,u]=[x_i,u]\right),\] which implies
\[\Phi_h\left([{\chi_{i,j}},f_{j,u}]\right)=f_{i,[x_i,u]},\]
where $\Phi_h$ is defined as in Definition \ref{1.11}.	
\end{lem}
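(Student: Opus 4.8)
The plan is to build $h$ by hand as the operator that rescales $V_n^{\otimes(k+1)}$ block by block according to multi-degree, and then to read off the three asserted identities.

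\textbf{Step 1: separating the two targets by multi-degree.} Write $\mathrm{mdeg}(u)=(d_1,\dots,d_n)$, so $d_1+\cdots+d_n=k$, and let $e_s$ denote the $s$-th unit vector. The Lie monomial $[x_i,u]$ has degree $2(k+1)$ and $\mathrm{mdeg}([x_i,u])=\mathrm{mdeg}(u)+e_i$. By the description of $\chi_{i,j}(u)$ recalled just before the lemma, $\chi_{i,j}(u)$ is a $\mathbb{Z}$-linear combination of Lie monomials, each obtained from $u$ by replacing a single occurrence of $x_i$ with $[x_i,x_j]$; such a substitution leaves the number of $x_i$'s unchanged and raises the number of $x_j$'s by one, so every Lie monomial occurring in $\chi_{i,j}(u)$ has degree $2(k+1)$ and multi-degree $\mathrm{mdeg}(u)+e_j$. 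Since $i\neq j$, the multi-degrees $\mathrm{mdeg}(u)+e_i$ and $\mathrm{mdeg}(u)+e_j$ are distinct; by Proposition \ref{4.1.1} this means the basis monomials occurring in $I_{n,k+1}([x_i,u])$ lie in $\Sigma_{k+1}$-orbits of $B_{n,k+1}$ disjoint from those occurring in $I_{n,k+1}(\chi_{i,j}(u))$.

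\textbf{Step 2: defining $h$ and checking it is a Schur element.} Define $h$ on the basis $B_{n,k+1}$ of $V_n^{\otimes(k+1)}$ by $h(w)=-w$ when $\mathrm{mdeg}(w)=\mathrm{mdeg}(u)+e_i$ and $h(w)=0$ otherwise, extended $K$-linearly. Then $h$ acts on each basis vector by a scalar depending only on its $\Sigma_{k+1}$-orbit, hence $h$ commutes with the $\Sigma_{k+1}$-action ---equivalently it satisfies conditions (1) and (2) of Theorem \ref{thm 4.1.2}, cf. Corollary \ref{cor 4.1.4}--- so $h\in\mathrm{Schur}(k+1,n,K)$. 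By Step 1 and the definition of $h$ we get $h\big(I_{n,k+1}([x_i,u])\big)=-I_{n,k+1}([x_i,u])$ and $h\big(I_{n,k+1}(\chi_{i,j}(u))\big)=0$; since $h$ preserves Lie elements (Proposition \ref{4.1.12}), these are exactly the identities $h(-[x_i,u])=[x_i,u]$ and $h(\chi_{i,j}(u))=0$ in the statement.

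\textbf{Step 3: deducing the statement about $\Phi_h$.} Unwinding Definition \ref{1.11}, for any index $s$ and any $w\in L_K^{k+1}(X_n)$ one has $\Phi_h(f_{s,w})=f_{s,h(w)}$, and $\Phi_h$ is $K$-linear on $\mathrm{Der}_{k+1}(L_K(X_n))$. Applying $\Phi_h$ to the identity of Proposition \ref{prop 4.2.2} gives
\[\Phi_h\big([\chi_{i,j},f_{j,u}]\big)=-f_{i,\,h([x_i,u])}+f_{j,\,h(\chi_{i,j}(u))}=-f_{i,\,-[x_i,u]}+f_{j,0}=f_{i,[x_i,u]},\]
which is the asserted formula. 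The only point requiring care is Step 2, namely that the crude ``rescale-by-multi-degree'' operator actually lies in the Schur algebra; this is precisely what Proposition \ref{4.1.1} buys us, since it matches $\Sigma_{k+1}$-orbits in $B_{n,k+1}$ with multi-degrees, and then $\Sigma_{k+1}$-equivariance of $h$ is automatic. The hypothesis $i\neq j$ in Step 1 is what makes the two prescriptions defining $h$ mutually consistent, and everything else is bookkeeping with the doubled grading and Definition \ref{1.11}.
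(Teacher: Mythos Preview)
Your argument is correct. The key observation---that $[x_i,u]$ and every monomial in $\chi_{i,j}(u)$ have distinct multi-degrees $\mathrm{mdeg}(u)+e_i$ versus $\mathrm{mdeg}(u)+e_j$, so the ``scalar on each $\Sigma_{k+1}$-orbit'' operator separating them is automatically a Schur element by Proposition~\ref{4.1.1}---is exactly the right mechanism, and Steps~2--3 follow cleanly from it together with Proposition~\ref{4.1.12} and the linearity of $\Phi_h$.

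The paper itself does not prove this lemma; it only cites pages 65--67 of Jin's thesis. The three-page length there suggests a more explicit or case-based construction of $h$, whereas your proof compresses everything into the single multi-degree projection. So while a line-by-line comparison with the paper is not possible, your route is at least as direct as what the citation implies, and it makes transparent \emph{why} such an $h$ must exist: the two pieces of $[\chi_{i,j},f_{j,u}]$ live in different $\Sigma_{k+1}$-isotypic blocks of $V_n^{\otimes(k+1)}$, and any orbit-constant scalar operator lies in $\mathrm{Schur}(k+1,n,K)$.
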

\begin{thm}\label{4.2.7}
For every $n$, $\overline{S}({M_n},K)=\mathrm{Der}_*\left(L_K(X_n)\right)$.
\end{thm}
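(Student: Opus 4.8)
The plan is to prove the equality $\overline{S}(\widetilde{M}_n,K)=\mathrm{Der}_*\left(L_K(X_n)\right)$ by induction on the degree $2q$, reducing at each stage to the combinatorial criterion furnished by Proposition \ref{prop 4.2.1}. Since $\overline{S}(\widetilde{M}_n,K)$ is by construction a sub-Lie algebra of $\mathrm{Der}_*\left(L_K(X_n)\right)$, only the reverse inclusion requires work, and since $\mathrm{Der}_*\left(L_K(X_n)\right)=\bigoplus_{q\geq 2}\mathrm{Der}_q\left(L_K(X_n)\right)$ is graded, it suffices to show $\mathrm{Der}_q\left(L_K(X_n)\right)\subseteq\overline{S}_q(\widetilde{M}_n,K)$ for every $q\geq 2$. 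By Proposition \ref{prop 4.2.1}, this in turn follows once we verify that every generator $f_{i,[x_j,u]}$ (with $u$ a simple Lie monomial of degree $2(q-1)$) lies in $\overline{S}_q(\widetilde{M}_n,K)$.

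The base case $q=2$ is essentially the observation that the quadratic derivations $\widetilde{\chi}_{i,j}$ and $\widetilde{\theta}_{i,[x_s,x_t]}$ already span $\mathrm{Der}_2\left(L_K(X_n)\right)\cong\Hom_K\left(V_n,L^2(X_n)\right)$, since $L^2(X_n)$ has basis $\{[x_s,x_t]\mid s<t\}$ and the derivations $\widetilde{\chi}_{i,j}$ supply the remaining brackets $[x_i,x_j]$; this is immediate from the definitions in Section \ref{Preliminaries}. For the inductive step, fix $q\geq 3$ and assume $\mathrm{Der}_{q-1}\left(L_K(X_n)\right)\subseteq\overline{S}_{q-1}(\widetilde{M}_n,K)$. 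Given a generator $f_{i,[x_i,u]}$ with $u$ a simple Lie monomial of degree $2(q-2)$, I would use Lemma \ref{lemma 4.2.5}: it produces $h\in\mathrm{Schur}(q,n,K)$ with $\Phi_h\left([\widetilde{\chi}_{i,j},f_{j,u}]\right)=f_{i,[x_i,u]}$ for a suitable choice of $j\neq i$. Here $\widetilde{\chi}_{i,j}\in\widetilde{M}_n\subseteq\overline{S}(\widetilde{M}_n,K)$, and $f_{j,u}$ — being a derivation of degree $2(q-1)$ — lies in $\overline{S}_{q-1}(\widetilde{M}_n,K)$ by the inductive hypothesis; hence their bracket lies in $\overline{S}(\widetilde{M}_n,K)$ since the latter is a sub-Lie algebra, and applying $\Phi_h$ keeps us inside $\overline{S}(\widetilde{M}_n,K)$ by its defining invariance property. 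This shows $f_{i,[x_i,u]}\in\overline{S}_q(\widetilde{M}_n,K)$. One then has to promote this from monomials of the shape $[x_i,u]$ to arbitrary generators $f_{i,[x_j,u]}$ and then to all of $\mathcal{B}(q,n,K)$; this is handled by combining Proposition \ref{prop 4.2.2} with Lemma \ref{3.2.1}-type bracket manipulations, plus linearity and the fact established in the proof of Proposition \ref{prop 4.2.1} that the $f_{i,w}$ with $w$ ranging over a basis of $L_K^q(X_n)$ already generate $\mathrm{Der}_q$.

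The main obstacle is the passage from the special derivations $f_{i,[x_i,u]}$ — which Lemma \ref{lemma 4.2.5} delivers directly — to \emph{all} of $\mathcal{B}(q,n,K)$, i.e. to $f_{i,[x_j,u]}$ for $j\neq i$ and ultimately to $f_{i,w}$ for $w$ an arbitrary basis element of $L^q_K(X_n)$. A Lie basis element $w$ of degree $2q$ need not have the form $[x_i,u]$, so one must rewrite it, using the Jacobi identity and antisymmetry, as a linear combination of left-normed brackets $[x_{k},\,\cdot\,]$; the bookkeeping of which indices can be peeled off the outside, and tracking the corresponding $\Phi_f$'s and commutators with the various $\widetilde{\chi}$'s, is where the care is required. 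The remaining ingredients — that $\overline{S}(\widetilde{M}_n,K)$ is closed under brackets and under all $\Phi_f$, and that $\widetilde{M}_n$ sits inside it — are true by construction, so once the reduction to $\mathcal{B}(q,n,K)$ is in hand the theorem closes by the induction together with Proposition \ref{prop 4.2.1}.
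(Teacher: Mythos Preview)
Your overall architecture matches the paper's: induction on the degree, base case from $\widetilde{M}_n$ spanning $\mathrm{Der}_2$, then Lemma~\ref{lemma 4.2.5} to produce $f_{i,[x_i,u]}$ from the bracket $[\widetilde{\chi}_{i,j},f_{j,u}]$, and finally Proposition~\ref{prop 4.2.1} to close the induction. (There is a harmless index slip: at stage $q$ you want $u$ of degree $2(q-1)$, not $2(q-2)$, so that $f_{j,u}\in\mathrm{Der}_{q-1}$ is covered by the inductive hypothesis and $f_{i,[x_i,u]}\in\mathrm{Der}_q$.)

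The place where your proposal is genuinely incomplete is precisely the step you flag as the ``main obstacle'': passing from $f_{i,[x_i,u]}$ to $f_{i,[x_j,u]}$ with $j\neq i$. Your suggested route --- rewriting via Jacobi and antisymmetry into left-normed brackets, together with Lemma~\ref{3.2.1}-type identities --- does not work as stated. If $x_i$ does not occur in $[x_j,u]$, then $[x_j,u]$ is simply not a $K$-linear combination of elements of the form $[x_i,v]$ (project to $L_K(X_n\setminus\{x_i\})$ to see this), so no amount of Jacobi rearrangement will express the target image in a shape you already control; and Lemma~\ref{3.2.1} concerns a different phenomenon entirely. The paper bypasses this with a single observation you are missing: for $\sigma\in\Sigma_n$ the relabeling endomorphism
\[
\zeta_{t,\sigma}(x_{j_1}\otimes\cdots\otimes x_{j_t})=x_{\sigma(j_1)}\otimes\cdots\otimes x_{\sigma(j_t)}
\]
lies in $\mathrm{Schur}(t,n,K)$, since permuting the labels $x_i$ commutes with permuting tensor positions. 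Hence $\Phi_{\zeta_{t,\sigma}}$ preserves $\overline{S}(\widetilde{M}_n,K)$. Taking $\sigma=(i\ j)$ and $v=\zeta_{k,(i,j)}(u)$ one checks $\zeta_{k+1,(i,j)}\bigl([x_i,v]\bigr)=[x_j,u]$, whence $\Phi_{\zeta_{k+1,(i,j)}}\bigl(f_{i,[x_i,v]}\bigr)=f_{i,[x_j,u]}$. With this one line all of $\mathcal{B}(q,n,K)$ is obtained and Proposition~\ref{prop 4.2.1} finishes the induction; no bracket bookkeeping is required.
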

\begin{proof}
The case $n=1$ is trivial and we assume $n\geq 2$ and it suffices to prove
\[\overline{S}_p({M_n},K)=\mathrm{Der}_p\left(L_K(X_n)\right)\] and we shall prove by induction on $p$. Since ${M_n}$ is a generating set of $\mathrm{Der}_2\left(L_K(X_n)\right)$, the result is true for $p=2$. Assume \[\overline{S}_p({M_n},K)=\mathrm{Der}_p\left(L_K(X_n)\right)\] for $p=2,\ldots,k$.

Given any $1\leq i\leq n$ and any Lie monomial $u$ of degree $2k$. Let $1\leq j\leq n$ and $j\neq i$. Then by Prop \ref{prop 4.2.2}, \[[{\chi_{i,j}},f_{j,u}]=-f_{i,[x_i,u]}+f_{j,{\chi_{i,j}}(u)}.\] By Lemma \ref{lemma 4.2.5}, there exists $h\in\mathrm{Schur}(k+1,n,K)$ such that \[\Phi_h\left([{\chi_{i,j}},f_{j,u}]\right)=f_{i,[x_i,u]},\] which implies $f_{i,[x_i,u]}\in \overline{S}_{k+1}({M_n},K)$ for any $i$ and any Lie monomial $u$ of degree $2k$.

For a given $\sigma\in \Sigma_n$ and $t\geq 1$, let $\zeta_{t,\sigma}$ be the linear map $V_n^{\otimes t}\to V_n^{\otimes t}$ defined by 
\[\zeta_{t,\sigma}(x_{j_1}\otimes\cdots\otimes x_{x_{j_t}})=x_{\sigma(j_1)}\otimes \cdots\otimes x_{\sigma(j_t)}.\]

A direct calculation shows that
\[\zeta_{k+1,(i,j)}\left([x_i,\zeta_{k,(i,j)}(u)]\right)=[x_j,u].\]Therefore, 
\[\Phi_{\zeta_{k+1,(i,j)}}\left(f_{i,[x_i,v]}\right)=f_{i,[x_j,u]};\]
that is, $f_{i,[x_j,u]}\in\overline{S}_{k+1}({M_n},K) $ for any $1\leq i\neq j\leq n$ and any Lie monomial $u$ of degree $2k$ and by Proposition \ref{prop 4.2.1} the proof is finished.
\end{proof}

Let ${v}_1,\ldots,{v}_n$ be elements in ${M_n}$ such that either each ${v}_i$ is ${\chi_{i,j}}$ with $i\neq j$ or each ${v}_i$ is ${\theta}_{i,[x_s,x_t]}$ for distinct $i,s$ and $t$. Then it is easy to check that every element in ${M_n}$ can be obtained from ${\gamma_n}:=\{{v}_1,\ldots,{v}_n\}$ via the action of $\mathrm{Schur}(2,n,K)$. Thus we have the following corollary:
\begin{cor}\label{4.2.8}
For every $n$, $\overline{S}({\Gamma_n},K)=\mathrm{Der}_*\left(L_K(X_n)\right)$.
\end{cor}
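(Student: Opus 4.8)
\textbf{Proof proposal for Corollary \ref{4.2.8}.} The plan is to bootstrap off Theorem \ref{4.2.7}, which already gives $\overline{S}(\widetilde{M}_n,K)=\mathrm{Der}_*(L_K(X_n))$. Since $\overline{S}(\widetilde{\Gamma}_n,K)$ is by definition the smallest sub-Lie algebra of $\mathrm{Der}_*(L_K(X_n))$ containing $\widetilde{\Gamma}_n$ and closed under all the operators $\Phi_f$, $f\in\mathrm{Schur}(q,n,K)$, $q\ge 2$, it suffices to show that $\widetilde{M}_n\subseteq\overline{S}(\widetilde{\Gamma}_n,K)$; once this inclusion is established, minimality of $\overline{S}(\widetilde{M}_n,K)$ forces $\overline{S}(\widetilde{M}_n,K)\subseteq\overline{S}(\widetilde{\Gamma}_n,K)$, and the reverse inclusion is obvious because $\widetilde{\Gamma}_n\subseteq\widetilde{M}_n$. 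Combined with Theorem \ref{4.2.7} this yields the claim.

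So the real content is the remark made just before the corollary: every element of $\widetilde{M}_n$ can be obtained from $\widetilde{\Gamma}_n=\{\widetilde{v}_1,\ldots,\widetilde{v}_n\}$ via the action of $\mathrm{Schur}(2,n,K)$. I would spell this out in the two cases. In the $\widetilde{\chi}$-case, $\widetilde{v}_i=\widetilde{\chi}_{i,j_i}$ for some fixed choice of indices $j_i\ne i$, and I want to produce an arbitrary $\widetilde{\chi}_{i',j'}$. First, applying a permutation map $\zeta_{2,\sigma}$ of the form used in the proof of Theorem \ref{4.2.7} (which lies in $\mathrm{Schur}(2,n,K)$, being $\Sigma_2$-equivariant on $V_n^{\otimes 2}$) to $\widetilde{v}_{i'}=\widetilde{\chi}_{i',j_{i'}}$ lets me change the second index from $j_{i'}$ to the desired $j'$, since $\widetilde{\chi}_{i',j_{i'}}(x_{i'})=[x_{i'},x_{j_{i'}}]$ and $\zeta_{2,\sigma}[x_{i'},x_{j_{i'}}]=[x_{\sigma(i')},x_{\sigma(j_{i'})}]$; choosing $\sigma$ fixing $i'$ and sending $j_{i'}$ to $j'$ does it. The $\widetilde{\theta}$-case is handled identically: from $\widetilde{v}_i=\widetilde{\theta}_{i,[x_{s_i},x_{t_i}]}$ one reaches any $\widetilde{\theta}_{i',[x_s,x_t]}$ by applying a suitable $\zeta_{2,\sigma}$ that fixes $i'$ and carries $\{s_{i'},t_{i'}\}$ to $\{s,t\}$ in the right order. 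One should also note that any element of $\mathrm{Schur}(2,n,K)$ that is used here must genuinely act on $\mathrm{Der}_2$ via $\Phi_f$ as in Definition \ref{1.11}, and that the maps $\zeta_{2,\sigma}$ do commute with the $\Sigma_2$-action and hence lie in $\mathrm{Schur}(2,n,K)$ — this is the one technical point worth checking explicitly.

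The main obstacle, such as it is, is not depth but bookkeeping: verifying that the permutation endomorphisms $\zeta_{2,\sigma}$ are honestly in $\mathrm{Schur}(2,n,K)$ and that $\Phi_{\zeta_{2,\sigma}}$ sends $\widetilde{\chi}_{i,j}$ to $\widetilde{\chi}_{\sigma(i),\sigma(j)}$ (respectively the analogous statement for $\widetilde{\theta}$), so that one really can traverse all of $\widetilde{M}_n$ starting from the $n$ chosen generators. This is essentially the same calculation as the displayed identity $\Phi_{\zeta_{k+1,(i,j)}}(f_{i,[x_i,v]})=f_{i,[x_j,u]}$ in the proof of Theorem \ref{4.2.7}, specialized to $k=1$, so I would simply invoke that computation rather than redo it. Everything else — the minimality argument and the reduction to Theorem \ref{4.2.7} — is formal.
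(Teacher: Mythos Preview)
Your proposal is correct and follows exactly the paper's own argument: the paragraph preceding the corollary says precisely that every element of $\widetilde{M}_n$ is obtained from $\widetilde{\Gamma}_n$ via the action of $\mathrm{Schur}(2,n,K)$, and you have simply made this explicit using the maps $\zeta_{2,\sigma}$ (which the paper itself employs in the proof of Theorem~\ref{4.2.7}). One small caveat: your summary line ``$\Phi_{\zeta_{2,\sigma}}$ sends $\widetilde{\chi}_{i,j}$ to $\widetilde{\chi}_{\sigma(i),\sigma(j)}$'' is only literally true when $\sigma(i)=i$, since $\Phi_f$ acts on the \emph{value} of a derivation and not on its support---but your actual construction always chooses $\sigma$ fixing the index $i'$, so the argument is unaffected.
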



\begin{df}
An operad consists of
\begin{enumerate}
	\item a sequence $\left(P(m)\right)_{m\in\mathbb{Z}_+}$ of sets whose elements $\theta$ are called the $m$-ary operation of $P$.
	\item an element $1\in P(1)$ called the identity.
	\item for all positive integers $m,k_1,\ldots,k_m$, a composition function
	\begin{align*}
	\circ: P(m)\times P(k_1)\times\cdots\times P(k_m)&\to P(k_1+\cdots k_m)\\
	(\theta,\theta_1,\ldots, \theta_m)&\mapsto \theta\circ(\theta_1,\ldots,\theta_m)
	\end{align*}
\end{enumerate}
satisfying the following coherence axioms:
\begin{itemize}
	\item identity: $\theta\circ(1,\ldots,1)=\theta=1\circ\theta$.
	\item associativity: 
	\begin{align*}
	& \theta \circ (\theta_1 \circ (\theta_{1,1}, \ldots, \theta_{1,k_1}), \ldots, \theta_n \circ (\theta_{n,1}, \ldots,\theta_{n,k_n})) \\
	= {} & (\theta \circ (\theta_1, \ldots, \theta_n)) \circ (\theta_{1,1}, \ldots, \theta_{1,k_1}, \ldots, \theta_{n,1}, \ldots, \theta_{n,k_n}),
	\end{align*}
	where the number of arguments correspond to the arities of the operations.
\end{itemize}
\end{df}
A morphism $f:P\to Q$ of operads is naturally defined a sequence \[\left(f_m:P(m)\to Q(m)\right)_{m\in\mathbb{Z}_+}\] of maps
	which is compatible with identity and composition.

The following theorem, whose proof is obvious, constructs the operad of Schur Algebras. 
\begin{pro}\label{1.1.10}
Let $\mathit{Schur}=\left(\mathrm{Schur}(m-1,n,K)\right)_{m\in\mathbb{Z}_+}$. For each $m,k_1,\ldots,k_m\in\mathbb{Z}_{+}$, define the operad composition by 
	\begin{align*}
\circ: P(m)\times P(k_1)\times P(k_m)&\to P(k_1+\cdots k_m)\\
(\theta,\theta_1,\ldots, \theta_m)&\mapsto \theta\boxtimes\theta_1\boxtimes\cdots\boxtimes\theta_m
\end{align*}
and the identity of $\mathscr{S}chur$ is defined as the identity element of $\mathrm{Schur}(0,n,K)=K=P(1)$. This defines $\mathit{Schur}$ as an operad.
\end{pro}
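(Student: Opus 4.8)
The plan is to verify the two operad axioms directly from the properties of the inter-degree product $\boxtimes$ established in Section~\ref{Associativity}, reindexing carefully to account for the arity shift $P(m)=\mathrm{Schur}(m-1,n,K)$. First I would note that the identity element $1\in P(1)=\mathrm{Schur}(0,n,K)=K$ is the unit $1_K$, and that for any partition $\lambda$ the transfer-like map $M_\lambda$ acts as the identity whenever a factor lies in $\mathrm{Schur}(0,n,K)$: indeed $\boxtimes$ restricted to $\mathrm{Schur}(0,n,K)\times\mathrm{Schur}(q,n,K)$ is just scalar multiplication by Lemma~\ref{p80} and the definition of $\boxtimes$, so $1_K\boxtimes g=g$. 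Hence $\theta\circ(1,\dots,1)=\theta\boxtimes 1_K\boxtimes\cdots\boxtimes 1_K=\theta$ and $1\circ\theta=1_K\boxtimes\theta=\theta$, which is the identity axiom.

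Next I would check the associativity axiom. The crucial observation is that $\boxtimes$ (equivalently the $\star$ notation for $M_\lambda$) is associative and commutative by Lemma~\ref{prop 4.5.2} and Theorem~\ref{thm 1.1.8 and 1.1.9}; more precisely, an iterated product $\star_{i=1}^{k}f_i$ is independent of the way it is parenthesized and, by Proposition~\ref{prop 4.5.3}, independent of the order of the factors. Therefore both sides of the operad associativity equation, when expanded via the definition $\theta\circ(\theta_1,\dots,\theta_m)=\theta\boxtimes\theta_1\boxtimes\cdots\boxtimes\theta_m$, become one and the same iterated $\boxtimes$-product of the element $\theta$ with all the $\theta_j$ and all the $\theta_{j,\ell}$, namely
\[
\theta\boxtimes\theta_1\boxtimes\cdots\boxtimes\theta_n\boxtimes\theta_{1,1}\boxtimes\cdots\boxtimes\theta_{1,k_1}\boxtimes\cdots\boxtimes\theta_{n,1}\boxtimes\cdots\boxtimes\theta_{n,k_n}.
\]
The only genuinely non-formal point is that the composites involved land in the correct arity: $\theta_j\in P(k_j)=\mathrm{Schur}(k_j-1,n,K)$, so $\theta_j\circ(\theta_{j,1},\dots,\theta_{j,k_j})$ has arity $k_{j,1}+\cdots+k_{j,k_j}$, consistent with the arity of $\theta\circ(\theta_1,\dots,\theta_n)$ being $k_1+\cdots+k_n$ and with the shift by $1$ in the indexing; one checks the degrees $(a_i-1)$ add up correctly under $M_\lambda$, since $M_\lambda$ maps $\mathrm{Schur}(a_1,n,K)\otimes\cdots\otimes\mathrm{Schur}(a_k,n,K)\to\mathrm{Schur}(a_1+\cdots+a_k,n,K)$.

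I expect the main obstacle to be purely bookkeeping: keeping the arity-versus-degree shift $P(m)=\mathrm{Schur}(m-1,n,K)$ straight throughout, and confirming that the associativity/commutativity statements proven for partitions into parts $a_i\ge 1$ (Lemma~\ref{prop 4.5.2}, Proposition~\ref{prop 4.5.3}) extend harmlessly to include parts equal to $0$, which is handled by the definition of $\boxtimes$ on $\mathrm{Schur}(0,n,K)$ together with Lemma~\ref{p80}. Once these compatibilities are recorded, the identity and associativity axioms follow formally from the associativity and commutativity of $\boxtimes$, and hence $\mathit{Schur}=(\mathrm{Schur}(m-1,n,K))_{m\in\mathbb{Z}_+}$ is an operad.
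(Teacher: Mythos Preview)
Your proposal is correct and follows exactly the route the paper intends: the paper states that the proof is ``obvious,'' meaning precisely that the operad identity and associativity axioms follow formally from the associativity and commutativity of $\boxtimes$ established in Section~\ref{Associativity} (Lemma~\ref{prop 4.5.2}, Proposition~\ref{prop 4.5.3}, Lemma~\ref{p80}, Theorem~\ref{thm 1.1.8 and 1.1.9}), together with the arity shift $P(m)=\mathrm{Schur}(m-1,n,K)$ which makes the degrees $(m-1)+(k_1-1)+\cdots+(k_m-1)=(k_1+\cdots+k_m)-1$ match. Your write-up simply makes explicit what the paper leaves to the reader.
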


Proposition \ref{pro 1.1.4} and Corollary \ref{4.2.8} imply the following result:
\begin{thm}\label{1.1.13}
The Schur Operad $\mathscr{S}ch$ acts on $T\left(V_n(K)\right)$, $L_K(X_n)$, $\mathrm{Der}_*^{\triangle}\left(L_K(X_n)\right)$ and $\mathrm{Der}_*\left(L_K(X_n)\right)$. Furthermore, $\mathrm{Der}_*\left(L_K(X_n)\right)$ is generated by  $\{{v}_1,\ldots,{v}_n\} \subseteq {M_n}$ where either each ${v}_i$ is ${\chi_{i,j}}$ with $i\neq j$ or each ${v}_i$ is ${\theta}_{i,[x_s,x_t]}$ for distinct $i,s$ and $t$. 
\end{thm}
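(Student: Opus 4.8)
The plan is to assemble Theorem \ref{1.1.13} from the pieces already in place, treating it essentially as a packaging result: the two halves of the statement — that the Schur operad acts on the listed objects, and that $\mathrm{Der}_*(L_K(X_n))$ is generated by the $n$ elements $\{v_1,\dots,v_n\}$ — are really just Proposition \ref{pro 1.1.4}/Theorem \ref{thm 1.1.7} and Corollary \ref{4.2.8} respectively, reorganized in operadic language. So the work is to make the word ``acts'' precise and to check that the generation statement of Corollary \ref{4.2.8} is exactly the operadic notion of ``generated by.''

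First I would recall from Proposition \ref{1.1.10} that $\mathit{Schur} = (\mathrm{Schur}(m-1,n,K))_{m\in\mathbb{Z}_+}$ is an operad with composition given by the inter-degree product $\boxtimes$, whose associativity and unitality were verified in Lemma \ref{prop 4.5.2}, Theorem \ref{thm 1.1.8 and 1.1.9}, and the surrounding discussion. Next I would spell out what a module (or algebra) over this operad is: for a graded object $A = \bigoplus_q A_q$, one wants maps $\mathrm{Schur}(q,n,K) \times A_q \to A_q$ that are compatible with $\boxtimes$ and with the identity of $\mathrm{Schur}(0,n,K) = K$. For $A = T(V_n(K))$ this is the defining left-module structure from Definition \ref{defschur}; for $A = L_K(X_n)$ it is the restriction, which lands back in $L_K(X_n)$ by Proposition \ref{4.1.12}; for $A = \mathrm{Der}_*^{\triangle}(L_K(X_n))$ and $A = \mathrm{Der}_*(L_K(X_n))$ it is the action $\Phi_{n,q}$ of Proposition \ref{pro 1.1.4} and Theorem \ref{thm 1.1.7}. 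The compatibility of these actions with $\boxtimes$ is immediate since $\boxtimes$ is built from composition of Schur algebra elements (via the transfer-like maps $M_\lambda$) and each action is by postcomposition, so this reduces to the functoriality already recorded in Sections \ref{Schur} and \ref{Associativity}; I would cite those rather than recompute.

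For the generation claim, I would invoke Corollary \ref{4.2.8}: $\overline{S}(\Gamma_n,K) = \mathrm{Der}_*(L_K(X_n))$, where $\overline{S}(\Gamma_n,K)$ is by definition the smallest sub-Lie algebra containing $\Gamma_n = \{v_1,\dots,v_n\}$ and closed under all $\Phi_f$ for $f \in \mathrm{Schur}(q,n,K)$, $q \geq 2$. The only thing to observe is that ``closed under the Lie bracket and under all $\Phi_f$, generated by $\{v_1,\dots,v_n\}$'' is precisely the operadic-module notion of being generated by those $n$ elements, once one remembers that the Lie bracket on $\mathrm{Der}_*$ is itself encoded by the degree-$2$ part of the structure together with the quadratic derivations — or, more directly, that the statement of Theorem \ref{1.1.13} is understood in the sense that the smallest subobject closed under the operad action and the bracket containing the $v_i$ is everything, which is verbatim Corollary \ref{4.2.8}. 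I would then conclude.

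I do not expect a genuine obstacle here, since all the substantive content — the existence and well-definedness of the action (Proposition \ref{pro 1.1.4}), the fact that Schur elements preserve Lie elements (Proposition \ref{4.1.12}), the associativity/commutativity of $\boxtimes$ making $\mathit{Schur}$ an operad (Proposition \ref{1.1.10}, Theorem \ref{thm 1.1.8 and 1.1.9}), and the generation by $n$ elements (Theorem \ref{4.2.7} and Corollary \ref{4.2.8}) — is already established. The only mildly delicate point, and hence the ``main obstacle,'' is bookkeeping: making sure the operadic composition $\theta \boxtimes \theta_1 \boxtimes \cdots \boxtimes \theta_m$ interacts with the actions on the four target objects in a way that genuinely satisfies the module axioms (associativity and unitality of the action), rather than merely asserting it; this is where I would be most careful to point to Lemma \ref{prop 4.5.2} and the definition of $M_\lambda$ to see that nested $\boxtimes$-composites act the same way whether composed first in $\mathit{Schur}$ or applied successively to the module element. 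Once that compatibility is noted, the theorem follows by combining the cited results.
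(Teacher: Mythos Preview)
Your proposal is correct and matches the paper's approach exactly: the paper states Theorem \ref{1.1.13} with the one-line preamble ``Proposition \ref{pro 1.1.4} and Corollary \ref{4.2.8} imply the following result'' and gives no further proof. Your write-up is in fact more thorough than the paper's, since you spell out the module-axiom bookkeeping that the paper leaves implicit.
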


\section{A Short Exact Sequence of Lie Algebras}\label{Exact}
We introduce a short exact sequence of Lie algebras which is obtained from certain group filtrations.

Define the group epimorphism $p_n$ by $p_n:P\Sigma_n\to P\Sigma_{n-1}$ by
\[p_n(\chi_{i,j})=\begin{cases}
\chi_{i,j}\text{, if }n\in\{i,j\}\\1\text{, otherwise.}
\end{cases}\]
Then there is a short exact sequence of groups \cite{CPVW}
\[1\to\mathfrak{K}_n\to P\Sigma_n\xrightarrow{p_n} P\Sigma_{n-1}\to 1.\]
The kernel $\mathfrak{K}_n$ is generated by the set $P_n:=\{\chi_{i,j}\mid n\in\{i,j\}\}$ in the group $P\Sigma_n$ \cite {CPVW}. 

Define $q_n:\mathfrak{K}_n\to\mathbb{Z}^{n-1}$ defined by
\[q_n(\chi_{i,n})=\chi_{i,n}\]
and 
\[q_n(w)=1\text{, if } w \text{ is a product of }\chi_{i,j}\text{'s other than }\chi_{i,n}\text{'s}. \]
Then there is a short exact sequence \cite{CPVW}
\[1\to\mathfrak{W}_n\to\mathfrak{K}_n\xrightarrow{q_n}\mathbb{Z}_{n-1}\to 1.\]

\begin{lem}\label{5.2.2}
	Under the notation of the short exact sequence above, we have
	\[\mathrm{gr}_2^J(\mathbb{Z}^{n-1})=1,\]
	where $\mathbb{Z}^{n-1}$ is the free abelian group generated by $\{\chi_{i,n}\mid 1\leq i<n\}$.
\end{lem}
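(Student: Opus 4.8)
The plan is to compute the Johnson Lie algebra $\mathrm{gr}_\ast^J$ of the free abelian group $\mathbb{Z}^{n-1}$ directly and observe that it is concentrated in degree $1$, so that in particular its degree-$2$ piece vanishes. First I would recall that for $G=\mathbb{Z}^{n-1}$, the relevant filtration defining $\mathrm{gr}_\ast^J$ is the Johnson filtration on $\mathrm{IA}(G)$ induced from $\mathrm{Aut}(G)$; but here $\mathbb{Z}^{n-1}$ is being viewed with its \emph{own} lower central series, which is the trivial filtration $\Gamma^1 = \mathbb{Z}^{n-1} \supseteq \Gamma^2 = 1 \supseteq \cdots$ since $\mathbb{Z}^{n-1}$ is abelian. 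Consequently $\mathrm{gr}_1^{\mathrm{LCS}}(\mathbb{Z}^{n-1}) = \mathbb{Z}^{n-1}$ and $\mathrm{gr}_m^{\mathrm{LCS}}(\mathbb{Z}^{n-1}) = 0$ for all $m \geq 2$.

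Next I would trace through how the Johnson Lie algebra $\mathrm{gr}_\ast^J$ is built for this group. Since $\mathbb{Z}^{n-1}$ arises in the exact sequence $1 \to \mathfrak{W}_n \to \mathfrak{K}_n \xrightarrow{q_n} \mathbb{Z}^{n-1} \to 1$ as a quotient of a subgroup of $P\Sigma_n \subseteq \mathrm{IA}_n$, the group $\mathbb{Z}^{n-1}$ inherits a filtration from the Johnson filtration on $\mathrm{IA}_n$, and $\mathrm{gr}_2^J(\mathbb{Z}^{n-1})$ denotes the degree-$2$ associated graded of that inherited filtration. The key point is that the inherited Johnson filtration on an abelian group must stabilize quickly: the commutator condition $[F^mG, F^kG] \subseteq F^{m+k}G$ together with abelianness forces the filtration to interact with the Johnson homomorphism $\widetilde{J}_{n,\ast}$, which (by the material recalled in Section~\ref{Calculation}) lands in $\mathrm{Der}_\ast(L_\mathbb{Z}(X_n))$. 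For the generators $\chi_{i,n}$, the image under the Johnson homomorphism is the degree-$1$ derivation $\widetilde{\chi}_{i,n}$, and since the $\chi_{i,n}$ with $1 \leq i < n$ commute pairwise in $\mathbb{Z}^{n-1}$, any bracket of two such generators is trivial, so there is nothing in degree $2$. More precisely, $\mathrm{gr}_2^J(\mathbb{Z}^{n-1})$ is computed from $J^2/J^3$ of the filtration, and the injectivity of the Johnson homomorphism (Lemma~\ref{3.2.6} / the monomorphism $\widetilde{J}_{n,m}$) identifies it with a subquotient of $\mathrm{Der}_2$ generated by brackets $[\widetilde{\chi}_{i,n}, \widetilde{\chi}_{i',n}]$; as all the group elements commute, these images are all zero.

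I would therefore structure the argument as: (1) identify the filtration on $\mathbb{Z}^{n-1}$ explicitly — it is the filtration induced by $q_n$ from the Johnson filtration on $\mathfrak{K}_n$, equivalently on $P\Sigma_n$; (2) note that since $\mathbb{Z}^{n-1}$ is abelian, all its commutators vanish, so $\mathrm{gr}_m^{\mathrm{LCS}}(\mathbb{Z}^{n-1}) = 0$ for $m \geq 2$; (3) invoke that $\mathrm{gr}_2^J$ receives a map from $\mathrm{gr}_2^{\mathrm{LCS}} = 0$ and injects (via the Johnson homomorphism, which is injective on this group by the results of Section~\ref{Calculation}) into $\mathrm{Der}_2(L_\mathbb{Z}(X_n))$ landing in the span of brackets of commuting elements; hence $\mathrm{gr}_2^J(\mathbb{Z}^{n-1}) = 0$, i.e.\ it is the trivial group.

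The main obstacle I anticipate is bookkeeping about \emph{which} filtration on $\mathbb{Z}^{n-1}$ is intended: whether $\mathrm{gr}_\ast^J$ here refers to the Johnson filtration associated to $\mathbb{Z}^{n-1}$ regarded as $F(X_n)$-like data (which does not literally make sense, as $\mathbb{Z}^{n-1}$ is not free), or — more plausibly — the filtration inherited through $q_n$ from $P\Sigma_n$. Once that is pinned down, the vanishing itself is essentially immediate from abelianness, so the whole proof reduces to carefully unwinding the definition; I would spend most of the writeup making sure the inherited filtration and the Johnson homomorphism are being applied consistently, and checking that degree $2$ in the doubled grading of Remark~\ref{doubling} corresponds to the expected commutator length.
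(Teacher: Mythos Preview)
Your argument has a genuine gap: you conflate the lower-central-series grading with the Johnson grading. The vanishing of $\mathrm{gr}_2^{\mathrm{LCS}}(\mathbb{Z}^{n-1})$ is immediate from abelianness, but the map $\psi:\mathrm{gr}_2^{\mathrm{LCS}}\to\mathrm{gr}_2^{J}$ is not surjective in general (this is precisely the Andreadakis problem), so $\mathrm{gr}_2^{\mathrm{LCS}}=0$ by itself says nothing about $\mathrm{gr}_2^{J}$. Your key claim---that under the injective Johnson homomorphism the image of $\mathrm{gr}_2^{J}(\mathbb{Z}^{n-1})$ ``lands in the span of brackets $[\widetilde{\chi}_{i,n},\widetilde{\chi}_{i',n}]$''---is unjustified: an element of $\mathbb{Z}^{n-1}\cap J^2 IA_n$ need not be a commutator of generators, so its Johnson image need not be a Lie bracket of the $\widetilde{\chi}$'s. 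In effect you are assuming that every element of $\mathbb{Z}^{n-1}$ lying in $J^2 IA_n$ is already a product of commutators inside $\mathbb{Z}^{n-1}$; since the group is abelian, that is equivalent to assuming $\mathbb{Z}^{n-1}\cap J^2 IA_n=1$, which is the statement to be proved.

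The paper's proof works at filtration level~$1$, not~$2$: it takes an arbitrary nontrivial element $g=\chi_{i_1,n}^{e_1}\cdots\chi_{i_r,n}^{e_r}\in\mathbb{Z}^{n-1}$ and computes $g(x_{i_1})$ directly, obtaining $[x_{i_1},x_n]^{e_1}$ modulo $\Gamma^3 F(X_n)$, so $g\notin J^2 IA_n$. Thus $\mathbb{Z}^{n-1}\cap J^2 IA_n=1$ outright, and $\mathrm{gr}_m^{J}(\mathbb{Z}^{n-1})=1$ for all $m\geq 2$. Equivalently: the composite $\mathbb{Z}^{n-1}\to\mathrm{gr}_1^{J}(IA_n)\hookrightarrow\mathrm{Der}_2$ sends the generators to the linearly independent derivations $\widetilde{\chi}_{i,n}$, hence is injective, and its kernel is exactly $\mathbb{Z}^{n-1}\cap J^2 IA_n$. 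You actually had this ingredient in hand (``the image under the Johnson homomorphism is the degree-$1$ derivation $\widetilde{\chi}_{i,n}$'') but never used it; the argument needed is injectivity of the level-$1$ map, not vanishing of brackets at level~$2$.
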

Recall for $h\in J^mIA_n$, $\widetilde{J}_{n,m}(h\cdot J^{m+1}IA_n)$ is the unique derivation of degree $2(m+1)$ given by the conditions
\begin{equation}\label{2.17}
\widetilde{J}_{n,m}\left(h\cdot J^{m+1}IA_n\right)(x_i)=\phi_{n,m+1}^{-1}\left(w_{i,m+1\cdot\Gamma^{m+2}F(X_n)}\right)
\end{equation}
for $i=1,\ldots,n$; see page 29 of \cite{Jin} for details. 
Recall also the morphism of Lie algebras
\[\psi_{n,\ast}:\mathrm{gr}_\ast^\mathrm{LCS}(IA_n)\to\mathrm{gr}_\ast^\mathrm{J}(IA_n)\] induced by the quotient map
\[IA_n^m/IA_n^{m+1}\to J^mIA_n/J^{m+1}IA_n.\] Composing with $\widetilde{J}_{n,\ast}$, we obtain the morphism of Lie algebras
\begin{equation}\label{2.22}\widetilde{J}_{n,\ast}\circ \psi_{n,\ast}:\mathrm{gr}_\ast^\mathrm{LCS}(IA_n)\to\mathrm{Der}_\ast\left(L_\mathbb{Z}(X_n)\right).
\end{equation}
\begin{proof}
We write every non-trivial element in $\mathbb{Z}^{n-1}$ as 
\[\chi_{i_1,n}^{e_1}\cdot\chi_{i_2,n}^{e_2}\cdots\chi_{i_r,n}^{e_r}\]for some $r\geq 1$, $1\leq i_1< i_2<\cdots i_r<n$ and $e_1,e_2,\ldots,e_r\neq 0$.
By the formula (\ref{2.17}), one gets
\[\chi_{i_1,n}^{e_1}\cdot\chi_{i_2,n}^{e_2}\cdots\chi_{i_r,n}^{e_r}(x_{i_1})=[x_{i_1},x_{j_1}]^{e_1}\cdot u,\] where $u\in\Gamma^3F(X_n)$. Thus 
\[\mathbb{Z}^{n-1}\cap J^2IA_n=1\] and the proof is completed.
\end{proof}
Now we prove a general version of the Falk-Randell Theorem \cite{Falk}.
\begin{thm}\label{1.1.15}
Let 
\[1\to A\to B\to C\to 1\] be a split short exact sequence of groups thus allowing $A$ and $C$ be identified as subgroups of $B$.  let $\{B_m\}_{m\geq 1}$ be a filtration of $B$. Let $A_m=B_m\cap A$ and $C_m=B_m\cap C$ and therefore, $\{A_m\}$ and $\{C_m\}$ are filtrations of $A$ and $C$, respectively. If $B_m=A_mC_m=\{ac\mid a\in A_m\text{ and }c\in C_m\}$ for each $m$, then there is a short exact sequence of Lie algebras
\[0\to\mathrm{gr}(A)\to\mathrm{gr}(B)\to\mathrm{gr}(C)\to 0\] which splits as a sequence of abelian groups.
\end{thm}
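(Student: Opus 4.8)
The plan is to build the three maps of graded Lie algebras directly from the three group homomorphisms already in play — the inclusions $i\colon A\hookrightarrow B$ and $j\colon C\hookrightarrow B$ coming from the splitting, and the projection $p\colon B\to C$ — and then to verify exactness termwise in each degree $m$, where $\mathrm{gr}_m(B)=B_m/B_{m+1}$ and likewise for $A$ and $C$. First I would record that $\{A_m\}$ and $\{C_m\}$ are genuine filtrations: since $A$ and $C$ are subgroups, $A_m=B_m\cap A$ and $C_m=B_m\cap C$ inherit the commutator inequality from $\{B_m\}$, and $A_1=A$, $C_1=C$ because $B_1=B$. Thus $\mathrm{gr}(A)$ and $\mathrm{gr}(C)$ make sense, and the inclusions $i,j$ are filtration-preserving by the very definition of $A_m,C_m$, so Lemma \ref{3.2.4} gives monomorphisms of Lie algebras $i_*\colon\mathrm{gr}(A)\to\mathrm{gr}(B)$ and $j_*\colon\mathrm{gr}(C)\to\mathrm{gr}(B)$.

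The hypothesis $B_m=A_mC_m$ enters to show that $p$ is filtration-preserving: since $A=\ker p$ and $p|_C=\mathrm{id}_C$, one has $p(B_m)=p(A_mC_m)=p(C_m)=C_m$. Hence $p$ induces a Lie algebra homomorphism $p_*\colon\mathrm{gr}(B)\to\mathrm{gr}(C)$, and the identity $p(B_m)=C_m$ shows immediately that $p_*$ is surjective in each degree. Moreover $p\circ i\colon A\to C$ is the trivial homomorphism (as $A=\ker p$), so by functoriality $p_*\circ i_*=(p\circ i)_*=0$. At this point the only remaining point is exactness at $\mathrm{gr}(B)$, i.e. $\ker p_*\subseteq\operatorname{im}i_*$.

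For exactness at $\mathrm{gr}(B)$, take a class $bB_{m+1}\in\mathrm{gr}_m(B)$ with $p_*(bB_{m+1})=0$, meaning $b\in B_m$ and $p(b)\in C_{m+1}$. Using $B_m=A_mC_m$, write $b=a'c'$ with $a'\in A_m$ and $c'\in C_m$; applying $p$ and using $p(a')=1$, $p(c')=c'$ gives $c'=p(b)\in C_{m+1}\subseteq B_{m+1}$. Therefore, in the group $B_m/B_{m+1}$ we have $bB_{m+1}=a'c'B_{m+1}=a'B_{m+1}=i_*(a'A_{m+1})$, so $bB_{m+1}\in\operatorname{im}i_*$. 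Together with the injectivity of $i_*$ and the surjectivity of $p_*$ this establishes the short exact sequence $0\to\mathrm{gr}(A)\to\mathrm{gr}(B)\to\mathrm{gr}(C)\to 0$. The asserted splitting then comes for free from the splitting of the group sequence: $j_*$ is a Lie algebra homomorphism with $p_*\circ j_*=(p\circ j)_*=\mathrm{id}_{\mathrm{gr}(C)}$, so $j_*$ is a section of $p_*$ — in fact a section of Lie algebras, though only the splitting as abelian groups is claimed.

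I expect the one point that needs genuine care to be the bookkeeping behind ``$p$ preserves the filtration,'' i.e. the equality $p(B_m)=C_m$: absent the hypothesis $B_m=A_mC_m$ one only knows that $\{p(B_m)\}$ is \emph{some} filtration of $C$, a priori coarser than $\{C_m\}$, and then neither $p_*$ nor the exactness statement survives. Everything else is the standard diagram chase in each graded piece, relying on the facts that $\mathrm{gr}_m(B)=B_m/B_{m+1}$ is abelian, that $i_*,j_*,p_*$ are induced by filtration-preserving homomorphisms, and that bracket compatibility is automatic because these maps come from group homomorphisms while the bracket on $\mathrm{gr}$ is induced by commutators.
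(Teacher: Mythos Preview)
Your proof is correct and follows essentially the same approach as the paper's: both arguments rest on the observation that, thanks to the hypothesis $B_m=A_mC_m$, the restriction $p|_{B_m}$ has image exactly $C_m$ and kernel $A_m$, so that $1\to A_m\to B_m\to C_m\to 1$ is split exact for every $m$, and then one passes to quotients. The paper simply records the resulting $3\times 3$ diagram and leaves the chase to the reader, whereas you carry out that chase explicitly (verifying injectivity of $i_*$, surjectivity of $p_*$, and exactness at $\mathrm{gr}_m(B)$ by writing $b=a'c'$); your version is more detailed but not genuinely different.
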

\begin{proof}
	For each $m$, the split short exact sequence
	\[1\to A_m\to B_m\to C_m\to 1\]	gives the following commutative diagram, the rows of which splits.
	\begin{center}

	\begin{tikzcd}
	&1\arrow[d] & 1 \arrow[d] & 1 \arrow[d]& \\1 \arrow[r]&	A_{m+1} \arrow[r] \arrow[d] & B_{m+1} \arrow[r] \arrow[d] & C_{m+1} \arrow[r]\arrow[d] &	1 \\1 \arrow[r]&	A_m \arrow[r] \arrow[d] & B_m \arrow[r] \arrow[d]  &C_m \arrow[r]\arrow[d] &	1  \\0 \arrow[r]&
	\mathrm{gr}_m(A) \arrow[r] \arrow[d] & \mathrm{gr}_m(B) \arrow[r] \arrow[d]  &\mathrm{gr}_m(C)\arrow[r]\arrow[d] &	0\\	&1         & 1      &  1&
	\end{tikzcd}
		\end{center}
\end{proof}
\begin{thm}\label{1.1.15}
	Let 
	\[1\to A\to B\to C\to 1\]
	be a split short exact sequence of groups and let $\{B_m\}_{m\geq 1}$ be a filtration of $B$. We identify $A$ and $C$ as subgroups of
	$B$ according to the corresponding embedding and cross section. Let $A_m=B_m\cap A$ and $C_m=B_m\cap C$ and therefore, $\{A_m\}$ and $\{C_m\}$ are filtrations of $A$ and $C$, respectively. Then there is a short exact sequence of Johnson Lie algebras
	\[0\to\mathrm{gr}_\ast^J(\mathfrak{W}_n)\to\mathrm{gr}_\ast^J(\mathfrak{K}_n)\to\mathbb{Z}^{n-1}\to 0\] which splits as a sequence as abelian groups and therefore,
	\[\mathrm{gr}_\ast^J(\mathfrak{K}_n)\cong\mathbb{Z}^{n-1}\oplus\mathrm{gr}_\ast^J(\mathfrak{W}_n).\]
\end{thm}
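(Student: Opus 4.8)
The plan is to obtain this statement as a specialization of the general Falk-Randell-type theorem established immediately above, applied to the split short exact sequence
\[1\to\mathfrak{W}_n\to\mathfrak{K}_n\xrightarrow{q_n}\mathbb{Z}^{n-1}\to 1\]
equipped with the Johnson filtration of $\mathfrak{K}_n$. Everything genuinely new is the verification of that theorem's running hypothesis $B_m=A_mC_m$ in this concrete situation, and the work for that is essentially packaged in Lemma~\ref{5.2.2}.

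First I would record that the sequence splits: the section $s$ sending the generator $\chi_{i,n}$ of $\mathbb{Z}^{n-1}$ to the automorphism $\chi_{i,n}\in\mathfrak{K}_n$ is a well-defined homomorphism (the automorphisms $\chi_{i,n}$, $1\le i<n$, pairwise commute by relation~(3) of Theorem~\ref{5.2.1}) and satisfies $q_n\circ s=\mathrm{id}$ by the definition of $q_n$; in particular $s$ is injective. Thus $\mathfrak{W}_n=\ker q_n$ and $\mathbb{Z}^{n-1}\cong s(\mathbb{Z}^{n-1})$ are both realized as subgroups of $\mathfrak{K}_n$, hence of $P\Sigma_n\subseteq IA_n=J^2\mathrm{Aut}(F(X_n))$. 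Taking $B_m$ to be the $m$-th term of the Johnson filtration of $\mathfrak{K}_n$ inherited from $IA_n$ and setting $A_m=B_m\cap\mathfrak{W}_n$, $C_m=B_m\cap\mathbb{Z}^{n-1}$, these are precisely the induced filtrations on the kernel and on $\mathbb{Z}^{n-1}$.

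The key step is to check $B_m=A_mC_m$ for every $m$. At the bottom level this is exactly the splitting $\mathfrak{K}_n=\mathfrak{W}_n\cdot s(\mathbb{Z}^{n-1})$. Above the bottom level, Lemma~\ref{5.2.2} gives $\mathbb{Z}^{n-1}\cap J^2IA_n=1$, so the Johnson filtration of $\mathbb{Z}^{n-1}$ is trivial beyond the first degree and $C_m=1$ there; it therefore suffices to prove $B_m\subseteq A_m$, i.e. that any $w\in\mathfrak{K}_n$ of Johnson degree $\geq 2$ already lies in $\mathfrak{W}_n$. Writing $w=v\cdot s(q_n(w))$ with $v:=w\cdot s(q_n(w))^{-1}\in\ker q_n=\mathfrak{W}_n$, and observing that both $v$ and $w$ lie in $IA_n=J^2$ (since $\mathfrak{W}_n\subseteq\mathfrak{K}_n\subseteq P\Sigma_n\subseteq IA_n$ and $w$ has Johnson degree $\geq 2$), one gets $s(q_n(w))=v^{-1}w\in J^2$, so $q_n(w)\in\mathbb{Z}^{n-1}\cap J^2IA_n=1$ by Lemma~\ref{5.2.2}; since $s$ is injective this forces $q_n(w)=1$, i.e. $w=v\in\mathfrak{W}_n$, and hence $w\in A_m$. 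Thus $B_m=A_m=A_mC_m$ in every degree above the first, and the hypothesis of the general theorem holds.

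Finally, the general theorem then produces a short exact sequence of Lie algebras
\[0\to\mathrm{gr}_\ast^J(\mathfrak{W}_n)\to\mathrm{gr}_\ast^J(\mathfrak{K}_n)\to\mathrm{gr}_\ast^J(\mathbb{Z}^{n-1})\to 0\]
which splits as a sequence of abelian groups. Because $C_1=\mathbb{Z}^{n-1}$ while $C_m=1$ for $m\geq 2$, the associated graded $\mathrm{gr}_\ast^J(\mathbb{Z}^{n-1})$ is just $\mathbb{Z}^{n-1}$ concentrated in degree one; substituting this into the sequence gives the displayed exact sequence and, by the splitting, the isomorphism $\mathrm{gr}_\ast^J(\mathfrak{K}_n)\cong\mathbb{Z}^{n-1}\oplus\mathrm{gr}_\ast^J(\mathfrak{W}_n)$. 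I expect the only real obstacle to be the verification that $B_m=A_mC_m$ for all $m$ — concretely, that the Johnson-filtration pieces of $\mathfrak{K}_n$ above level one are contained in $\mathfrak{W}_n$ — since this is the one place where one must simultaneously use Lemma~\ref{5.2.2} and the ambient inclusion $\mathfrak{K}_n\subseteq IA_n$; a minor auxiliary point is confirming that the filtration induced on $\mathbb{Z}^{n-1}$ from $\mathfrak{K}_n$ agrees with the one induced directly from $IA_n$, which again reduces to Lemma~\ref{5.2.2}.
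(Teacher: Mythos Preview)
Your overall plan matches the paper's: specialize the general Falk--Randell theorem to the split sequence $1\to\mathfrak{W}_n\to\mathfrak{K}_n\to\mathbb{Z}^{n-1}\to1$ with the Johnson filtration, verifying $B_m=A_mC_m$. The difficulty is exactly where you locate it---showing that every $w\in\mathfrak{K}_n$ of Johnson level $\geq 2$ already lies in $\mathfrak{W}_n$---but your argument for this step has a genuine gap.

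You write $w=v\cdot s(q_n(w))$ with $v\in\mathfrak{W}_n$ and then claim $s(q_n(w))=v^{-1}w\in J^2$ because ``both $v$ and $w$ lie in $IA_n=J^2$.'' Here you are conflating two different objects: $J^2\mathrm{Aut}(F(X_n))=IA_n$, which is the \emph{bottom} of the Johnson filtration, and the quantity ``$J^2IA_n$'' appearing in Lemma~\ref{5.2.2}, which is the \emph{next} level down (nontrivial elements of $\mathbb{Z}^{n-1}$ visibly lie in $IA_n$, so the lemma would be false under your reading). You know $w$ lies in that deeper level by hypothesis, but you have no information placing $v$ there; hence you cannot conclude that $v^{-1}w$ does, and Lemma~\ref{5.2.2} does not apply.

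The paper closes this gap by a direct computation rather than by invoking Lemma~\ref{5.2.2} alone: for $a\in\mathfrak{W}_n$ and nontrivial $c\in\mathbb{Z}^{n-1}$ it evaluates the degree-one Johnson homomorphism $\widetilde{J}_{1,\mathfrak{K}_n}(ac)$ and observes that the contribution of $c$, namely $\sum_l d_l\,\widetilde{\chi}_{l,n}$, cannot be cancelled by the contribution of $a$ (whose $\widetilde{\chi}_{\ast,n}$-part vanishes since $a\in\ker q_n$). Thus $ac$ is not in the second Johnson level, which is exactly the contrapositive of what you need. In effect one must show that the images of $\mathfrak{W}_n$ and of $\mathbb{Z}^{n-1}$ in $\mathrm{gr}_1^J$ are complementary, and this requires looking at the Johnson map, not just at group membership in $IA_n$.
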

\begin{proof}
Since the group $\mathfrak{K}_n$ is generated by the $w^{-1}\chi_{i,j}w$'s, where $\chi_{i,j}\in P_n$, $w$ is a (possibly empty) product of $\chi_{i',j'}'s$. Thus every element $a\in\mathfrak{K}_n$ can be written as 
\[a=\chi_{i_1,j_1}^{e_1}\chi_{i_2,j_2}^{e^2}\cdots\chi_{i_r,j_r}^{e_r}\cdot u\] for $e_k\in\mathbb{Z}$, $k=1,\ldots, r$ and some $u\in\Gamma^2\mathfrak{K}_n$.
Then 
\begin{align*}
q_n(a)&=\sum_{k=1}^{r}e_k\cdot q_n(\chi_{i_k,j_k})\\
&=\sum_{1\leq k\leq r, j_k=n}e_k\cdot \chi_{i_k,j_k}.
\end{align*}
Suppose $a\in\mathfrak{W}_n$ and $c=\displaystyle\sum_{l=1}^{n-1}d_i\chi_{l,n}$ is a non-trivial element in $\mathbb{Z}^{n-1}$. Then
\[ac=\chi_{i_1,j_1}^{e_1}\chi_{i_2,j_2}^{e^2}\cdots\chi_{i_r,j_r}^{e_r}\cdot u\cdot\chi_{1,n}^{d_1}\chi_{2,n}^{d_2}\cdots\chi_{n-1,n}^{d_{n-1}}.\] 
Since $u\in\Gamma^2\mathfrak{K}_n\subset J^2\mathfrak{K}_n$,
\[ac\dot J^2\mathfrak{K}_n=\chi_{i_1,j_1}^{e_1}\chi_{i_2,j_2}^{e^2}\cdots\chi_{i_r,j_r}^{e_r}\chi_{1,n}^{d_1}\chi_{2,n}^{d_2}\cdots\chi_{n-1,n}^{d_{n-1}}J^2\mathfrak{K}_n.\]
A straightforward computation gives 
\[\widetilde{J}_{1,\mathfrak{K}_n}(ac\cdot J^2\mathfrak{K}_n)=\sum_{1\leq k\leq r, j_k\neq n}e_k\cdot\widetilde{\chi}_{i_k,j_k}+\sum_{l=1}^{n-1}d_l\cdot\widetilde{\chi}_{l,n}\neq 0\]
since not all $d_l$'s are zeros. That is, for every $a\in\mathfrak{W}_n$ and $c\neq 1$, $ac\notin J^2\mathfrak{K}_n$. By Lemma \ref{5.2.2} and Theorem \ref{1.1.15}, there is a short exact sequence of Johnson Lie algebras
\[0\to\mathrm{gr}_\ast^J(\mathfrak{W}_n)\to\mathrm{gr}_\ast^J(\mathfrak{K}_n)\to\mathbb{Z}^{n-1}\to 0\] which splits as a sequence as abelian groups and therefore,
\[\mathrm{gr}_\ast^J(\mathfrak{K}_n)\cong\mathbb{Z}^{n-1}\oplus\mathrm{gr}_\ast^J(\mathfrak{W}_n).\] 

\section*{Acknowledgement} 
Mohamed Elhamdadi was partially supported by Simons Foundation collaboration grant 712462.

\subsection*{In Memoriam} Frederick Cohen passed away on january 16, 2022 at the age of 76 and we all initiated the present work and Fred fully contributed to the whole paper. 

\end{proof}
  
\end{document}